\documentclass[reqno]{amsart}

\usepackage{amssymb,mathrsfs,bm}
\usepackage{graphicx}
\usepackage{enumitem}
\usepackage{xcolor}
\usepackage{microtype}
\usepackage[
  colorlinks=true,
  citecolor=blue,
  linkcolor=blue,
  urlcolor=black
]{hyperref}
\hypersetup{
  pdftitle={Quantitative asymptotic error analysis and Richardson extrapolation for exponential Euler approximations of stochastic wave equations},
  pdfauthor={Jianbo Cui, Diancong Jin, Huagui Liu, and Derui Sheng}
}

\newif\ifshowrevisions
\showrevisionsfalse
\ifshowrevisions\else
  \colorlet{green}{black}
\fi

\issueinfo{}{}{}{}
\PII{}
\copyrightinfo{}{}
\date{}

\theoremstyle{definition}
\newtheorem{definition}{Definition}[section]
\newtheorem{assumption}{Assumption}
\newtheorem{example}[definition]{Example}
\theoremstyle{plain}
\newtheorem{lemma}[definition]{Lemma}

\newtheorem{corollary}[definition]{Corollary}
\newtheorem{theorem}[definition]{Theorem}
\newtheorem{proposition}[definition]{Proposition}

\theoremstyle{remark}
\newtheorem{remark}[definition]{Remark}
\newcommand{\cH}{\mathcal{H}}
\newcommand{\ud}{\mathrm d}

\newcommand{\ttau}{\underline{t}}
\newcommand{\R}{\mathbb{R}}
\newcommand{\E}{\mathbb{E}}

\numberwithin{equation}{section}
\allowdisplaybreaks[4]
\begin{document}
\title[Weak error expansions for stochastic wave equations]{Quantitative asymptotic error analysis
and Richardson extrapolation for exponential Euler
approximations of stochastic wave equations}
\subjclass[2020]{60H15, 60H35, 65C30, 60F05}
\author[J. Cui]{Jianbo Cui}
\address{Department of Applied Mathematics, The Hong Kong Polytechnic
University, Hung Hom, Kowloon, Hong Kong}
\email{jianbo.cui@polyu.edu.hk}
\author[D. Jin]{Diancong Jin}
\address{Hubei Key Laboratory of Engineering Modeling and Scientific Computing, Huazhong University of Science and
Technology, Wuhan 430074, China}
\email{jindc@hust.edu.cn}
\author[H. Liu]{Huagui Liu}
\address{Department of Applied Mathematics, The Hong Kong Polytechnic
University, Hung Hom, Kowloon, Hong Kong}
\email{huaguliu@polyu.edu.hk}
\author[D. Sheng]{Derui Sheng}
\address{Department of Applied Mathematics, The Hong Kong Polytechnic
University, Hung Hom, Kowloon, Hong Kong}
\email{sdr@lsec.cc.ac.cn (Corresponding author)}
\thanks{This work is supported by MOST National Key R\&D Program No. 2024FA1015900, the Hong Kong Research Grant Council GRF grant 15302823, NSFC/RGC Joint Research Scheme N$\_$PolyU5141/24, NSFC grant 12301526, internal funds (P0041274, P0045336) from Hong Kong Polytechnic University, and the
CAS AMSS-PolyU Joint Laboratory of Applied Mathematics.
}
\keywords{Stochastic wave equation, exponential Euler method, asymptotic error distribution, weak convergence rate, Kolmogorov equation}
\begin{abstract}
We establish quantitative weak convergence for the joint terminal law
of the solution to a stochastic wave equation and the normalized error
of its exponential Euler approximation.
For multiplicative trace-class noise and spatial regularity
$\alpha\in(1,2]$, the normalized error converges to its limiting
distribution at rate $O(m^{-(\alpha-1)})$,
where $m$ is the number of time steps.
This estimate identifies the leading weak bias and yields a first-order
weak-error expansion with remainder $O(m^{-\alpha})$.
Moreover, by canceling the leading weak-error term, we construct
a Richardson extrapolation scheme with weak convergence order $\alpha$
for $\alpha\in(1,2]$ and show that this rate is sharp.
To overcome the lack of parabolic smoothing and control the remainders
amplified by normalization, we combine dimension-uniform regularity
estimates for the joint Kolmogorov equation with a conditional weak
expansion and backward transport of coefficient directions.
\end{abstract}
\maketitle
\section{Introduction}
Stochastic wave equations provide a fundamental mathematical framework for describing wave propagation in media subject to random perturbations, where the stochastic terms can account for unpredictable environmental conditions, incomplete system information, and uncertainties caused by measurement noise \cite{BPTB02}.  Such equations have been used in a variety of applications, including the modeling of the primary current density vector field in the grey matter of the human brain \cite{Galka08}, the propagation of shock waves inside the sun \cite{DKMNX06}, and semiconductor laser linewidth theory \cite{Wenzel21}. 
Within the framework of stochastic evolution equations, the stochastic wave equation under consideration takes the following abstract form:
\begin{equation}\label{eq:SWE-intro}
\ud X_t=(AX_t+F(X_t))\ud t+\Sigma(X_t)\ud W_t,\quad t\in(0,T],
\end{equation}
{subject to the initial condition $X_0=x_0$,}
where $T>0$, $X_t=(u(t),\partial_tu(t))^\top$, and
$$
A=
\begin{pmatrix}
0&I\\
-\Lambda&0
\end{pmatrix},
\quad
F(X_t)=
\begin{pmatrix}
0\\
f(u(t))
\end{pmatrix},
\quad
\Sigma(X_t)=
\begin{pmatrix}
0\\
\sigma(u(t))
\end{pmatrix}.
$$
Here, $-\Lambda$ denotes the Dirichlet Laplacian on the
one-dimensional spatial domain $\mathcal O=[0,1]$ with homogeneous
Dirichlet boundary conditions, $f,\sigma:\mathbb R\to\mathbb R$, and
$W$ is a $Q$-Wiener process on $H=L^2(\mathcal O)$.

Since explicit solutions of \eqref{eq:SWE-intro} are generally unavailable,
the development of reliable numerical schemes and the associated error
analysis are essential for both theoretical understanding and practical
computation (see, e.g., \cite{KLS10,CQ16,CHS26Book}).
Existing studies include higher-order approximations
(see, e.g., \cite{FPP24}), structure-preserving schemes (see, e.g., \cite{CHS25}), and so on.
Related work also addresses weak approximation \cite{HM19,BHS18}
and the numerical approximation of invariant measures \cite{LBG25}.
A widely used time discretization for \eqref{eq:SWE-intro} is the
exponential Euler method, also known as the stochastic trigonometric
method \cite{CLS13,WX15,ACLW16}.
For $m\in\mathbb N_+$, setting $\tau=T/m$ and $t_n=n\tau$,
the exponential Euler approximation $\{X_{t_n}^m\}_{n=0}^m$ for \eqref{eq:SWE-intro} is defined recursively by
\begin{align}\label{eq:EEM-intro}
X^m_{t_{n+1}}
=
S_{\tau}(X^m_{t_n}
+\tau F(X^m_{t_n})
+\Sigma(X^m_{t_n})\Delta W_n),
\quad n=0,1,\ldots,m-1,
\end{align}
with the initial value $X_{0}^m={x_0}$,
where $\Delta W_n=W_{t_{n+1}}-W_{t_n}$ and $\{S_t=e^{tA}\}_{t\in\R}$ is the group generated by $A$. 
By integrating the linear wave dynamics exactly, the exponential
Euler method \eqref{eq:EEM-intro} captures the underlying linear
oscillations and avoids the CFL-type time-step restrictions
associated with spatial discretization \cite{CQ16}.
Strong convergence rates for this method have been established
in \cite{ACLW16,WX15}, while temporal weak convergence rates
have been analyzed in \cite{CJL24}.
In particular,
under Assumption~\ref{asp:F} and for deterministic initial data
$x_0\in\cH^1=H^1\times H$, the method \eqref{eq:EEM-intro} achieves first-order strong convergence
(see Lemma~\ref{lem:limit-basic}), namely,
\begin{equation}\label{eq:SO}
\E\left[\|X_T-X_T^m\|_{\cH}^p\right]
\le C(T,p,x_0)m^{-p},\quad p\ge2,
\end{equation}
where $\cH=H\times H^{-1}$. 
This estimate quantifies the strong accuracy of the numerical
approximation and provides a theoretical basis for the design
and complexity analysis of Monte Carlo and multilevel Monte Carlo
simulations. Nevertheless, \eqref{eq:SO} neither determines whether the first-order
rate is sharp nor describes the leading random fluctuations at that scale. 
To capture finer information, we study the asymptotic error distribution of \eqref{eq:EEM-intro},
namely, the limiting distribution of the normalized error
$$Y_T^m:=m(X_T^m-X_T)\quad \text{as }m\to\infty.$$

The asymptotic error distribution of stochastic numerical methods plays a role analogous to a central limit theorem for the numerical error. Fruitful results have been obtained in this direction for various stochastic systems. For stochastic ordinary differential equations, \cite{JP98} proved the convergence in distribution of the normalized error of the Euler--Maruyama method under Lipschitz continuous coefficients and identified the corresponding limiting error process. This result was later extended in \cite{PQM2020} to the case of locally Lipschitz continuous coefficients. Related asymptotic error distribution results have also been established for other classes of stochastic systems. See, e.g., \cite{FU2023,NS2023} for stochastic Volterra equations, \cite{HLN2016,NN2007,ZHL2023,UK25} for equations driven by fractional noise, and \cite{LWW23} for McKean--Vlasov stochastic differential equations. To the best of our knowledge, closely related results in the infinite-dimensional setting include \cite{HJWY24} and the preprint \cite{HJW25}, which investigated exponential integrators for stochastic heat equations driven by additive and multiplicative noises, respectively.
These works focus on establishing stable convergence of the
normalized error process and identifying its limit.
Beyond these qualitative results, we seek to quantify the rate
of convergence to the asymptotic error distribution.
Such quantitative estimates are crucial for controlling the remainder
in the weak-error expansion and establishing the convergence order
of the extrapolated approximation.

Establishing a weak convergence rate for the joint distribution
of $X_T$ and $Y_T^m$ presents several challenges.
First, Jacod's theory on the convergence in distribution of
conditionally Gaussian martingales \cite{Jacod97} is a classical and useful
tool for deriving asymptotic error distributions.
However, it only establishes convergence to the asymptotic error distribution and does not provide a quantitative convergence rate.
Second, multiplying the numerical error $X^m-X$ by $m$ amplifies
the local remainders arising from time discretization.
Controlling these amplified remainders is particularly delicate
for the stochastic wave equation, since the operator $A$ generates
a strongly continuous group rather than an analytic semigroup.
Thus, no parabolic smoothing is available to compensate for the low
regularity of the noise and the unboundedness of $A$.
Finally, the displacement and velocity components $u(t)$ and $\partial_t u(t)$ in  the stochastic wave equation have different spatial regularities,
requiring mixed estimates on the product phase space.

Our main result, Theorem \ref{tho:mr}, establishes the following
quantitative estimate for the joint terminal distribution: 
\begin{equation}\label{eq:mr-intro}
\left|\E[\varphi(X_T,Y_T^m)]
-\E[\varphi(X_T,Y_T)]\right|
\le C m^{-(\alpha-1)}
\end{equation}
for a suitable
class of smooth test functions $\varphi$,
where $\alpha\in(1,2]$ is the spatial regularity parameter specified by Assumption \ref{asp:alp}. 
The limiting error process $Y$ in \eqref{eq:mr-intro} is governed by the following linear equation:\begin{equation}\label{eq:ASE}
\ud Y_t
=
\{AY_t+\Phi_F(X_t,Y_t)\}\ud t
+\Phi_\Sigma(X_t,Y_t)\ud W_t
+\frac{T}{\sqrt{12}}\phi_\Sigma(X_t)\ud \widetilde W_t,
\quad t\in(0,T],
\end{equation}
with the initial value $Y_0=0$, where $\widetilde W=\{\widetilde W_t\}_{t\in[0,T]}$ is an independent copy of $W$. Here, for $G\in\{F,\Sigma\}$, the coefficients are defined by
  \begin{equation}\label{PhiG}
\phi_G(z)=AG(z)-DG(z)Az,\quad
\Phi_G(z,y)=DG(z)y+\frac{T}{2}\phi_G(z),~
z\in\cH^1, y\in\cH.
\end{equation}
The additional Wiener process $\widetilde W$ in \eqref{eq:ASE} originates from the centered fluctuation of the diffusion remainder within each time step. 

To prove Theorem~\ref{tho:mr}, we employ a Kolmogorov equation
approach \cite{TT90,HM19}, motivated by the weak error analysis
of numerical approximations for SPDEs. Using the Markov property of
the limiting joint process $(X,Y)$ and It\^o's formula, we first derive
a weak error representation for $(X_T,Y_T^m)$ that decomposes the error
into a remainder contribution $\mathcal E_{\rm rem}$ and a centered
contribution $\mathcal E_{\rm cent}$; see \eqref{eq:two-blocks}. For the
numerical solution $X_t^m$ itself, the weak error
representation involves derivatives up to order two of the solution to
the Kolmogorov equation associated with $X$. For the normalized
error $Y_t^m$, however, the discretization errors amplified by
normalization require a further weak expansion, which involves mixed
derivatives up to total order four of the solution to the
Kolmogorov equation \eqref{eq:Kolu} associated with $(X,Y)$. In Proposition~\ref{prop:varphiY}, we establish the
required mixed derivative estimates for the corresponding
finite-dimensional equation, with constants independent of the spectral
Galerkin dimension. For the remainder
contribution $\mathcal E_{\rm rem}$, the key is to exploit conditional
cancellation in the stochastic remainder terms, since direct moment
estimates yield only the rate $m^{-\min\{\alpha-1,\frac12\}}$, which is
insufficient for \eqref{eq:mr-intro} when $\alpha\in(3/2,2]$; see Remark \ref{rem:Erem} and
Proposition~\ref{prop:Erem}. To estimate the centered contribution
$\mathcal E_{\rm cent}$, we exploit the cancellation property \eqref{eq:centered-identities} of the
within-step weight $\vartheta_m(t)$ and its quadratic counterpart.
This reduces the control of $\mathcal E_{\rm cent}$ to weak
temporal increment estimates for coefficient-weighted derivatives of
the solution to the Kolmogorov equation \eqref{eq:Kolu}; see
Proposition~\ref{prop:one-step}. A direct treatment of these increments
would introduce an additional application of the unbounded operator
$A$ to coefficient directions with insufficient spatial regularity
(see Remark~\ref{rem:Aterm}). We avoid this loss of regularity by
transporting these directions backward along the wave group, so that
the operator contributions generated by the transport cancel the
corresponding terms arising from differentiation of the 
Kolmogorov equation. This makes it possible to establish the required
increment estimates without imposing $H^2$ spatial regularity on the
solution.

Beyond the quantitative convergence result in Theorem~\ref{tho:mr}, our main contributions are threefold:
\begin{enumerate}[label=\textup{(\arabic*)},leftmargin=*,itemsep=0.3ex,topsep=0.3ex]
\item Using the quantitative convergence estimate~\eqref{eq:mr-intro}, we derive a first-order weak-error expansion for the exponential Euler method~\eqref{eq:EEM-intro}. More precisely, for every $\psi\in\mathcal C_b^4(\cH,\mathbb R)$, we obtain
\begin{equation}\label{eq:psiXm}
	\E[\psi(X_T^m)]-\E[\psi(X_T)]
	=
	\frac1m\E[D\psi(X_T)Y_T]+O(m^{-\alpha}).
\end{equation}
In particular, we identify the leading-order coefficient of the weak error explicitly in terms of the limiting error process $Y_T$.
\item  Motivated by the temporal lower-bound question raised in
\cite{CJL24}, we give an admissible example in the present trace-class
setting for which
\[
\lim_{m\to\infty}m\bigl|\E[\psi(X_T^m)]-\E[\psi(X_T)]\bigr|>0.
\]
Together with the known upper bound \eqref{eq:SO}, this reveals that the
first-order weak rate cannot in general be improved for this method;
the Lipschitz continuity of $\psi$ also yields sharpness of the strong
rate; see Remark~\ref{rem:opt}.
\item Using the weak-error expansion \eqref{eq:psiXm}, we construct an
$\alpha$th-order weak approximation for \eqref{eq:SWE-intro} by Richardson extrapolation, namely, 
\[
2\E[\psi(X_T^{2m})]-\E[\psi(X_T^m)]
=\E[\psi(X_T)]+O(m^{-\alpha}).
\]
This quantitative rate cannot be inferred from the stable convergence
of $Y_T^m$ with respect to the $\sigma$-algebra generated by $X_T$ (see Definition \ref{def:stable}),
or equivalently from the joint weak convergence of $(X_T,Y_T^m)$.
Indeed, stable convergence alone yields only an $o(m^{-1})$
extrapolation error and does not specify any higher algebraic
convergence rate; see Remark~\ref{rem:stable-Richardson}.
Moreover, for every $\alpha\in(1,2]$, we construct examples showing
that the order $\alpha$ of the Richardson extrapolation is sharp;
see Proposition~\ref{prop:convergence-rates-sharpness} for $\alpha\in(1,2)$ and Remark \ref{rem:Rich} for $\alpha=2$.
This highlights an infinite-dimensional regularity barrier:
whereas second-order weak extrapolation is natural in finite-dimensional problems, the attainable order here is
capped by the spatial regularity $\alpha$ of the solutions.
\end{enumerate}

The rest of this paper is organized as follows. Section \ref{S:preliminaries} introduces the functional setting and assumptions, and states the main result, its application to Richardson extrapolation, and the sharpness of the convergence rates. In Section~\ref{S:mainresult}, we reduce the infinite-dimensional problem
to its spectral Galerkin approximations.
The proof of Theorem~\ref{tho:mr} is then reduced to a weak error estimate that is uniform in
the Galerkin dimension. Section \ref{S:proof} proves this dimension-uniform weak error estimate. Appendix \ref{S:auxiliary-proofs} contains the auxiliary regularity estimates for the Kolmogorov equation, while Appendix \ref{app:sharpness} gives the proof of Proposition \ref{prop:convergence-rates-sharpness}.
\section{Preliminaries and main results}
\label{S:preliminaries}
Let $\Lambda=-\Delta$ be the negative Laplacian operator endowed with
Dirichlet boundary conditions on $\mathcal O:=[0,1]$.  For $r\ge0$, we
define
\[
H^r:=\operatorname{Dom}(\Lambda^{r/2}),\qquad
\|z\|_{H^r}:=\|\Lambda^{r/2}z\|_H,
\]
and, for $r>0$, let $H^{-r}$ be the dual space of $H^r$ with respect to
the pivot space $H:=H^0=L^2(\mathcal O)$. For $z=(z_1,z_2)\in\cH^r:=H^r\times H^{r-1}$ with $r\in\mathbb R$, we use the product norm
\[
\|z\|_{\cH^r}^2=\|z_1\|_{H^r}^2+\|z_2\|_{H^{r-1}}^2.
\]
When viewed as an operator on $\cH:=\cH^0=H\times H^{-1}$, the wave operator is
$Az=(z_2,-\Lambda z_1)$ with domain
$\operatorname{Dom}(A)=\cH^1=H^1\times H$.
Given Hilbert spaces
$V_1,\ldots,V_n$, the space
$\mathscr L(V_1\times\cdots\times V_n,\R)$ denotes the space of bounded
$n$-linear forms on $\prod_{i=1}^nV_i$, endowed with the operator norm.
Furthermore, $\mathscr L_2(V_1,V_2)$ denotes the space of
Hilbert--Schmidt operators $A_1:V_1\to V_2$, endowed with the Hilbert--Schmidt norm
$
\|A_1\|_{\mathscr L_2(V_1,V_2)}
:=\left(\sum_{k=1}^\infty\|A_1v_k\|_{V_2}^2\right)^{1/2},
$
where $\{v_k\}_{k\ge1}$ is any complete orthonormal basis of $V_1$. Hereafter, for $h=(h_1,h_2)\in\cH$, let  $[h]_1:=h_1$ and $[h]_2:=h_2$ denote its first and second components, respectively.

 Recall that the group  $\{S_t=e^{tA}\}_{t\in\R}$ generated by $A$ is given by
\begin{equation}\label{eq:St}
S_t = \begin{pmatrix} 
\cos(\Lambda^{1/2}t) & \Lambda^{-1/2}\sin(\Lambda^{1/2}t) \\ 
-\Lambda^{1/2}\sin(\Lambda^{1/2}t) & \cos(\Lambda^{1/2}t) 
\end{pmatrix}.
\end{equation}
We have the following fundamental properties (see, e.g., \cite{ACLW16}):
\begin{gather}\label{eq:Abound}
\|Az\|_{\cH^{r-1}}
= \|z\|_{\cH^{r}},\quad
\|S_t z\|_{\cH^r}=\|z\|_{\cH^r},\quad z = (z_1, z_2)^\top \in \cH^{r},\quad t\in\R.
\end{gather}
For any $\beta \in [0,1]$, there exists $C:=C(\beta,T)>0$ such that
for any $t\in[0,T]$,
\begin{gather}\label{eq:sinop}
\max\{\|\Lambda^{-\beta/2}\sin(t\Lambda^{1/2})\|_{\mathscr{L}(H)},
\|(\cos(t\Lambda^{1/2}) - I)\Lambda^{-\beta/2}\|_{\mathscr{L}(H)} \}
\le C t^{\beta}.
\end{gather}
Consequently, for every $r\in\R$ and $\beta\in[0,1]$,
\begin{align}\label{eq:St-I}
\|(S_t-I)z\|_{\cH^r}
\le C t^\beta\|z\|_{\cH^{r+\beta}},
\qquad z\in\cH^{r+\beta},\quad t\in[0,T].
\end{align}

Let $Q:H\to H$ be a nonnegative, self-adjoint, trace-class
operator.  The driving process $W$ in \eqref{eq:SWE-intro} is a
$Q$-Wiener process with the Karhunen--Lo\`eve expansion
$W_t = \sum_{i=1}^\infty Q^{\frac{1}{2}}e_i \beta_i(t)$, where
$\{e_i\}_{i \in \mathbb{N}}$ is a complete orthonormal eigenbasis of $Q$ in
$H=L^2(\mathcal{O})$, and $\{\beta_i\}_{i \in \mathbb{N}}$ are
independent standard one-dimensional Brownian motions.
We denote by $H_Q:=Q^{1/2}(H)$ the reproducing kernel Hilbert space
associated with $W$, endowed with the inner product
$
\langle u,v\rangle_{H_Q}
:=\langle Q^{-1/2}u,Q^{-1/2}v\rangle_H$ for $u,v\in H_Q$,
where $Q^{-1/2}$ is the Moore--Penrose pseudoinverse of
$Q^{1/2}$.
For simplicity, we assume that the initial value
${x_0}\in\cH^1$ is deterministic.
Given two functions $f,\sigma:\R\to\R$, we formally define
the nonlinearities $F$ and $\Sigma$ in \eqref{eq:SWE-intro} by
\begin{align*}
[F(z)](\zeta)
=
\begin{pmatrix}
0\\
f(z_1(\zeta))
\end{pmatrix},
\qquad
[\Sigma(z)q](\zeta)
=
\begin{pmatrix}
0\\
\sigma(z_1(\zeta))q(\zeta)
\end{pmatrix},
\end{align*}
for $z=(z_1,z_2)\in\cH$, $\zeta\in\mathcal O$, and $q\in H_Q$. We impose the following assumption to ensure that $F$ and $\Sigma$ are well defined as mappings
from $\cH$ into $\cH^1$ and
$\mathscr L_2(H_Q,\cH^1)$, respectively.
\begin{assumption}\label{asp:F}
(1)
$f:\R\to\R$ and $\sigma:\R\to\R$ are globally Lipschitz continuous.\\
(2)
The noise covariance satisfies
\(
\sum_{i=1}^{\infty}
\|Q^{1/2}e_i\|_{L^\infty(\mathcal O)}^2<\infty.
\)
\end{assumption}
In fact,
under Assumption~\ref{asp:F}, the mappings
$F:\cH\to\cH^1$ and
$\Sigma:\cH\to\mathscr L_2(H_Q,\cH^1)$
are well defined and globally Lipschitz continuous.
More precisely, there exists $C>0$ such that, for all
$z,\xi\in\cH$,
\begin{align}\label{eq:Lip}
\|F(z)-F(\xi)\|_{\cH^1}
+\|\Sigma(z)-\Sigma(\xi)\|_{\mathscr L_2(H_Q,\cH^1)}
\le C\|z_1-\xi_1\|_H
\le C\|z-\xi\|_{\cH}.
\end{align}
In particular, there exists $C>0$ such that
\begin{align}\label{eq:lin}
\|F(z)\|_{\cH^1}
+\|\Sigma(z)\|_{\mathscr L_2(H_Q,\cH^1)}
\le C(1+\|z\|_{\cH}),
\qquad z\in\cH.
\end{align}
As a consequence of \eqref{eq:Lip} and \eqref{eq:lin}, Assumption~\ref{asp:F} guarantees the existence and
uniqueness of a mild solution to \eqref{eq:SWE-intro};
see, e.g., \cite{ACLW16,WX15}.

Recall that $\tau=T/m$, with $m\in\mathbb N_+$, denotes
the uniform time step size. A continuous version of the exponential Euler method \eqref{eq:EEM-intro} reads
\begin{align}\label{eq:EEM}
\ud X^m_t=(AX^m_t+S_{t-\ttau}F(X^m_{\ttau}))\ud t+S_{t-\ttau}\Sigma(X^m_{\ttau})\ud W_t, \quad t\in(0,T]
\end{align}
with the initial value $X_0^m={x_0}$,
where $\ttau=\max\{t_i:t_i\le t\}$.
It is proved in, e.g., \cite{WX15} that, under Assumption
\ref{asp:F} and for ${x_0}\in\cH^1$, the exponential Euler method
\eqref{eq:EEM} is first-order strongly convergent.  Consequently, the
normalized error $Y^m_t:=m(X^m_t-X_t)$ is bounded in
$L^p(\Omega,\cH)$, uniformly in $t\in[0,T]$, as shown in the following lemma.
\begin{lemma}\label{lem:limit-basic}
Let Assumption \ref{asp:F} hold and let
${x_0}\in\cH^1$ be deterministic.
Then, for every $p\ge2$, there exists
$C>0$, depending on $p,T$ and the coefficients $f$ and $\sigma$
in Assumption
\ref{asp:F}, but independent of $m$ and ${x_0}$, such that, for all
$s,t\in[0,T]$,
\begin{gather}
\E[\|X\|_{L^\infty(0,T;\cH^1)}^p]
+\E[\|X^m\|_{L^\infty(0,T;\cH^1)}^p]
\le C(1+\|{x_0}\|_{\cH^1}^p),
\label{eq:LpX}\\
\E[\|u(t)-u(s)\|_H^p]
\le C(1+\|{x_0}\|_{\cH^1}^p)|t-s|^p,
\label{Holderq}\\
\E[\|Y^m\|_{L^\infty(0,T;\cH)}^{p}]
\le C(1+\|{x_0}\|_{\cH^1}^p),
\label{Strongorder}
\end{gather}
where $u(t)=[X_t]_1$ is the displacement component of $X_t$.
\end{lemma}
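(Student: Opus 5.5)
We prove the three estimates in the order stated, using the mild formulations
\[
X_t=S_tx_0+\int_0^tS_{t-s}F(X_s)\,\ud s+\int_0^tS_{t-s}\Sigma(X_s)\,\ud W_s,
\qquad
X^m_t=S_tx_0+\int_0^tS_{t-\underline{s}}F(X^m_{\underline{s}})\,\ud s+\int_0^tS_{t-\underline{s}}\Sigma(X^m_{\underline{s}})\,\ud W_s,
\]
where the second identity follows from \eqref{eq:EEM} because $S_{t-s}S_{s-\underline s}=S_{t-\underline s}$.

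\emph{Step 1: the bound \eqref{eq:LpX}.} We work in $\cH^1$. By \eqref{eq:Abound}, $S_t$ is an isometry on $\cH^1$, so $\|S_tx_0\|_{\cH^1}=\|x_0\|_{\cH^1}$. By \eqref{eq:lin}, $\|F(z)\|_{\cH^1}\le C(1+\|z\|_{\cH})\le C(1+\|z\|_{\cH^1})$, and the same holds for $\Sigma$ in $\mathscr L_2(H_Q,\cH^1)$. The supremum of the stochastic convolution needs care, since it is not a martingale. We write $S_{t-s}=S_tS_{-s}$ and use the group property together with isometry:
\[
\Big\|\int_0^tS_{t-s}\Sigma\,\ud W_s\Big\|_{\cH^1}=\Big\|\int_0^tS_{-s}\Sigma\,\ud W_s\Big\|_{\cH^1}.
\]
The right-hand side is a genuine martingale, so Burkholder--Davis--Gundy applies. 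The numerical scheme is handled the same way, with $S_{-\underline s}$ in place of $S_{-s}$. Gronwall's lemma applied to $\E\sup_{r\le t}\|\cdot\|_{\cH^1}^p$ then gives \eqref{eq:LpX}, with a constant depending only on $p$, $T$ and the Lipschitz constants.

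\emph{Step 2: the H\"older bound \eqref{Holderq}.} We take $s<t$ and write
\[
X_t-X_s=(S_{t-s}-I)X_s+\int_s^tS_{t-r}F(X_r)\,\ud r+\int_s^tS_{t-r}\Sigma(X_r)\,\ud W_r.
\]
The first component of $(S_{t-s}-I)X_s$ is $(\cos((t-s)\Lambda^{1/2})-I)[X_s]_1+\Lambda^{-1/2}\sin((t-s)\Lambda^{1/2})[X_s]_2$. Applying \eqref{eq:sinop} with $\beta=1$ bounds its $H$-norm by $C|t-s|\,\|X_s\|_{\cH^1}$. For the two integrals, $[S_{t-r}G]_1=\Lambda^{-1/2}\sin((t-r)\Lambda^{1/2})[G]_2$ because $[G]_1=0$, which again gives a factor $C|t-r|\le C|t-s|$. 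Hence the drift integral is $O(|t-s|^2)$. For the stochastic integral, BDG gives an $L^p$ bound of order $\big(\int_s^t|t-r|^2\,\ud r\big)^{1/2}\sim|t-s|^{3/2}$. Both are at most $C|t-s|$ on $[0,T]$. Together with Step 1 this yields \eqref{Holderq}.

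\emph{Step 3: the strong error \eqref{Strongorder}, the main step.} We set $e_t=X^m_t-X_t$ and decompose
\[
e_t=\int_0^t\big[S_{t-\underline s}F(X^m_{\underline s})-S_{t-s}F(X_s)\big]\ud s+(\text{analogous stochastic term}).
\]
Each integrand splits into three pieces:
\begin{enumerate}[label=\textup{(\roman*)},leftmargin=*]
\item $S_{t-\underline s}\big(F(X^m_{\underline s})-F(X_{\underline s})\big)$, bounded in $\cH$ by \eqref{eq:Lip} by $C\|e_{\underline s}\|_{\cH}$;
\item $S_{t-\underline s}\big(F(X_{\underline s})-F(X_s)\big)$, bounded by $C\|u(\underline s)-u(s)\|_H$, which is $O(\tau)$ in $L^p$ by \eqref{Holderq};
\item $S_{t-s}(S_{s-\underline s}-I)F(X_s)$, bounded in $\cH$ via \eqref{eq:St-I} with $r=0,\ \beta=1$ by $C\tau\|F(X_s)\|_{\cH^1}\le C\tau(1+\|X_s\|_{\cH})$.
\end{enumerate}
The same splitting applies to the diffusion terms, with $\mathscr L_2(H_Q,\cH)$ norms and BDG, after factoring out $S_t$ as in Step 1 to control the supremum. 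This is the key point: \eqref{eq:lin} places $F$ and $\Sigma$ in $\cH^1$, so piece (iii) gains a full power $\tau$ without any smoothing, even though $S$ is only a group. Gronwall's lemma then gives $\E\sup_t\|e_t\|_{\cH}^p\le C\tau^p(1+\|x_0\|_{\cH^1}^p)$, and multiplying by $m^p=(T/\tau)^p$ yields \eqref{Strongorder}.
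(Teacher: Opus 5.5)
Your proof is correct, but it follows a different route from the paper. The paper gives no direct estimate. It checks that \eqref{eq:Lip}--\eqref{eq:lin} verify Assumption 1.1 of \cite{WX15} with regularity parameter $\delta=1$, and notes that the continuous interpolation used there coincides with \eqref{eq:EEM}. It then quotes Proposition 3.1, Lemma 3.3 and Theorem 3.1 of that reference, and simply asserts that tracking \eqref{eq:lin} produces the factor $1+\|x_0\|_{\cH^1}^p$.

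Your argument is a self-contained reconstruction of those cited results, built from three ingredients:
\begin{itemize}
\item the factorization $S_{t-s}=S_tS_{-s}$, which turns the stochastic convolution into a martingale so BDG controls the supremum (the same device the paper later uses for \eqref{eq:convolution-bounds});
\item the observation that $[G]_1=0$ together with \eqref{eq:sinop} for $\beta=1$ makes the displacement Lipschitz in time;
\item the three-term splitting in which \eqref{eq:lin} puts the coefficients in $\cH^1$, so that \eqref{eq:St-I} with $\beta=1$ gains a full factor $\tau$ without any smoothing.
\end{itemize}

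The citation buys brevity and delegates routine technicalities. One of these you use tacitly: finiteness of $\E\sup_t\|\cdot\|^p$ before applying Gronwall. It is easily supplied, for example by a fixed-point argument in $L^p(\Omega;C([0,T];\cH^1))$ or by stopping times.

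Your version buys explicitness. The dependence on $1+\|x_0\|_{\cH^1}^p$ and the uniformity in $m$ are visible rather than asserted. It also uses only the isometry of the group, \eqref{eq:St-I}, \eqref{eq:Lip}--\eqref{eq:lin} and BDG. So it transfers verbatim to the spectral Galerkin system with $N$-uniform constants, which is exactly what the paper needs, and only asserts, in Lemma~\ref{lem:W}.
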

\begin{proof}
The Lipschitz and linear-growth estimates \eqref{eq:Lip}--\eqref{eq:lin}
show that the coefficient conditions in
\cite[Assumption 1.1]{WX15} hold with the regularity parameter
$\delta=1$.  Moreover, the continuous-time extension used in
\cite{WX15} coincides with the interpolation \eqref{eq:EEM}.
Consequently, the uniform moment bound \eqref{eq:LpX},
the temporal estimate \eqref{Holderq}, and the strong-error estimate
\eqref{Strongorder} follow, respectively, from
\cite[Proposition 3.1, Lemma 3.3, and Theorem 3.1]{WX15}. In these estimates, keeping track of the initial value through the
linear-growth bound \eqref{eq:lin} yields the factor
$1+\|x_0\|_{\cH^1}^p$ in
\eqref{eq:LpX}--\eqref{Strongorder}.
\end{proof}
In this paper, we investigate the limiting distribution of the normalized error $Y_T^m$ as $m\to\infty$.
The rate of convergence of $Y_T^m$ will be characterised by the
spatial regularity of the coefficients and the noise
specified in the following assumption.
\begin{assumption}\label{asp:alp}
For some $\alpha\in(1,2]$, there exists a constant
\(C>0\) such that
	\begin{align*}
		{\|\Lambda^{\frac{\alpha-1}{2}}f(u(\cdot))\|_{H}^2 + \sum_{k=1}^\infty\|\Lambda^{\frac{\alpha-1}{2}}\sigma(u(\cdot))Q^{\frac12}e_k\|_H^2 \le C\bigl(1 + \|\Lambda^{\frac{\alpha}{2}}u\|_{H}^2\bigr),\quad u\in H^{\alpha}.}
	\end{align*}
\end{assumption}
\begin{remark}
(1) In the special case where the covariance operator $Q$ is diagonal
in the Dirichlet eigenbasis,
\begin{equation}\label{eq:Qeta}
\eta_i(\xi)=\sqrt2\sin(i\pi\xi),\qquad
Q\eta_i=q_i\eta_i,\qquad q_i\ge0,
\end{equation}
choosing $e_i=\eta_i$ gives
$
\sum_{i=1}^\infty
\|Q^{1/2}e_i\|_{L^\infty(\mathcal O)}^2
=
2\sum_{i=1}^\infty q_i
=
2\operatorname{Tr}Q.
$
Thus, within this class of covariance operators,
Assumption~\ref{asp:F}(2) is equivalent to the trace-class condition
$\operatorname{Tr}Q<\infty$.\\
(2)
When $\alpha=2$, Assumption~\ref{asp:alp} also imposes boundary
compatibility. More precisely, it requires
\[
f(u)\in H_0^1(0,1),\qquad
\sigma(u)Q^{1/2}e_k\in H_0^1(0,1),
\]
for every $u\in H^2(0,1)\cap H_0^1(0,1)$,
where
\(
H_0^1(0,1)
=
\{u\in H^1(0,1):u(0)=u(1)=0\}.
\)
Taking $u\equiv0$ gives $f(0)=0$. If $\sigma(0)\neq0$,
Assumption~\ref{asp:alp} further requires
\(
\sum_{k=1}^{\infty}
\|Q^{1/2}e_k\|_{H^1}^2<\infty.
\)
If, in addition, $Q$ is diagonal in the Dirichlet eigenbasis as in
\eqref{eq:Qeta}, this condition reduces to
\(
\sum_{k=1}^{\infty}\lambda_k q_k<\infty.
\)
\end{remark}
In view of Assumption \ref{asp:alp},
$F:\cH^\alpha\to\cH^\alpha$ and
$\Sigma:\cH^\alpha\to\mathscr{L}_2(H_Q,\cH^\alpha)$ grow at most
linearly. More precisely, for any $z\in \cH^\alpha$,
\begin{align}\label{eq:Fz}
\|F(z)\|_{\cH^{\alpha}}+\|\Sigma(z)\|_{\mathscr{L}_2(H_Q,\cH^\alpha)}\le C(1+\|z\|_{\cH^{\alpha}}).
\end{align}
\begin{lemma}
Let Assumptions~\ref{asp:F}--\ref{asp:alp} hold and let
$x_0\in\cH^\alpha$ be deterministic. Then, for every $p\ge2$,
there exists $C>0$, independent of $m$, such that
\begin{equation}\label{eq:Xalp}
\|X\|_{L^\infty(0,T;L^p(\Omega,\cH^\alpha))}
+\|X^m\|_{L^\infty(0,T;L^p(\Omega,\cH^\alpha))}
\le C(1+\|x_0\|_{\cH^\alpha}).
\end{equation}
\end{lemma}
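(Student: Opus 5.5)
We work with the mild formulations and use that $S_t$ is an isometry on every $\cH^r$ by \eqref{eq:Abound}, so no smoothing is needed. For the exact solution,
\[
X_t=S_tx_0+\int_0^tS_{t-s}F(X_s)\,\ud s+\int_0^tS_{t-s}\Sigma(X_s)\,\ud W_s .
\]
For the scheme \eqref{eq:EEM}, the semigroup property gives
\[
X_t^m=S_tx_0+\int_0^tS_{t-\ttau[s]}F(X^m_{\ttau[s]})\,\ud s+\int_0^tS_{t-\ttau[s]}\Sigma(X^m_{\ttau[s]})\,\ud W_s ,
\]
where $\ttau[s]$ denotes the grid point $\max\{t_i:t_i\le s\}$.

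We take the $L^p(\Omega,\cH^\alpha)$ norm of each term.
\begin{itemize}
\item The initial term satisfies $\|S_tx_0\|_{\cH^\alpha}=\|x_0\|_{\cH^\alpha}$.
\item For the drift term we use Minkowski's inequality, the isometry, and the linear growth \eqref{eq:Fz} in $\cH^\alpha$. This gives the bound $\int_0^t C(1+\|X_s\|_{L^p(\Omega,\cH^\alpha)})\,\ud s$.
\item For the stochastic term we use the Burkholder--Davis--Gundy inequality in the Hilbert space $\cH^\alpha$. The Hilbert--Schmidt norm $\|S_{t-s}\Sigma(X_s)\|_{\mathscr L_2(H_Q,\cH^\alpha)}$ equals $\|\Sigma(X_s)\|_{\mathscr L_2(H_Q,\cH^\alpha)}$, again by the isometry. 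The stochastic integral is therefore bounded by $C\bigl(\int_0^t(1+\|X_s\|_{L^p(\Omega,\cH^\alpha)}^2)\,\ud s\bigr)^{1/2}$.
\end{itemize}
Combining the three bounds and applying Gronwall's lemma to $\sup_{r\le t}\|X_r\|^2_{L^p(\Omega,\cH^\alpha)}$ gives the estimate for $X$.

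The numerical solution is handled the same way. Since $s\mapsto X^m_{\ttau[s]}$ is piecewise constant, we apply a discrete Gronwall argument to $a_n:=\sup_{k\le n}\|X^m_{t_k}\|_{L^p(\Omega,\cH^\alpha)}^2$. The bounds above hold uniformly in $m$, so the resulting constant $C(1+\|x_0\|_{\cH^\alpha})$ is independent of $m$. Intermediate times $t\in(t_n,t_{n+1})$ are then controlled by the mild formula in terms of $a_n$.

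The main obstacle is justifying \eqref{eq:Fz}, i.e.\ that $F$ and $\Sigma$ map $\cH^\alpha$ into $\cH^\alpha$ with linear growth. For $z\in\cH^\alpha$, the second component of $F(z)$ must lie in $H^{\alpha-1}$, with norm $\|\Lambda^{\frac{\alpha-1}{2}}f(z_1)\|_H$. Assumption~\ref{asp:alp} bounds this by $C(1+\|z_1\|_{H^\alpha})$, and the same holds for $\Sigma$ in the Hilbert--Schmidt norm. This is exactly \eqref{eq:Fz}, which the paper states and which we take as given.

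A subtle point is that a priori we do not know $X_s\in\cH^\alpha$, so the Gronwall quantity might be infinite. We handle this in one of two ways. The first option is to use a Picard iteration in the space $L^\infty(0,T;L^p(\Omega,\cH^\alpha))$. Each iterate stays in this space by the estimates above, and the iterates converge in $\cH$ to $X$ by \eqref{eq:Lip}. The uniform $\cH^\alpha$ bound then passes to the limit by weak lower semicontinuity of the norm. The second option is to work with spectral Galerkin truncations, where all norms are finite, and let the dimension tend to infinity. The scheme needs neither option, since each step $X^m_{t_n}$ is in $\cH^\alpha$ by induction.

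Measurability and continuity in $t$ are routine, because $L^\infty$ in time is taken in the $\sup_t$ sense.
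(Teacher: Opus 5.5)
Your proposal is correct and follows the paper's proof. Iterating the isometry of $S_t$, the linear growth \eqref{eq:Fz} and the Burkholder--Davis--Gundy inequality gives the recursion $\phi_{n+1}(t)\le C(1+\|x_0\|_{\cH^\alpha}^2)+C\int_0^t\phi_n(s)\,\ud s$, which bounds the Picard iterates uniformly in $L^\infty(0,T;L^p(\Omega,\cH^\alpha))$; convergence in a weaker norm via \eqref{eq:Lip} and weak lower semicontinuity then transfer the bound to $X$. Your treatment of $X^m$ (finiteness by induction over the steps, then a discrete Gronwall argument) supplies the details the paper omits; if you used the Galerkin alternative instead, you could not cite the paper's Lemma~\ref{lem:Galerkin-convergence}, because its proof relies on \eqref{eq:Xalp}.
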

\begin{proof}
Fix \(p\ge2\). Define \(Z_t^{[0]}=S_tx_0\) and, for \(n\ge0\),
\[
Z_t^{[n+1]}
=
S_tx_0+\int_0^tS_{t-s}F(Z_s^{[n]})\,\ud s
+\int_0^tS_{t-s}\Sigma(Z_s^{[n]})\,\ud W_s,\quad t\in[0,T].
\]
The isometry of \(S_t\), \eqref{eq:Fz}, H\"older's inequality,
and the Burkholder--Davis--Gundy inequality yield, by iteration,
\[
\sup_{n\ge0}\sup_{t\in[0,T]}
\E\|Z_t^{[n]}\|_{\cH^\alpha}^p
\le C_{p,T}\bigl(1+\|x_0\|_{\cH^\alpha}^p\bigr).
\]
Since \(Z_t^{[n]}\to X_t\) in \(L^p(\Omega;\cH^1)\) by
\eqref{eq:Lip}, weak compactness of unit balls in \(L^p(\Omega;\cH^\alpha)\)
and weak lower semicontinuity give the required bound \eqref{eq:Xalp} for \(X_t\).
The corresponding estimate for the numerical solution \(X^m\)
is proved similarly, and we omit the details.
\end{proof}

The following assumption strengthens 
Assumption \ref{asp:F}(1).
\begin{assumption}\label{asp:fg3}
The functions $f$ and $\sigma$ belong to
$\mathcal C^4(\R,\R)$, and their derivatives of orders one through
four are bounded. 
\end{assumption}

Denote by \(\mathscr T^4\) the space of
functions \(\varphi:\cH^1\times\cH\to\R\) whose continuous
derivatives \(D_x^aD_y^b\varphi\) exist for all \(a,b\ge0\) satisfying
\(a+b\le4\) and \(a\le3\), and for which there exist \(q\ge0\) and
\(C_\varphi>0\) such that for every such pair \((a,b)\),
\[
\|D_x^aD_y^b\varphi(x,y)\|_{\mathscr L((\cH^1)^{\otimes a}\times\cH^{\otimes b},\R)}
\le C_\varphi(1+\|x\|_{\cH^1}+\|y\|_{\cH})^q.
\]
Under Assumptions~\ref{asp:F} and \ref{asp:fg3},
equation~\eqref{eq:ASE} is affine in $Y$, with bounded linear
coefficients $DF(X_t)$ and $D\Sigma(X_t)$. Moreover, according to Lemma \ref{lem:limit-basic}, for every
$p\ge2$,
\[
\sup_{t\in[0,T]}
\E\left[
\|\phi_F(X_t)\|_{\cH}^p
+
\|\phi_\Sigma(X_t)\|_{\mathscr L_2(H_Q,\cH)}^p
\right]
<\infty.
\]
Hence, by the standard existence and uniqueness theory for stochastic
evolution equations with Lipschitz coefficients (see, e.g.,
\cite[Section~7.1, Theorem~7.14]{DPZ14}), \eqref{eq:ASE} admits a unique adapted mild solution
\(
Y\in L^p\bigl(\Omega;C([0,T];\cH)\bigr)\) for every
$p\ge2$.
\begin{definition}[Stable convergence]\label{def:stable}
Let $(\Omega,\mathcal F,\mathbb P)$ be a probability space,
$\mathcal G\subset\mathcal F$ a sub-$\sigma$-algebra, and $E$ a
Polish space. Let $\{Z_n\}_{n\ge1}$ be $E$-valued random variables
on $(\Omega,\mathcal F,\mathbb P)$, and let $Z$ be an $E$-valued
random variable defined on an extension
$(\widetilde\Omega,\widetilde{\mathcal F},\widetilde{\mathbb P})$
of the original probability space.
We say that $Z_n$ converges stably to $Z$ with respect to
$\mathcal G$, if, for every bounded $\mathcal G$-measurable random variable
$\xi$ and every $f\in C_b(E)$,
$$
\lim_{n\to\infty}\E[\xi f(Z_n)]
=
\widetilde\E[\xi f(Z)].
$$
Here $\widetilde\E$ denotes expectation on the extended probability
space.
\end{definition}
Recall that $\widetilde W$ denotes an independent copy of $W$
(on an extension of the probability space if necessary). \(\mathbb E\) denotes expectation with respect to the probability measure under
consideration.
Our main result is Theorem \ref{tho:mr} on a quantitative weak
error estimate with respect to test functions in \(\mathscr T^4\). Its proof is deferred to Sections~\ref{S:mainresult} and \ref{S:proof}.
  \begin{theorem}\label{tho:mr}
Let Assumptions \ref{asp:F}, \ref{asp:alp}, and \ref{asp:fg3} hold,
and let ${x_0}\in\cH^\alpha$.  Then, for every
{$\varphi\in\mathscr T^4$},
there exists $C>0$ depending on $\varphi$, but
independent of $m$, such that
\begin{align*}
{\left|\E[\varphi(X_T,Y_T^m)]
-\E[\varphi(X_T,Y_T)]\right|}
\le Cm^{-(\alpha-1)}.
\end{align*}
\end{theorem}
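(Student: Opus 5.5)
The plan is a Kolmogorov-equation argument for the joint Markov process $(X,Y)$, first carried out for spectral Galerkin approximations and then passed to the limit.

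\textbf{Galerkin reduction.} Let $P_N$ be the projection onto the first $N$ Dirichlet modes. Replace $A,F,\Sigma,W$ by their projected versions, giving $X^N$, $X^{N,m}$, $Y^{N,m}=m(X^{N,m}-X^N)$ and the projected limit equation for $Y^N$. By \eqref{eq:Lip}, \eqref{eq:Xalp} and the polynomial growth of $\varphi\in\mathscr T^4$, we have $\E[\varphi(X^N_T,Y^{N,m}_T)]\to\E[\varphi(X_T,Y^m_T)]$ and $\E[\varphi(X^N_T,Y^N_T)]\to\E[\varphi(X_T,Y_T)]$ as $N\to\infty$ for fixed $m$. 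It therefore suffices to prove the bound $Cm^{-(\alpha-1)}$ with $C$ independent of $N$.

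\textbf{Kolmogorov equation and representation.} Set $u(t,x,y)=\E[\varphi(X_T,Y_T)\mid X_t=x,Y_t=y]$. It solves the backward Kolmogorov equation with generator
\[
\mathcal L=\langle Ax+F(x),D_x\rangle+\langle Ay+\Phi_F(x,y),D_y\rangle+\tfrac12\operatorname{Tr}\bigl[\text{diffusion}\bigr],
\]
where the diffusion includes the $X$–$Y$ cross terms and the $\frac{T^2}{12}\phi_\Sigma\phi_\Sigma^*$ term. The first key input is dimension-uniform bounds on $D_x^aD_y^b u$ for $a+b\le4$, $a\le3$, in the norms of $\mathscr T^4$ with polynomial weights. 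Since the equation for $Y$ is affine in $y$, the $y$-derivatives are easy to control. The $x$-derivatives require the variational equations, handled via the group isometry \eqref{eq:Abound} and the Lipschitz bounds \eqref{eq:Lip}, with BDG and Gronwall arguments.

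Next write $Y^m$ as the solution of the SDE obtained by subtracting \eqref{eq:SWE-intro} from \eqref{eq:EEM} and multiplying by $m$. Its drift is $AY^m_t+m(S_{t-\ttau}F(X^m_{\ttau})-F(X_t))$, and its diffusion is $m(S_{t-\ttau}\Sigma(X^m_{\ttau})-\Sigma(X_t))$. Apply Itô's formula to $u(t,X_t,Y^m_t)$ and use the Kolmogorov equation. The error $\E\varphi(X_T,Y^m_T)-\E\varphi(X_T,Y_T)$ then becomes an integral of $D_yu$ and $D^2u$ against the differences between the actual coefficients and those of the limit.

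Expand $S_{t-\ttau}G(X^m_{\ttau})-G(X_t)$ for $G\in\{F,\Sigma\}$. Using $S_r-I\approx rA$, the identity $G(X_{\ttau})-G(X_t)\approx -DG\,(X_t-X_{\ttau})$, and the error identity $X^m-X=Y^m/m$, one obtains
\[
m\bigl(S_{t-\ttau}G(X^m_{\ttau})-G(X_t)\bigr)=DG(X_t)Y^m_t+m(t-\ttau)\phi_G(X_t)-m\,DG\,\Sigma\,(W_t-W_{\ttau})+R .
\]
The weight $m(t-\ttau)$ has mean $T/2$ over each step. Its centered part $\vartheta_m(t)$, together with the quadratic fluctuation (mean square $T^2/12$ relative to $T^2/4$), reproduces the $\frac T2\phi_G$ term and the $\frac{T}{\sqrt{12}}\widetilde W$ covariance. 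The split therefore produces a remainder block $\mathcal E_{\rm rem}$ and a centered block $\mathcal E_{\rm cent}$.

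\textbf{Centered block.} Here $\vartheta_m$ averages to zero on each step. Freezing the integrands at $t_n$ and summing by parts reduces the estimate to weak time-increment bounds of the form
\[
\bigl|\E[\Psi(t)-\Psi(t_n)]\bigr|\le C\tau^{\alpha-1},
\]
where $\Psi$ is a coefficient-weighted derivative such as $D_yu(t,X_t,Y^m_t)\phi_G(X_t)$. The difficulty is that differentiating in time brings an $A$ onto $\phi_G$, which lies only in $\cH^{\alpha-1}$. To avoid this, I would test against the transported direction $S_{t_n-t}\phi_G$, so that the $A$ produced by the Kolmogorov equation cancels against the transport derivative. The remaining terms cost only $\cH^\alpha$ regularity, via \eqref{eq:St-I} with $\beta=\alpha-1$ and \eqref{eq:Fz}.

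\textbf{Remainder block.} The remainder $R$ collects $(S_r-I-rA)$ terms, bounded by $\tau^{\alpha}$ in suitable norms through \eqref{eq:St-I}, together with second-order Taylor terms. After multiplication by $m$, the stochastic terms such as $m\,DG\,\Sigma(W_t-W_{\ttau})$ are only $O(m^{1/2})$ in $L^p$. A direct moment estimate therefore gives only $m^{-\min\{\alpha-1,1/2\}}$.

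This is the main obstacle. I would first try conditioning on $\mathcal F_{\ttau}$: expand $D_yu(t,\cdot)$ around time $\ttau$, so that the martingale increments vanish in conditional expectation. What survives are products of two increments, which are $O(\tau)$ and are controlled using third and fourth mixed derivatives of $u$. This is precisely where the orders $a\le3$ and $a+b\le4$ in $\mathscr T^4$ are needed.

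Combining the two blocks gives the $N$-uniform bound $Cm^{-(\alpha-1)}$, and letting $N\to\infty$ proves the theorem.
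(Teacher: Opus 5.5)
Your plan is essentially the paper's own proof. It follows the same steps: Galerkin reduction to an $N$-uniform estimate, the joint Kolmogorov equation with mixed derivative bounds of total order four ($a\le3$), and the split into a $\vartheta_m$-centered block (using both cancellation identities) and a remainder block. The centered block is handled by backward transport along the wave group to cancel the extra $A$ in the weak increments, and the remainder block by conditioning on $\mathscr F_{\ttau}$ to remove the stochastic remainders.

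A few details need correcting when you write it out.

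\emph{The stochastic term.} The displayed term $m\,DG\,\Sigma\,(W_t-W_{\ttau})$ is identically zero, because $DG(x)h$ depends only on $h_1$ and $\Sigma$ has no displacement component. The genuine stochastic remainder is $DG(X_t)\mathcal Z(t)$. It is $O(m^{-1/2})$, not $O(m^{1/2})$, because $[S_r\Sigma(z)q]_1=\Lambda^{-1/2}\sin(r\Lambda^{1/2})\sigma(z_1)q=O(r)$.

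\emph{The transport.} It must also be applied to the $x$-dependent directions $\Sigma(x)$, $\Phi_\Sigma(x,y)$, $\phi_\Sigma(x)$, in the form $S_{s-t}G(S_{t-s}x)$, not only to the frozen vector $\phi_G(X_{t_i})$.

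\emph{Where fourth-order derivatives enter.} The fourth-order derivatives of $u$ are consumed by the generator bound in the centered block. The remainder block needs only third-order ones.
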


By Theorem \ref{tho:mr}, we can obtain the stable convergence of $Y_T^m$ to $Y_T$
with respect to the $\sigma$-algebra $\sigma(X_T)$ generated by $X_T$. To justify this we establish
the joint convergence in distribution of $X_T$ and $Y_T^m$.
Let $\mathscr{O}$ be an arbitrary open subset of $\cH^1\times\cH$.
By separability of $\cH^1\times\cH$, there exist balls $B(a_j,r_j)$, $j\ge1$,
such that
$\mathscr{O}=\cup_{j\ge1}B(a_j,r_j)$ and
$B(a_j,2r_j)\subset \mathscr{O}$.
Choose $\chi\in C^\infty([0,\infty))$ satisfying
$0\le\chi\le1$, $\chi=1$ on $[0,1]$, and
$\chi=0$ on $[4,\infty)$.
Define
\[
g_n(z)=1-\prod_{j=1}^n
\bigg(1-\chi\Big(
\frac{\|z-a_j\|_{\cH^1\times\cH}^2}{r_j^2}
\Big)\bigg),\quad n\ge1.
\]
Each $g_n$ is smooth with bounded derivatives and hence
belongs to $\mathscr T^4$. Moreover,
$0\le g_n\uparrow\mathbf1_\mathscr{O}$ pointwise.
By Theorem~\ref{tho:mr}, for each fixed $n$,
\begin{align*}
\liminf_{m\to\infty}
\mathbb P\big((X_T,Y_T^m)\in \mathscr{O}\big)
\ge \lim_{m\to\infty}
\mathbb E[g_n(X_T,Y_T^m)]=\mathbb E[g_n(X_T,Y_T)].
\end{align*}
Letting $n\to\infty$ and using monotone convergence yields
\[
\liminf_{m\to\infty}
\mathbb P\big\{(X_T,Y_T^m)\in\mathscr O\big\}
\ge
\mathbb P\big\{(X_T,Y_T)\in\mathscr O\big\}.
\]
The Portmanteau theorem implies that
$(X_T,Y_T^m)$ converges in distribution to
$(X_T,Y_T)$ in $\cH^1\times\cH$, and thus $Y_T^m$ converges stably to $Y_T$ with respect to $\sigma(X_T)$.
 Moreover, Theorem~\ref{tho:mr} yields the following
first-order weak-error expansion for the exponential Euler method.

\begin{corollary}
\label{thm:weak-expansion}

Let the assumptions of Theorem \ref{tho:mr} hold and let
\(\psi\in\mathcal C_b^4(\cH,\R)\). Then there exists a constant \(C>0\),
independent of \(m\), such that
\begin{align}
\big|\E[\psi(X_T^m)]-\E[\psi(X_T)]
-\frac{1}{m}\E[D\psi(X_T)Y_T]\big|
\le Cm^{-\alpha}.
\label{eq:weak-expansion}
\end{align}
In particular, the exponential Euler scheme~\eqref{eq:EEM} has weak order one.
\end{corollary}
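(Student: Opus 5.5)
The plan is to Taylor expand $\psi$ around $X_T$ and apply Theorem~\ref{tho:mr} to the resulting first-order term. Since $X_T^m=X_T+\frac1m Y_T^m$, Taylor's formula with integral remainder gives
\begin{align*}
\psi(X_T^m)-\psi(X_T)
=\frac1m D\psi(X_T)Y_T^m+\frac{1}{m^2}\int_0^1(1-\theta)D^2\psi\Bigl(X_T+\frac{\theta}{m}Y_T^m\Bigr)(Y_T^m,Y_T^m)\,\ud\theta.
\end{align*}
Because $\psi\in\mathcal C_b^4(\cH,\R)$, the second derivative is bounded. Lemma~\ref{lem:limit-basic}, i.e.\ \eqref{Strongorder} with $p=2$, then bounds the expectation of the remainder by $Cm^{-2}\le Cm^{-\alpha}$, since $\alpha\le2$. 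It therefore suffices to show
\[
\bigl|\E[D\psi(X_T)Y_T^m]-\E[D\psi(X_T)Y_T]\bigr|\le Cm^{-(\alpha-1)}.
\]
This is precisely Theorem~\ref{tho:mr} applied to $\varphi(x,y):=D\psi(x)y$, after dividing by $m$.

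The remaining work is to check that $\varphi\in\mathscr T^4$. We have $D_x^aD_y^0\varphi(x,y)=D^{a+1}\psi(x)(\cdot,y)$ and $D_x^aD_y\varphi(x,y)=D^{a+1}\psi(x)(\cdot)$. All $D_y^b\varphi$ with $b\ge2$ vanish. Since the constraint in $\mathscr T^4$ forces $a\le3$, at most the fourth derivative of $\psi$ appears, and it is bounded.

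The one subtlety is that $\psi$ is defined on $\cH$, while the $x$-derivatives in $\mathscr T^4$ are measured as multilinear forms on $\cH^1$. The continuous embedding $\cH^1\hookrightarrow\cH$ handles this: $\|D^{k}\psi(x)\|_{\mathscr L((\cH^1)^{\otimes a}\times\cH^{\otimes b},\R)}\le C\|D^k\psi(x)\|_{\mathscr L(\cH^{\otimes k},\R)}$. This yields the polynomial bound $C(1+\|y\|_{\cH})$, so $q=1$ works.

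Continuity of these derivatives on $\cH^1\times\cH$ follows from the continuity of $D^k\psi$ on $\cH$ together with the same embedding. Combining the two estimates gives \eqref{eq:weak-expansion}. The claim of weak order one follows because $|\E[D\psi(X_T)Y_T]|\le C\,\E\|Y_T\|_{\cH}<\infty$, since $Y\in L^p(\Omega;C([0,T];\cH))$.

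No step here is a real obstacle, since the depth lies entirely in Theorem~\ref{tho:mr}. The only point needing care is the membership $\varphi\in\mathscr T^4$, in particular the index count ($a\le 3$ with $b\le 1$) and the change of norms from $\cH$ to $\cH^1$ in the $x$-directions.
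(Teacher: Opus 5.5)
Your proposal is correct and matches the paper's proof essentially line by line. Both use the same ingredients: Taylor expansion of $\psi$ around $X_T$; the second-order remainder bounded by $Cm^{-2}\le Cm^{-\alpha}$ via boundedness of $D^2\psi$ and \eqref{Strongorder}; and Theorem~\ref{tho:mr} applied to $\varphi(x,y)=D\psi(x)y$. Your explicit check that $\varphi\in\mathscr T^4$, covering the index count and the embedding $\cH^1\hookrightarrow\cH$ for the $x$-directions, fills in a step the paper only asserts.
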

\begin{proof}
Since \(\psi\in\mathcal C_b^4(\cH,\R)\), the mapping $(x,y)\mapsto D\psi(x)y$ belongs to $\mathscr T^4$.
Applying Theorem \ref{tho:mr} therefore implies
\begin{align}
\left|\E[D\psi(X_T)Y_T^m]-\E[D\psi(X_T)Y_T]\right|
\le Cm^{-(\alpha-1)}.
\label{eq:linear-test-rate}
\end{align}
Recalling that \(X_T^m=X_T+m^{-1}Y_T^m\), Taylor's formula gives
\begin{align}\label{eq:psi}\notag
\psi(X_T^m)-\psi(X_T)
={}&m^{-1}D\psi(X_T)Y_T^m\\
&+m^{-2}\int_0^1(1-\theta)
D^2\psi(X_T+\theta m^{-1}Y_T^m)
(Y_T^m,Y_T^m)\,\ud\theta.
\end{align}
By the boundedness of $D^2\psi$ and \eqref{Strongorder},
\[
\E\left|\int_0^1(1-\theta)
D^2\psi(X_T+\theta m^{-1}Y_T^m)(Y_T^m,Y_T^m)\,\ud\theta\right|
\le C\E\|Y_T^m\|_{\cH}^2
\le C.
\] 
Finally, combining this estimate with
\eqref{eq:linear-test-rate} and \(1<\alpha\le2\) proves
\eqref{eq:weak-expansion}.
\end{proof}
\begin{remark}\label{rem:opt}
	The first-order strong and weak convergence rates of the exponential
Euler method are sharp. To see this, let
$
T=\frac12, x_0=0, f\equiv0, \sigma\equiv1
$, and choose
\begin{equation}\label{eq:psidefcos}
\psi_0(x)=\cos(\langle x_1,\eta_1\rangle_H),\quad x\in\cH.
\end{equation}
Let
$
\eta_1(y):=\sqrt{2}\sin(\pi y)$, $y\in[0,1]$,
be the normalized first Dirichlet eigenfunction of $\Lambda$, and set
$Q=\eta_1\otimes\eta_1$, that is,
\[
(Qz)(y)=\langle z,\eta_1\rangle_H\eta_1(y),
\qquad z\in H.
\]
Then the $Q$-Wiener process takes the form
$W_t=\eta_1\beta_t$, where $\beta=\{\beta_t\}_{t\ge0}$ is a
one-dimensional Brownian motion.
In this setting,
$
\E[D\psi_0(X_T)Y_T]
=-\E[V\sin(U)],
$
where the random variables $U$ and $V$ are
\[
U:=\langle[X_T]_1,\eta_1\rangle_H,
\qquad
V:=\langle[Y_T]_1,\eta_1\rangle_H.
\]
Projecting the mild forms of $X$ and $Y$ onto the first Dirichlet
mode and using \eqref{eq:St}, we obtain
\begin{align*}
U
&=\frac1\pi\int_0^{1/2}
\sin\left(\pi\left(\frac12-s\right)\right)\,\ud\beta_s,\\
V
&=\frac14\int_0^{1/2}
\cos\left(\pi\left(\frac12-s\right)\right)\,\ud\beta_s
+\frac1{2\sqrt{12}}\int_0^{1/2}
\cos\left(\pi\left(\frac12-s\right)\right)\,\ud\widetilde\beta_s,
\end{align*}
where $\widetilde\beta$ is independent of $\beta$. Hence, by It\^o's
isometry,
$
\operatorname{Var}(U)=\frac1{4\pi^2},
$ and
$\operatorname{Cov}(U,V)=\frac1{8\pi^2}.
$
Since $(U,V)$ is centered jointly Gaussian,
the integration by parts yields
\[
\E[V\sin(U)]
=
\operatorname{Cov}(U,V)\E[\cos(U)]
=
\frac1{8\pi^2}e^{-1/(8\pi^2)}
\ne0.
\]
It therefore follows from \eqref{eq:weak-expansion} that
\[
\lim_{m\to\infty}
m\left|
\E[\psi_0(X_T^m)]-\E[\psi_0(X_T)]
\right|
=
\frac1{8\pi^2}e^{-1/(8\pi^2)}>0.
\]
In particular, there exist $c_1>0$ and $m_0\in\mathbb N_+$ such that
\begin{equation}\label{eq:psi0}
\bigl|\E[\psi_0(X_T^m)]-\E[\psi_0(X_T)]\bigr|
\ge
c_1m^{-1},
\qquad m\ge m_0.
\end{equation}
Notice that the function $\psi_0:\cH\to\R$ is bounded by $1$ and
globally Lipschitz continuous with $\rm{Lip}(\psi_0)\le1$.
Hence, by \eqref{eq:psi0}, for every $m\ge m_0$,
\begin{align*}
&\mathcal W_1(\mathbb P\circ(X_T^m)^{-1},
\mathbb P\circ(X_T)^{-1})=
\sup_{\psi:\cH\to\R,\ \rm{Lip}(\psi)\le1}
\bigl|\E[\psi(X_T^m)]-\E[\psi(X_T)]\bigr|
\ge c_1m^{-1},
\end{align*}
where $\mathcal W_1$ denotes the
Wasserstein-$1$ distance.
On the other hand, since $\rm{Lip}(\psi_0)\le1$,
$$
\E\|X_T^m-X_T\|_{\cH}
\ge
\bigl|\E[\psi_0(X_T^m)]-\E[\psi_0(X_T)]\bigr|
\ge c_1m^{-1},
\quad m\ge m_0.
$$
Together with the first-order strong error estimate, these bounds show
that both the first-order strong convergence rate and the
Wasserstein-$1$ convergence rate are sharp.\end{remark}
Richardson extrapolation is an acceleration procedure based on
an asymptotic error expansion. For weak approximations of stochastic
differential equations, see, e.g., Talay and Tubaro~\cite{TT90}.
More precisely, suppose that
approximations \(\mathcal{Q}_m\) of a quantity \(\mathcal{Q}\)
satisfy
\[
\mathcal{Q}_m=\mathcal{Q}+c_1m^{-r}+o(m^{-r})
\]
for some $r>0$.
Then the extrapolated approximation
\[
\mathcal{Q}_m^{\mathrm R}:=\frac{2^r\mathcal{Q}_{2m}-\mathcal{Q}_m}{2^r-1}
\]
eliminates the leading error term \(c_1m^{-r}\)
and reduces the approximation error to
$o(m^{-r})$.
In the present setting, we set
\(\mathcal{Q}_m=\E[\psi(X_T^m)]\) and \(r=1\).
\begin{corollary}[Richardson extrapolation]
\label{cor:Richardson}
Let the assumptions of Corollary~\ref{thm:weak-expansion} hold and let
\(\psi\in\mathcal C_b^4(\cH,\R)\).  Then
\begin{align}
\left|2\E[\psi(X_T^{2m})]-\E[\psi(X_T^m)]
-\E[\psi(X_T)]\right|
&\le Cm^{-\alpha}.
\label{eq:Richardson}
\end{align}
\end{corollary}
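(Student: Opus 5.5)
The plan is to apply Corollary~\ref{thm:weak-expansion} twice, once with step number $m$ and once with $2m$, and to combine the two expansions linearly so that the leading term cancels. The key point is that the leading coefficient $\E[D\psi(X_T)Y_T]$ does not depend on $m$. Both $X_T$ and the limit process $Y$ of \eqref{eq:ASE} are defined on the fixed horizon $[0,T]$. The coefficients $\frac{T}{2}\phi_G$ and $\frac{T}{\sqrt{12}}\phi_\Sigma$ involve only $T$ and not the step size. Hence a single constant
\[
c_\psi:=\E[D\psi(X_T)Y_T]
\]
serves for every $m$. This constant is finite because $\psi\in\mathcal C_b^4$ and $Y_T\in L^p(\Omega;\cH)$.

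Concretely, write the two expansions with their remainders. Corollary~\ref{thm:weak-expansion} gives
\[
\E[\psi(X_T^m)]-\E[\psi(X_T)]=\frac{c_\psi}{m}+R_m,\qquad |R_m|\le Cm^{-\alpha}.
\]
Applying the same corollary with $m$ replaced by $2m$ gives
\[
\E[\psi(X_T^{2m})]-\E[\psi(X_T)]=\frac{c_\psi}{2m}+R_{2m},\qquad |R_{2m}|\le C(2m)^{-\alpha}.
\]
The constant $C$ is the same in both, since it is independent of the number of steps. Multiply the second identity by $2$ and subtract the first. The terms $2\cdot\frac{c_\psi}{2m}$ and $\frac{c_\psi}{m}$ cancel exactly. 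Also $2\E[\psi(X_T)]-\E[\psi(X_T)]=\E[\psi(X_T)]$. Therefore
\[
2\E[\psi(X_T^{2m})]-\E[\psi(X_T^m)]-\E[\psi(X_T)]=2R_{2m}-R_m.
\]

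It remains to bound the right-hand side:
\[
|2R_{2m}-R_m|\le 2C(2m)^{-\alpha}+Cm^{-\alpha}=(2^{1-\alpha}+1)Cm^{-\alpha},
\]
which is \eqref{eq:Richardson}.

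There is no real obstacle, since all the analytic work is already contained in Theorem~\ref{tho:mr} through Corollary~\ref{thm:weak-expansion}. The only points needing care are the two observations above. First, the constant in Corollary~\ref{thm:weak-expansion} must be uniform in the step number, so that it applies to $2m$ as well as $m$. Second, the limit $Y_T$, and hence $c_\psi$, must be the same object for both step sizes, which holds because \eqref{eq:ASE} depends only on $T$.
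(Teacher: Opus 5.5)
Your proposal is correct and follows the paper's proof: apply the expansion \eqref{eq:weak-expansion} with $m$ and with $2m$. The $m$-independent coefficient $\E[D\psi(X_T)Y_T]$ then cancels in $2\E[\psi(X_T^{2m})]-\E[\psi(X_T^m)]$, and the remaining error is bounded by $(2^{1-\alpha}+1)Cm^{-\alpha}$.
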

\begin{proof}
Applying \eqref{eq:weak-expansion} with \(m\) and \(2m\) cancels the
term \(m^{-1}\E[D\psi(X_T)Y_T]\) and proves \eqref{eq:Richardson}.
\end{proof}
\begin{remark}\label{rem:stable-Richardson}
Let $\psi\in \mathcal{C}_b^4(\cH,\R)$. By \eqref{Strongorder} and the
boundedness of $D\psi$,
\[
\sup_m\E|D\psi(X_T)Y_T^m|^2
\le C\sup_m\E\|Y_T^m\|_{\cH}^2<\infty,
\]
so the family $\{D\psi(X_T)Y_T^m\}_{m\in\mathbb{N}_+}$ is uniformly
integrable. If one only knew that \(Y_T^m\) converges stably to \(Y_T\)
with respect to the $\sigma$-algebra generated by $X_T$, then,
together with this uniform integrability, one could only conclude that
$$
\lim_{m\to\infty}
\E[D\psi(X_T)Y_T^m]
=\E[D\psi(X_T)Y_T],
$$
without obtaining a convergence rate. This is weaker than
\eqref{eq:linear-test-rate}. Combining this qualitative convergence
with Taylor's formula \eqref{eq:psi} yields
\[
\lim_{m\to\infty}
m\left(
2\E[\psi(X_T^{2m})]
-\E[\psi(X_T^m)]
-\E[\psi(X_T)]
\right)=0.
\]
Hence, the Richardson extrapolation error is \(o(m^{-1})\), but this
argument does not provide any strictly higher algebraic convergence
rate. In contrast, \eqref{eq:linear-test-rate} upgrades this qualitative
$o(m^{-1})$ estimate to the explicit order $m^{-\alpha}$
in Corollary~\ref{cor:Richardson}.
\end{remark}
To test the sharpness of the algebraic rates for
$\alpha\in(1,2)$, we consider the following example.
\begin{example}\label{ex:sharpness}
Fix $\alpha\in(1,2)$. Let
$
T=1,\ x_0=0,\ f\equiv0$, and $\sigma\equiv1$.
Let
$
\eta_k(y):=\sqrt2\sin(k\pi y)$ for $k\in\mathbb N_+$.
Define the covariance operator $Q:H\to H$ by
$
Qz:=\langle z,g\rangle_Hg$, $z\in H$,
where
\begin{equation}\label{eq:gdef}
g
:=
\eta_1+
\sum_{j=1}^{\infty}
\frac{(2\pi\kappa_j)^{1-\alpha}}{j^2}
\eta_{2\kappa_j},
\quad \text{where }\kappa_j:=2^{j^2-1}.
\end{equation}
Next, define
$
\rho:=\sum_{j=1}^{\infty}\frac1j\eta_{2\kappa_j}\in H
$
and the test functions
\begin{align}
\varphi(x,y)
&:=
\langle x_1,\eta_1\rangle_H
\langle\Lambda^{-1/2}y_2,\rho\rangle_H,
\label{eq:varphi}\\
\psi(x)
&:=
\cos\big(
\langle x_1,\eta_1\rangle_H
+
\langle\Lambda^{-1/2}x_2,\rho\rangle_H
\big).
\label{eq:psidef}
\end{align}
\end{example}
Proposition \ref{prop:convergence-rates-sharpness} and Remark \ref{rem:Rich} establish the sharpness of the algebraic
exponents in Theorem~\ref{tho:mr} and Corollary~\ref{cor:Richardson}
through the example above. More precisely, it shows that the exponent
$\alpha-1$ in Theorem~\ref{tho:mr} and the exponent $\alpha$ in
Corollary~\ref{cor:Richardson} cannot, in general, be improved.
\begin{proposition}
\label{prop:convergence-rates-sharpness}
Fix $\alpha\in(1,2)$ and consider the setting of
Example~\ref{ex:sharpness}.
Then Assumptions~\ref{asp:F}, \ref{asp:alp}, and \ref{asp:fg3}
are satisfied, and
\begin{equation}\label{eq:phipsi}
\varphi\in\mathscr T^4,
\qquad
\psi\in\mathcal C_b^4(\cH,\R).
\end{equation}
Moreover, for every $\varepsilon>0$,
\begin{gather}
\limsup_{m\to\infty}
m^{\alpha-1+\varepsilon}
\left|
\E[\varphi(X_1,Y_1^m)]
-
\E[\varphi(X_1,Y_1)]
\right|
=\infty,
\label{eq:shp1}\\
\limsup_{m\to\infty}
m^{\alpha+\varepsilon}
\left|
2\E[\psi(X_1^{2m})]
-
\E[\psi(X_1^m)]
-
\E[\psi(X_1)]
\right|
=\infty.
\label{eq:shp2}
\end{gather}
\end{proposition}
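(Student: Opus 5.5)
The idea is to use the additive-noise structure of Example~\ref{ex:sharpness}. Everything is Gaussian and explicit, and each expectation in \eqref{eq:shp1}--\eqref{eq:shp2} reduces to a one-dimensional It\^o integral over a sum of Dirichlet modes. The lacunary frequencies $2\kappa_j$ let us pick $m$ along a subsequence where one mode is exactly resonant with the grid.

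\textbf{Verifying the hypotheses.} Assumption~\ref{asp:fg3} and Assumption~\ref{asp:F}(1) are trivial because $f\equiv0$ and $\sigma\equiv1$. Assumption~\ref{asp:F}(2) holds because $\|g\|_{L^\infty}\le\sqrt2\sum_j|g_j|<\infty$. For Assumption~\ref{asp:alp} we need $\|\Lambda^{(\alpha-1)/2}g\|_H<\infty$. The $\eta_{2\kappa_j}$-coefficient of $\Lambda^{(\alpha-1)/2}g$ is $(2\pi\kappa_j)^{\alpha-1}\cdot(2\pi\kappa_j)^{1-\alpha}j^{-2}=j^{-2}$, which is square summable, so the condition holds. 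At the same time $g\notin H^{\alpha-1+\delta}$ for any $\delta>0$, since the corresponding coefficients grow like $\kappa_j^{\delta}j^{-2}$; this is the mechanism behind the sharpness. The facts $\varphi\in\mathscr T^4$ and $\psi\in\mathcal C_b^4$ follow because $\varphi$ is a bounded bilinear form on $\cH^1\times\cH$ (as $\Lambda^{-1/2}:H^{-1}\to H$ and $\rho\in H$), and $\psi$ is $\cos$ of a bounded linear functional on $\cH$.

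\textbf{Explicit formulas.} Write $W=g\beta$ and $\mu_j:=2\pi\kappa_j$, so that $\lambda_{2\kappa_j}^{1/2}=\mu_j$. By \eqref{eq:EEM},
\[
X^m_1=\int_0^1S_{1-\underline s}(0,g)^\top\,\ud\beta_s,\qquad
X_1=\int_0^1S_{1-s}(0,g)^\top\,\ud\beta_s .
\]
By \eqref{eq:ASE}, with $\phi_F=0$ and $\phi_\Sigma=A(0,g)^\top=(g,0)^\top$,
\[
Y_1=\int_0^1 S_{1-s}\tfrac12(g,0)^\top\,\ud\beta_s+\tfrac1{\sqrt{12}}\int_0^1S_{1-s}(g,0)^\top\,\ud\widetilde\beta_s .
\]
Projecting with \eqref{eq:St} and using the It\^o isometry gives
\[
\E[\varphi(X_1,Y_1^m)]-\E[\varphi(X_1,Y_1)]=\sum_{j\ge1}\frac{g_{2\kappa_j}}{j\mu_j}\int_0^1\frac{\sin(\pi(1-s))}{\pi}\,e_j^m(s)\,\ud s .
\]
Here
\[
e_j^m(s):=m\bigl[\cos(\mu_j(1-\underline s))-\cos(\mu_j(1-s))\bigr]-\tfrac{\mu_j}{2}\sin(\mu_j(1-s)),
\]
where $g_{2\kappa_j}=\mu_j^{1-\alpha}j^{-2}$ denotes the $\eta_{2\kappa_j}$-coefficient of $g$. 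The $\widetilde\beta$ part has zero covariance with $X$ and drops out. The Richardson quantity is handled the same way: $\E\psi(X_1^m)=\exp(-\tfrac12 v_m)$, where $v_m$ is the variance of $\langle X^m_{1,1},\eta_1\rangle+\langle\Lambda^{-1/2}X^m_{1,2},\rho\rangle$, an explicit sum over the same modes. Expanding the exponential to second order reduces \eqref{eq:shp2} to the behaviour of $2v_{2m}-v_m-v$ modulo terms of size $O(m^{-2})$, which are negligible because $\alpha<2$.

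\textbf{Choosing the subsequence.} I would take $m=m_J:=\kappa_J$, and for \eqref{eq:shp2} also $2m_J$, which resonates as well. Then $\mu_j\tau\in2\pi\mathbb Z$ for all $j\ge J$, so $\cos(\mu_j(1-\underline s))=1$ on the grid. For the resonant index $j=J$, the first part of $e_J^m$ is
\[
m\int_0^1\frac{\sin(\pi(1-s))}{\pi}\bigl(1-\cos(\mu_J(1-s))\bigr)\,\ud s= \frac{2m}{\pi^2}+O(m\mu_J^{-2}),
\]
while the oscillatory term $\mu_J\sin(\mu_J(1-s))/2$ integrates to $O(1)$. The $J$-th summand is therefore of exact order $m\,\mu_J^{-\alpha}J^{-3}\asymp m^{1-\alpha}J^{-3}$. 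Since $J\asymp\sqrt{\log_2 m}$, this exceeds $m^{1-\alpha-\varepsilon}$ by a divergent factor.

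\textbf{Main obstacle.} The crux is showing that the other modes cannot cancel this term. For $j>J$ the same computation bounds the summands by $m\mu_j^{-\alpha}j^{-3}$, and $\kappa_{J+1}/\kappa_J=2^{2J+1}$ makes their sum $o(m^{1-\alpha}J^{-3})$. For $j<J$ we have $\mu_j\tau\le 2\pi\,2^{-(2J-1)}$, and a Taylor expansion of $e^m_j$ in the small parameter $\mu_j\tau$ is needed. Each such mode should contribute at most $C\mu_j^{2-\alpha}m^{-1}$, whose sum is dominated by $C\kappa_{J-1}^{2-\alpha}\kappa_J^{-1}$; this is again $o(m^{1-\alpha}J^{-3})$ by lacunarity. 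The Richardson case is analogous: with $m=\kappa_J$, mode $J$ contributes a nonvanishing multiple of $\mu_J^{-\alpha}J^{-3}\asymp m^{-\alpha}J^{-3}$ to $2v_{2m}-v_m-v$, because the $2m$ and $m$ terms do not cancel in the resonant regime. The remaining modes are controlled by the same lacunary estimates. I expect bookkeeping the small-$\mu_j\tau$ expansion uniformly in $j<J$ to be the main technical work.
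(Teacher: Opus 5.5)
Your proposal is correct and is essentially the paper's proof. You use explicit Gaussian formulas for the rank-one additive example, the resonant lacunary subsequence $m=\kappa_J$ (together with $2\kappa_J$ for Richardson), and a resonant/lower/higher mode split; your Taylor bound $O(\mu_j^{2-\alpha}m^{-1})$ for $j<J$ is weaker than the paper's exact cotangent bound $|R_m(\kappa_i)|\le\pi/(2m^2)$ but suffices by lacunarity.

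There are two slips to repair.

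First, the $Y_1$-term enters $e_j^m$ with a plus sign. It then cancels the cell average of the first-order Taylor term, and in fact it vanishes identically because $\int_0^1\sin(\pi s)\sin(2\pi\kappa s)\,\ud s=0$ for integer $\kappa\ge1$. Bounding it by $O(1)$, as you do for $j=J$, would be fatal for $j<J$.

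Second, $v_m$ is a double sum over modes, so you must also show that the variance defect of $\langle\Lambda^{-1/2}[X_1^m]_2,\rho\rangle_H$ is $o(\mu_J^{-\alpha}J^{-3})$. Crude or first-order bounds do not give this, since the cross terms between the low modes and mode $J$ are themselves linear in $\mu_J^{-\alpha}J^{-3}$. The paper obtains the sharper bound $O(\mu_J^{-2\alpha}J^{-6})$ from exact discrete orthogonality of the low modes against each other, against the constants (the aliased modes $j\ge J$), and against $(-1)^n$ (mode $J$ at $2\kappa_J$). The same Nyquist aliasing is what justifies your "no cancellation" claim: at $2\kappa_J$ mode $J$ contributes only $O(\mu_J^{-\alpha}J^{-3}m^{-2})$ to $v_{2m}-v$.
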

The proof of Proposition~\ref{prop:convergence-rates-sharpness} is given in Appendix~\ref{app:sharpness}.
\begin{remark}\label{rem:Rich}
At the endpoint $\alpha=2$, the second-order Richardson rate remains
sharp. Indeed, consider the example in Remark~\ref{rem:opt} and set
$c=1/(8\pi^2)$. Let
\(
U_m:=\langle [X_{1/2}^m]_1,\eta_1\rangle_H.
\)
A direct computation gives
\[
\operatorname{Var}(U_m)
=
\frac{1}{2m\pi^2}
\sum_{k=1}^m
\sin^2\left(\frac{k\pi}{2m}\right)
=
\frac{1}{4\pi^2}\left(1+\frac1m\right).
\]
Since $U_m$ is centered Gaussian and
$\psi_0(x)=\cos(\langle x_1,\eta_1\rangle_H)$, we have
\[
\E[\psi_0(X_{1/2}^m)]
=
\exp\left(-\frac12\operatorname{Var}(U_m)\right)
=
e^{-c}e^{-c/m}.
\]
Together with
$\E[\psi_0(X_{1/2})]=e^{-c}$ from Remark~\ref{rem:opt}, this gives
\begin{align*}
2\E[\psi_0(X_{1/2}^{2m})]
-\E[\psi_0(X_{1/2}^m)]
-\E[\psi_0(X_{1/2})]
=
-e^{-c}(1-e^{-c/(2m)})^2.
\end{align*}
Multiplying by $m^2$ and letting $m\to\infty$ yields
\[
\lim_{m\to\infty}
m^2\left(
2\E[\psi_0(X_{1/2}^{2m})]
-\E[\psi_0(X_{1/2}^m)]
-\E[\psi_0(X_{1/2})]
\right)
=
-\frac{e^{-c}c^2}{4}\ne0.
\]
Hence, the exponent $2$ in Corollary~\ref{cor:Richardson} cannot, in
general, be improved.
\end{remark}
\section{Reduction to a finite-dimensional estimate}
\label{S:mainresult}
In this section, we first introduce the Galerkin approximations associated
with the stochastic processes \(X\), \(X^m\), \(Y^m\), and \(Y\), and collect the estimates that
are uniform in the projection parameter \(N\). Their terminal
convergence as \(N\to\infty\) then reduces the original assertion (i.e.,  Theorem~\ref{tho:mr}) to
the finite-dimensional weak error estimate stated at the end of this
section.

Let $\{(\lambda_k,\eta_k)\}_{k\ge1}$ be the eigenpairs of $\Lambda$
and define
\begin{align}
P_Nz:=\sum_{k=1}^N\langle z,\eta_k\rangle_H\eta_k,\qquad
\mathcal P_N:=\begin{pmatrix}P_N&0\\0&P_N\end{pmatrix},\qquad
\cH_N:=\mathcal P_N\cH
\label{eq:Galerkin-projections}
\end{align}
{The projection \(P_N\) defined in
\eqref{eq:Galerkin-projections}} extends spectrally to every
$H^r$, $r\in\R$.
Thus $\mathcal P_N$ is a contraction on every $\cH^r$ and
$\mathcal P_Nz\to z$ in $\cH^r$ as $N\to\infty$.  When
$\mathcal P_N$ is applied to an operator
$B\in\mathscr L_2(H_Q,\cH^r)$, it acts on the range of $B$, namely,
$
(\mathcal P_NB)q:=\mathcal P_N(Bq)$ for $q\in H_Q.$
The contractivity of $\mathcal P_N:\cH^r\to\cH^r$ implies that $\mathcal{P}_N:\mathscr L_2(H_Q,\cH^r)\to \mathscr L_2(H_Q,\cH^r)$ is also a contraction. 
Put
\begin{align}
A^{(N)}&:=\mathcal P_NA|_{\cH_N},&
S_t^{(N)}&:=e^{tA^{(N)}}=\mathcal P_NS_t|_{\cH_N},\notag\\
F^{(N)}(z)&:=\mathcal P_NF(z),&
\Sigma^{(N)}(z)&:=\mathcal P_N\Sigma(z),\qquad z\in\cH_N.
\label{eq:Galerkin-coefficients}
\end{align}
Furthermore, $S_t^{(N)}=\mathcal P_NS_t|_{\cH_N}$ is an isometry on
$\cH_N$ with respect to the ambient $\cH^r$-norm, and the shifted
estimate \eqref{eq:St-I} holds with
$S_t^{(N)}$ because
\begin{equation}\label{eq:St-IN}
\|(S_t^{(N)}-I)z\|_{\cH^r}
=\|\mathcal P_N(S_t-I)z\|_{\cH^r}
\le C t^\beta\|z\|_{\cH^{r+\beta}},
\qquad z\in\cH_N.
\end{equation}

In the following lemma, we collect the dimension-uniform estimates
for the projected coefficients. In the sequel, an empty product, as in
\eqref{eq:DjG0} when $j=2$, is understood to be one.
\begin{lemma}\label{lem:FSigN}
(i) {Under Assumption \ref{asp:F},} for any $z,\xi\in\cH_N$,
\begin{align}
&\|F^{(N)}(z)-F^{(N)}(\xi)\|_{\cH^1}
+\|\Sigma^{(N)}(z)-\Sigma^{(N)}(\xi)\|_{\mathscr L_2(H_Q,\cH^1)}
\le C\|z-\xi\|_{\cH},
\label{eq:Galerkin-Lip}\\
&\|F^{(N)}(z)\|_{\cH^1}
+\|\Sigma^{(N)}(z)\|_{\mathscr L_2(H_Q,\cH^1)}
\le C(1+\|z\|_{\cH}).
\label{eq:Galerkin-lin}
\end{align}
(ii) {Under Assumptions \ref{asp:F} and \ref{asp:alp}, for any $z\in\cH_N$,}
\begin{align}
\|F^{(N)}(z)\|_{\cH^\alpha}
+\|\Sigma^{(N)}(z)\|_{\mathscr L_2(H_Q,\cH^\alpha)}
\le C(1+\|z\|_{\cH^\alpha}).
\label{eq:Galerkin-Fz}
\end{align}
(iii) Under Assumptions \ref{asp:F} and \ref{asp:fg3}, for $z,h^1\in\cH_N$,
\begin{align}
&\|DF^{(N)}(z)h^1\|_{\cH^s}
+\|D\Sigma^{(N)}(z)h^1\|_{\mathscr L_2(H_Q,\cH^s)}
\le C\|h_1^1\|_H,\qquad s=0,1,
\label{eq:DG1}
\end{align}
and, for $j\in\{2,3,4\}$ and $z,h^1,\ldots,h^j\in\cH_N$,
\begin{equation}
\begin{aligned}
&\|D^jF^{(N)}(z)(h^1,\ldots,h^j)\|_{\cH}
+\|D^j\Sigma^{(N)}(z)(h^1,\ldots,h^j)\|_
 {\mathscr L_2(H_Q,\cH)}\\
&\quad\le C\prod_{\mu=1}^2\|h_1^\mu\|_H
\prod_{\ell=3}^j\|h_1^\ell\|_{H^1}.
\label{eq:DjG0}
\end{aligned}
\end{equation}
Moreover, \eqref{eq:DjG0} remains valid after any permutation
of the direction arguments. 
\end{lemma}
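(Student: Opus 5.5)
Since $\mathcal P_N$ is a contraction on every $\cH^r$ and on every $\mathscr L_2(H_Q,\cH^r)$, every bound in Lemma~\ref{lem:FSigN} reduces to the corresponding bound for the unprojected maps $F,\Sigma$ evaluated at $z\in\cH_N\subset\cH^1$. The plan is therefore to drop $\mathcal P_N$ at the outset and prove each estimate for $F$ and $\Sigma$ directly. The constants will depend only on $f,\sigma$ and on $\kappa:=\sum_i\|Q^{1/2}e_i\|_{L^\infty}^2$, so they are uniform in $N$.

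Parts (i) and (ii) are then immediate. Estimates \eqref{eq:Galerkin-Lip} and \eqref{eq:Galerkin-lin} follow from \eqref{eq:Lip} and \eqref{eq:lin}, which were already stated under Assumption~\ref{asp:F}. Estimate \eqref{eq:Galerkin-Fz} follows from \eqref{eq:Fz}, because $\cH_N\subset\cH^\alpha$. If one wants to double-check the $\cH^1$ part of (i), the $\cH^1$-norm of $(0,v)$ is $\|v\|_H$. One then uses
\[
\|\Sigma(z)\|_{\mathscr L_2(H_Q,\cH^1)}^2=\sum_i\|\sigma(z_1)Q^{1/2}e_i\|_H^2\le\kappa\|\sigma(z_1)\|_H^2,
\]
together with the Lipschitz property of $f$ and $\sigma$.

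For (iii), Assumption~\ref{asp:fg3} lets us differentiate the Nemytskii maps pointwise:
\[
D^jF(z)(h^1,\dots,h^j)=\Bigl(0,\;f^{(j)}(z_1)\prod_{\ell}h^\ell_1\Bigr),
\]
and similarly for $\Sigma$, with the extra factor $Q^{1/2}e_i$.

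For $j=1$, with $s=1$ we need $\|f'(z_1)h^1_1\|_H\le C\|h^1_1\|_H$, and for $\Sigma$ the bound
\[
\sum_i\|\sigma'(z_1)h_1^1Q^{1/2}e_i\|_H^2\le\|\sigma'\|_\infty^2\kappa\|h^1_1\|_H^2 .
\]
The case $s=0$ follows from $\|\cdot\|_{H^{-1}}\le C\|\cdot\|_H$.

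For $j\ge2$ the target norm is $\cH$, so we must bound the second component in $H^{-1}$. The key tool is duality with the one-dimensional Sobolev embedding $H^1_0\hookrightarrow L^\infty$, which gives $L^1\hookrightarrow H^{-1}$ with $\|v\|_{H^{-1}}\le C\|v\|_{L^1}$. Using this, Hölder's inequality, and the same embedding for $h^\ell_1\in H^1$, $\ell\ge3$, we get
\[
\Bigl\|f^{(j)}(z_1)\prod_\ell h^\ell_1\Bigr\|_{H^{-1}}\le C\|f^{(j)}\|_\infty\|h^1_1\|_H\|h^2_1\|_H\prod_{\ell\ge3}\|h^\ell_1\|_{L^\infty}.
\]
The $H^1$-norm on the factors with $\ell\ge 3$ is exactly what is needed to control their $L^\infty$-norms. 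For $\Sigma$ we argue termwise,
\[
\Bigl\|\sigma^{(j)}(z_1)\prod_\ell h_1^\ell\, Q^{1/2}e_i\Bigr\|_{H^{-1}}\le C\|Q^{1/2}e_i\|_{L^\infty}\Bigl\|\prod_\ell h^\ell_1\Bigr\|_{L^1},
\]
and then sum the squares, using $\kappa<\infty$.

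Permutation invariance of \eqref{eq:DjG0} holds because the bound is symmetric in which two directions carry the $H$-norm. Any two factors may be placed in $L^2$ and the rest in $L^\infty$.

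I expect the only delicate step to be the $H^{-1}$ estimate for $j\ge2$. There the product of two merely $L^2$ directions is only in $L^1$, and the argument depends on the one-dimensional fact $L^1\hookrightarrow H^{-1}$.
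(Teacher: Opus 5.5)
Your proposal is correct and follows essentially the same route as the paper. Both arguments use contractivity of $\mathcal P_N$ to reduce to the unprojected maps, then the pointwise Nemytskii derivative formulas, H\"older's inequality with $H^1\hookrightarrow L^\infty$, the dual bound $\|v\|_{H^{-1}}\le C\|v\|_{L^1}$ (which the paper phrases as $\|\Lambda^{-1/2}w\|_{L^\infty}\le C\|w\|_H$), and Assumption~\ref{asp:F}(2) for the $\Sigma$ terms. For permutation invariance, your observation that any two factors can go in $L^2$ and your appeal to the symmetry of the derivative are equivalent ways of saying the same thing.
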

\begin{proof}
Recall that $\mathcal P_N$ is contractive on $\cH^r$ and
on $\mathscr L_2(H_Q,\cH^r)$.
The estimates \eqref{eq:Galerkin-Lip} and
\eqref{eq:Galerkin-lin} follow from \eqref{eq:Lip} and
\eqref{eq:lin}, respectively.  Under Assumption \ref{asp:alp},
\eqref{eq:Galerkin-Fz} follows from \eqref{eq:Fz}.

It remains to prove the derivative estimates  \eqref{eq:DG1}  and \eqref{eq:DjG0} under
Assumption~\ref{asp:fg3}.
For \(G\in\{F,\Sigma\}\), \(j\in\{1,2,3,4\}\), and
\(z,h^1,\ldots,h^j\in\cH_N\), the projected derivative satisfies
\begin{equation}\label{eq:projected-DjG}
D^jG^{(N)}(z)(h^1,\ldots,h^j)
=\mathcal P_ND^jG(z)(h^1,\ldots,h^j).
\end{equation}
Notice that, for $j=1,2,3,4$ and $h^1,\ldots,h^j\in\cH_N$,
\begin{equation}\label{eq:projected-Nemytskii-derivatives}
\begin{aligned}
D^jF(z)(h^1,\ldots,h^j)
&=\begin{pmatrix}0\\
f^{(j)}(z_1)h_1^1\cdots h_1^j\end{pmatrix},\\
D^j\Sigma(z)(h^1,\ldots,h^j)Q^{1/2}e_i
&=\begin{pmatrix}0\\
\sigma^{(j)}(z_1)h_1^1\cdots h_1^jQ^{1/2}e_i\end{pmatrix}.
\end{aligned}
\end{equation}
For $j=1$, the boundedness of $f'$ and $\sigma'$, and Assumption
\ref{asp:F}(2) {imply}
\begin{equation*}
\begin{aligned}
&\|DF(z)h^1\|_{\cH^1}^2+\|D\Sigma(z)h^1\|_{\mathscr{L}_2(H_Q,\cH^1)}^2\\
&\quad\le \|f'(z_1)h_1^1\|_{H}^2+\sum_{i=1}^\infty\|\sigma'(z_1)h_1^1Q^{1/2}e_i\|_{H}^2\le C\|h_1^1\|_H^2,
\end{aligned}
\end{equation*}
which, along with $\cH^1\hookrightarrow\cH$ and the contractivity of $\mathcal{P}_N$, implies \eqref{eq:DG1}.

Fix $j\in\{2,3,4\}$.  By the embedding
$H^1(\mathcal O)\hookrightarrow L^\infty(\mathcal O)$ and H\"older's
inequality,
\begin{align}\label{eq:projected-product-bound}
\Big\|\prod_{\ell=1}^j h_1^\ell\Big\|_{L^1}
&\le \|h_1^1\|_H\|h_1^2\|_H
\prod_{\ell=3}^j\|h_1^\ell\|_{L^\infty}
\le C\|h_1^1\|_H\|h_1^2\|_H
\prod_{\ell=3}^j\|h_1^\ell\|_{H^1}.
\end{align} 
Using \eqref{eq:projected-Nemytskii-derivatives}, $\|\Lambda^{-1/2}w\|_{L^\infty}\le C\|w\|_H$, the duality between
$H^{-1}$ and $H^1$, and the boundedness of $f^{(j)}$, we obtain
\begin{align*}
&\left\|D^jF(z)(h^1,\ldots,h^j)\right\|_{\cH}\\
&\quad=\sup_{\|w\|_H\le1}
\Big|\Big\langle f^{(j)}(z_1)\prod_{\ell=1}^jh_1^\ell,
\Lambda^{-1/2}w\Big\rangle_H\Big|\le C\|h_1^1\|_H\|h_1^2\|_H
\prod_{\ell=3}^j\|h_1^\ell\|_{H^1}.
\end{align*}
Similarly, the boundedness of $\sigma^{(j)}$,
Assumption \ref{asp:F}(2), and
\eqref{eq:projected-product-bound} imply
\begin{align*}
&\|D^j\Sigma(z)(h^1,\ldots,h^j)\|_
 {\mathscr L_2(H_Q,\cH)}^2=\sum_{i=1}^\infty
\Big\|\sigma^{(j)}(z_1)\prod_{\ell=1}^jh_1^\ell
Q^{1/2}e_i\Big\|_{H^{-1}}^2\\
&\le C\Big\|\prod_{\ell=1}^jh_1^\ell\Big\|_{L^1}^2
\sum_{i=1}^\infty\|Q^{1/2}e_i\|_{L^\infty}^2\le C\|h_1^1\|_H^2\|h_1^2\|_H^2
\prod_{\ell=3}^j\|h_1^\ell\|_{H^1}^2.
\end{align*}
Combining these two estimates with the contractivity of
$\mathcal P_N$, and \eqref{eq:projected-DjG} proves
\eqref{eq:DjG0}.
The formula
\eqref{eq:projected-Nemytskii-derivatives} also shows that the
multilinear derivatives are symmetric in their direction arguments, namely, for every permutation \(\pi\) of
\(\{1,\ldots,j\}\),
\[
D^jG^{(N)}(z)(h^1,\ldots,h^j)
=D^jG^{(N)}(z)(h^{\pi(1)},\ldots,h^{\pi(j)}),\qquad G\in\{F,\Sigma\}.
\]
Hence, \eqref{eq:DjG0} remains valid after any permutation
of the direction arguments. 
\end{proof}
\begin{remark}\label{rem:DFS}
For each $z\in\cH^1$, the dimension-uniform estimates in
Lemma~\ref{lem:FSigN}(iii) and the density of
$\cup_{N\ge1}\cH_N$ in both $\cH$ and $\cH^1$
yield bounded linear extensions
$DF(z):\cH\to\cH^1$ and
$D\Sigma(z):\cH\to\mathscr L_2(H_Q,\cH^1)$.
For $2\le j\le4$, the same reasoning gives continuous multilinear
extensions of $D^jF(z)$ and $D^j\Sigma(z)$ to directions
$h^1,h^2\in\cH$ and $h^3,\ldots,h^j\in\cH^1$,
with values in $\cH$ and $\mathscr L_2(H_Q,\cH)$, respectively. 
In particular, \eqref{eq:DG1} and \eqref{eq:DjG0} remain valid
with $F$ and $\Sigma$ in place of $F^{(N)}$ and $\Sigma^{(N)}$,
respectively.
\end{remark}
For $G\in\{F,\Sigma\}$, $z\in\cH_N$, and
$\xi\in\cH_N$, define
\begin{align}
\phi_G^{(N)}(z)&:=A^{(N)}G^{(N)}(z)-DG^{(N)}(z)A^{(N)}z,\notag\\
\Phi_G^{(N)}(z,\xi)&:=DG^{(N)}(z)\xi+\frac T2\phi_G^{(N)}(z).
\label{eq:Galerkin-Phi}
\end{align}
Since $S_t^{(N)}\mathcal P_N=\mathcal P_NS_t$ and
$A^{(N)}\mathcal P_N=\mathcal P_NA$ on the projected domain $\cH_N$, it follows from
\eqref{eq:projected-DjG} that, for $G\in\{F,\Sigma\}$,
\begin{align}\label{eq:projected-Phi-identities}
\phi_G^{(N)}(z)=\mathcal P_N\phi_G(z),\qquad
\Phi_G^{(N)}(z,\xi)=\mathcal P_N\Phi_G(z,\xi),
\qquad z,\xi\in\cH_N.
\end{align}
We next introduce an auxiliary process
\((\mathbb X^{(N)},\mathbb Y^{m,(N)},\mathbb Y^{(N)})\) on
\([0,T]\):
\begin{align}
\ud\mathbb X_t^{(N)}
&=\big(A^{(N)}\mathbb X_t^{(N)}+F^{(N)}(\mathbb X_t^{(N)})\big)\ud t
+\Sigma^{(N)}(\mathbb X_t^{(N)})\ud W_t,
\label{eq:Galerkin-systemX}\\
\ud\mathbb Y_t^{m,(N)}
&=\Big[A^{(N)}\mathbb Y_t^{m,(N)}
+m\big\{S_{t-\ttau}^{(N)}F^{(N)}(
m^{-1}\mathbb Y_{\ttau}^{m,(N)}+\mathbb X_{\ttau}^{(N)})
-F^{(N)}(\mathbb X_t^{(N)})\big\}\Big]\ud t\notag\\
&\quad
+m\big\{S_{t-\ttau}^{(N)}\Sigma^{(N)}(
m^{-1}\mathbb Y_{\ttau}^{m,(N)}+\mathbb X_{\ttau}^{(N)})
-\Sigma^{(N)}(\mathbb X_t^{(N)})\big\}\ud W_t,
\label{eq:Galerkin-systemYm}\\
\ud\mathbb Y_t^{(N)}
&=\big(A^{(N)}\mathbb Y_t^{(N)}
+\Phi_F^{(N)}(\mathbb X_t^{(N)},\mathbb Y_t^{(N)})\big)\ud t\notag\\
&\quad+\Phi_{\Sigma}^{(N)}(\mathbb X_t^{(N)},\mathbb Y_t^{(N)})
\ud W_t
+\frac T{\sqrt{12}}\phi_{\Sigma}^{(N)}(\mathbb X_t^{(N)})
\ud\widetilde W_t,\quad t\in[0,T]
\label{eq:Galerkin-systemY}
\end{align}
with the initial values
\[
\mathbb X_0^{(N)}=x\in\cH_N,\qquad
\mathbb Y_0^{m,(N)}=\mathbb Y_0^{(N)}=y\in\cH_N.
\]
For each fixed $N$, the drift and diffusion coefficients in
\eqref{eq:Galerkin-systemY} depend affinely on
$\mathbb Y^{(N)}$. Their linear parts are bounded, and their
inhomogeneous parts have at most linear growth in
$\mathbb X^{(N)}$, which ensures the existence of a
unique solution to \eqref{eq:Galerkin-systemY}. 
The parenthesized superscript $(N)$ records the dependence
on the Galerkin parameter \(N\), while the superscript \(m\) records the
dependence on the temporal discretization parameter \(m\). For brevity, we omit the
explicit dependence on \(x\) and \(y\) in the notation of
\((\mathbb X^{(N)},\mathbb Y^{m,(N)},\mathbb Y^{(N)})\).
Throughout this section, generic constants $C$ are independent
of $N$ and $m$, with the dependence on the initial values
$x$ and $y$ made explicit in the estimates.
For \(x=\mathcal P_N{x_0}\) and \(y=0\), we denote by
\((X^{(N)},Y^{m,(N)},Y^{(N)})\) the corresponding solution to
\eqref{eq:Galerkin-systemX}--\eqref{eq:Galerkin-systemY}.
Accordingly, the vector
$(X^{(N)},X^{m,(N)},Y^{m,(N)},Y^{(N)})$ is exactly the spectral
Galerkin approximation of $(X,X^m,Y^m,Y)$, where $X_t^{m,(N)}:=m^{-1} Y_t^{m,(N)}+ X_t^{(N)}$.
\begin{lemma}\label{lem:W}
Let Assumptions \ref{asp:F} and \ref{asp:alp} hold and let
\({x_0}\in\cH^\alpha\).  Then for every $p\ge2$, there exists \(C>0\), independent of \(m\)
and \(N\), such that
\begin{gather}\label{eq:LpX-Galerkin}
\E\|X^{(N)}\|_{L^\infty(0,T;\cH^1)}^p
+\E\|X^{m,(N)}\|_{L^\infty(0,T;\cH^1)}^p
\le C(1+\|{x_0}\|_{\cH^1}^p),\\\label{Holderq-Galerkin}
\E\|[X_t^{(N)}-X_s^{(N)}]_1\|_H^p
\le C(t-s)^p,\qquad 0\le s\le t\le T,\\\label{Strongorder-Galerkin}
\E\|Y^{m,(N)}\|_{L^\infty(0,T;\cH)}^p
\le C,\\\label{eq:Xalp-Galerkin}
\|X^{(N)}\|_{L^\infty(0,T;L^p(\Omega,\cH^\alpha))}
+
\|X^{m,(N)}\|_{L^\infty(0,T;L^p(\Omega,\cH^\alpha))}
\le C(1+\|{x_0}\|_{\cH^\alpha}).
\end{gather}
\end{lemma}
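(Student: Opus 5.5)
The plan is to observe that the Galerkin system \eqref{eq:Galerkin-systemX}--\eqref{eq:Galerkin-systemYm} is structurally identical to the original problem \eqref{eq:SWE-intro}, \eqref{eq:EEM}. The coefficients $A^{(N)},S^{(N)}_t,F^{(N)},\Sigma^{(N)}$ satisfy the same structural bounds with constants independent of $N$. Therefore the proofs of Lemma~\ref{lem:limit-basic} and of \eqref{eq:Xalp} carry over verbatim, and it suffices to check that every ingredient they use holds uniformly in $N$. Concretely, $X^{(N)}$ is the mild solution of $\ud X=(A^{(N)}X+F^{(N)}(X))\ud t+\Sigma^{(N)}(X)\ud W$ with $X_0=\mathcal P_Nx_0$. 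By construction, $X^{m,(N)}=m^{-1}Y^{m,(N)}+X^{(N)}$ solves the exponential Euler interpolation \eqref{eq:EEM} with $A,S,F,\Sigma$ replaced by their projected versions. One verifies this by differentiating $m^{-1}Y^{m,(N)}+X^{(N)}$ using \eqref{eq:Galerkin-systemX}--\eqref{eq:Galerkin-systemYm}, since the $F^{(N)}(\mathbb X_t)$ and $\Sigma^{(N)}(\mathbb X_t)$ terms cancel.

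The uniform ingredients are the following:
\begin{enumerate}[label=(\alph*)]
\item $S^{(N)}_t$ is an isometry on every $\cH^r$ and satisfies \eqref{eq:St-IN}, with $\|A^{(N)}z\|_{\cH^{r-1}}=\|z\|_{\cH^r}$.
\item The Lipschitz and linear growth bounds \eqref{eq:Galerkin-Lip}--\eqref{eq:Galerkin-lin}, together with \eqref{eq:Galerkin-Fz}, hold by Lemma~\ref{lem:FSigN}.
\item The initial datum satisfies $\|\mathcal P_Nx_0\|_{\cH^r}\le\|x_0\|_{\cH^r}$ by contractivity.
\end{enumerate}

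With these in hand, I would prove \eqref{eq:LpX-Galerkin} exactly as in \cite[Proposition 3.1]{WX15}. Write the mild forms, take $\cH^1$ norms using isometry, bound the drift with H\"older, and bound the stochastic convolution with BDG via the factorization/maximal inequality for unitary groups, which applies because $S^{(N)}$ is a group. Then close with Gronwall. The $\cH^\alpha$ bound \eqref{eq:Xalp-Galerkin} follows in the same way from \eqref{eq:Galerkin-Fz}. In finite dimension the solution already lies in $\cH^\alpha$, so no Picard/weak-compactness argument is even needed and Gronwall applies directly.

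For \eqref{Holderq-Galerkin}, write
\[
[X^{(N)}_t-X^{(N)}_s]_1=[(S^{(N)}_{t-s}-I)X^{(N)}_s]_1+\Big[\int_s^tS^{(N)}_{t-r}F^{(N)}\,\ud r\Big]_1+\Big[\int_s^tS^{(N)}_{t-r}\Sigma^{(N)}\,\ud W_r\Big]_1 .
\]
The first term is controlled by \eqref{eq:St-IN} with $r=0,\beta=1$ and the $\cH^1$ moment. The key point for the other two is that the first component of $S_{t-r}(0,v)^\top$ is $\Lambda^{-1/2}\sin((t-r)\Lambda^{1/2})v$, which is bounded in $H$ by $C(t-r)\|v\|_H$ via \eqref{eq:sinop} with $\beta=1$. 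Combined with \eqref{eq:Galerkin-lin}, BDG then gives order $(t-s)^{p}$ for the stochastic term as well, since $\int_s^t(t-r)^2\ud r\sim(t-s)^3$.

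The main obstacle is \eqref{Strongorder-Galerkin}, the uniform first-order strong error. Following \cite[Theorem 3.1]{WX15}, I would write $X^{(N)}_t-X^{m,(N)}_t$ in mild form and split it into $\int_0^tS_{t-r}[F(X_r)-F(X_{\underline r})]$, $\int_0^tS_{t-r}[F(X_{\underline r})-F(X^m_{\underline r})]$, and the analogous noise terms. The second-type terms go into Gronwall via Lipschitz continuity in $\cH$. For the first-type terms, only the first component matters, by \eqref{eq:Galerkin-Lip}, since $\|G(z)-G(\xi)\|\le C\|z_1-\xi_1\|_H$. Thus \eqref{Holderq-Galerkin} gives an $O(\tau)$ bound in $L^p$ for both the drift term and, through BDG, the diffusion term, with the supremum in $t$ handled by the unitary group maximal inequality. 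The point to check carefully is that the error is measured in $\cH$ while the coefficients' differences need only $H$-control of displacements. This is precisely why the rate is $\tau$ without any extra regularity, and all constants are $N$-independent. Multiplying by $m$ yields \eqref{Strongorder-Galerkin}.
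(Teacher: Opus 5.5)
Your proposal is correct and follows the paper's route. The paper's proof just observes that contractivity of $\mathcal P_N$, Lemma~\ref{lem:FSigN}, and \eqref{eq:St-IN} make every estimate in the proofs of Lemma~\ref{lem:limit-basic} and \eqref{eq:Xalp} hold uniformly in $m$ and $N$, which is what you verify in more detail; your remark that finite dimensionality removes the need for the Picard/weak-compactness step in \eqref{eq:Xalp-Galerkin} is a valid simplification.

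One small slip: the scheme's mild form carries $S_{t-\underline r}$ rather than $S_{t-r}$, so your splitting of $X^{(N)}_t-X^{m,(N)}_t$ omits the group-shift term $\int_0^tS_{t-r}(I-S_{r-\underline r})F(X^m_{\underline r})\,\ud r$ and its noise analogue. Both are $O(m^{-1})$ in $L^p(\Omega,\cH)$ by \eqref{eq:St-IN} with $\beta=1$ and the $\cH^1$ linear growth \eqref{eq:Galerkin-lin}, so the Gronwall argument still closes as you describe.
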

\begin{proof}
The contractivity of $\mathcal P_N$, Lemma \ref{lem:FSigN},
\eqref{eq:St-IN}, and \eqref{eq:projected-Phi-identities} show that
all coefficient and group estimates used in the proof of Lemma
\ref{lem:limit-basic} and in the derivation of \eqref{eq:Xalp}
remain valid for the spectral Galerkin approximation processes, with
the same constants uniformly in $m$ and $N$. {Consequently,
\eqref{eq:LpX-Galerkin}--\eqref{eq:Xalp-Galerkin} hold.}
\end{proof}
We will frequently use the following estimate.
Fix \(p\ge2\).
Let $b$ and $B$ be predictable processes with values in
$\cH_N$ and $\mathscr L_2(H_Q,\cH_N)$, respectively.  Since
$S_t^{(N)}$ is an isometry in every $\cH^r$-norm,
H\"older's inequality and the
Burkholder--Davis--Gundy inequality imply
\begin{align}\label{eq:convolution-bounds}
&{\E\sup_{0\le u\le t}
\left\|\int_0^uS_{u-v}^{(N)}b_v\,\ud v\right\|_{\cH^r}^p+
\E\sup_{0\le u\le t}
\left\|\int_0^uS_{u-v}^{(N)}B_v\,\ud W_v\right\|_{\cH^r}^p}\notag\\
&\qquad{\le C\int_0^t\E\left[
\|b_v\|_{\cH^r}^p+
\|B_v\|_{\mathscr L_2(H_Q,\cH^r)}^p\right]\ud v.}
\end{align}
In fact,
for the stochastic term in \eqref{eq:convolution-bounds}, one may write
$S_{u-v}^{(N)}=S_u^{(N)}S_{-v}^{(N)}$ and apply the martingale maximal
inequality. 
The same argument applies to stochastic integrals with respect to
$\widetilde W$.
\begin{lemma}
\label{lem:Galerkin-convergence}
Let Assumptions \ref{asp:F}, \ref{asp:alp}, and \ref{asp:fg3} hold,
let \({x_0}\in\cH^\alpha\), and fix \(m\in\mathbb N_+\). Then
\[
\|Y_T^{(N)}-Y_T\|_{L^2(\Omega,\cH)}
+\|Y_T^{m,(N)}-Y_T^m\|_{L^2(\Omega,\cH)}
\rightarrow0
\qquad\text{as }N\to\infty.
\]
\end{lemma}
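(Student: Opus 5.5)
The plan is a Gronwall argument in mild form, carried out separately for the two terms. As a preliminary step, we show $X^{(N)}\to X$ in $L^p(\Omega,C([0,T];\cH^1))$, since both $Y^{(N)}$ and $Y^{m,(N)}$ depend on it.

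\emph{Step 1: convergence of $X^{(N)}$.} Write the mild forms of $X$ and $X^{(N)}$ and note $S^{(N)}_t\mathcal P_N=\mathcal P_NS_t$. The difference $X_t-X^{(N)}_t$ splits into three pieces:
\begin{itemize}
\item a projection tail $(I-\mathcal P_N)X_t$, controlled through the mild form by $(I-\mathcal P_N)S_tx_0$, $(I-\mathcal P_N)F(X_s)$ and $(I-\mathcal P_N)\Sigma(X_s)$;
\item the term $\int_0^t S_{t-s}\mathcal P_N\bigl(F(X_s)-F(X^{(N)}_s)\bigr)\,\ud s$;
\item its stochastic analogue with $\Sigma$ in place of $F$.
\end{itemize}
By \eqref{eq:Lip} and \eqref{eq:convolution-bounds}, the last two pieces are bounded by $C\int_0^t\E\|X_s-X^{(N)}_s\|^p_{\cH}\,\ud s$. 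For the tails we use the $\cH^\alpha$ moment bounds \eqref{eq:Xalp} together with \eqref{eq:Fz}. This gives $\|(I-\mathcal P_N)z\|_{\cH^1}\le \lambda_{N+1}^{-(\alpha-1)/2}\|z\|_{\cH^\alpha}$, so the tails are $O(\lambda_{N+1}^{-(\alpha-1)/2})$ in $\cH^1$, hence also in $\cH$. Gronwall then gives $\sup_t\E\|X_t-X^{(N)}_t\|^p_{\cH^1}\to0$. For the $\cH^1$-norm of the Lipschitz pieces we use that \eqref{eq:Lip} is already bounded by $\|z_1-\xi_1\|_H$.

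\emph{Step 2: $Y^{(N)}\to Y$.} Subtract the mild forms of \eqref{eq:ASE} and \eqref{eq:Galerkin-systemY}, using \eqref{eq:projected-Phi-identities}. The difference $Y_t-Y^{(N)}_t$ consists of:
\begin{itemize}
\item the tail $(I-\mathcal P_N)Y_t$;
\item linear terms $\mathcal P_N DG(X^{(N)}_s)(Y_s-Y^{(N)}_s)$, bounded through \eqref{eq:DG1};
\item coefficient-difference terms $\mathcal P_N[DG(X_s)-DG(X^{(N)}_s)]Y_s$, controlled by the second-derivative bound \eqref{eq:DjG0}, Cauchy--Schwarz, and Step 1;
\item the terms $\mathcal P_N[\phi_G(X_s)-\phi_G(X^{(N)}_s)]$.
\end{itemize}
The tail goes to zero by dominated convergence: $Y\in L^2(\Omega,C([0,T];\cH))$ and $\mathcal P_N\to I$ strongly on $\cH$.

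The $\phi_G$ difference is the main obstacle, because $\phi_G(z)=AG(z)-DG(z)Az$ involves the unbounded $A$. Here I would exploit that $G$ only acts in the second component and depends only on $z_1$. Then $AG(z)=(g(z_1),0)$ and $DG(z)Az=(0,g'(z_1)z_2)$, where $g$ stands for $f$ or $\sigma(\cdot)Q^{1/2}e_i$. Thus $\phi_G(z)-\phi_G(\xi)$ in $\cH$ is bounded by
\[
C\bigl(\|z_1-\xi_1\|_H+\|g'(z_1)z_2-g'(\xi_1)\xi_2\|_{H^{-1}}\bigr).
\]
By $H^1\hookrightarrow L^\infty$ and duality (as in the proof of Lemma~\ref{lem:FSigN}), this is at most $C\|z-\xi\|_{\cH^1}(1+\|z\|_{\cH^1})$. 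Step 1 and \eqref{eq:LpX} then make this term vanish in $L^2$. The Wiener integrals against $\widetilde W$ are handled by \eqref{eq:convolution-bounds}. A final Gronwall step yields $\|Y_T^{(N)}-Y_T\|_{L^2(\Omega,\cH)}\to0$.

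\emph{Step 3: $Y^{m,(N)}\to Y^m$ for fixed $m$.} Since $m$ is fixed, it suffices to show $X^{m,(N)}\to X^m$ in $L^2(\Omega,\cH)$, because $Y^{m,(N)}-Y^m=m\bigl[(X^{m,(N)}-X^m)-(X^{(N)}-X)\bigr]$. Here $X^{m,(N)}$ satisfies the projected scheme \eqref{eq:EEM} with data $\mathcal P_Nx_0$. The same decomposition as in Step 1 applies, with a tail term plus Lipschitz terms evaluated at the grid points $\ttau$. The tail is controlled by \eqref{eq:Xalp}, and a discrete or continuous Gronwall inequality closes the argument. Multiplying by the fixed factor $m$ preserves convergence to zero, which finishes the proof.
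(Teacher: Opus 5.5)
Your proposal is correct and follows essentially the same route as the paper. The ingredients match: Gronwall estimates on the differences of the projected mild forms, and a locally Lipschitz bound for $\phi_G$ from $\cH^1$ to $\cH$, which you get from the explicit formula $\phi_G(z)=(g(z_1),-g'(z_1)z_2)$ where the paper uses derivative bounds and the mean-value formula. They also include convergence of $X^{(N)}$ to $X$ in $\cH^1$ via the tail factor $\lambda_{N+1}^{-(\alpha-1)/2}$, and the identity $Y^{m,(N)}-Y^m=m[(X^{m,(N)}-X^m)-(X^{(N)}-X)]$. The one real difference is Step~3: you prove $X^{m,(N)}\to X^m$ by a direct Gronwall argument on the projected exponential Euler scheme, whereas the paper cites \cite[Corollary~3.8]{CJL24}, so your version is self-contained.
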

\begin{proof}
{For \(G\in\{F,\Sigma\}\), differentiating
\eqref{PhiG} yields}
\begin{gather*}
D\phi_G(z)q
=ADG(z)q-D^2G(z)(q,Az)-DG(z)Aq,
\qquad z,q\in\cH^1,\\
D_z\Phi_G(z,\xi)q
=D^2G(z)(q,\xi)+\frac T2D\phi_G(z)q,\\
D_\xi\Phi_G(z,\xi)k=DG(z)k,
\qquad \xi,k\in\cH.
\end{gather*}
Recall that \eqref{eq:DG1} and \eqref{eq:DjG0} remain valid
with $F$ and $\Sigma$ in place of $F^{(N)}$ and $\Sigma^{(N)}$,
respectively; see Remark \ref{rem:DFS}.
Together with \eqref{eq:Abound}, this implies
\begin{equation}
\begin{aligned}
&\|D\phi_F(z)q\|_{\cH}
+\|D\phi_\Sigma(z)q\|_{\mathscr L_2(H_Q,\cH)}
\le C(1+\|z\|_{\cH^1})\|q\|_{\cH^1},\\
&\|D_z\Phi_F(z,\xi)q\|_{\cH}
+\|D_z\Phi_\Sigma(z,\xi)q\|_{\mathscr L_2(H_Q,\cH)}
\le C(1+\|z\|_{\cH^1}+\|\xi\|_{\cH})\|q\|_{\cH^1},\\
&\|D_\xi\Phi_F(z,\xi)k\|_{\cH}
+\|D_\xi\Phi_\Sigma(z,\xi)k\|_{\mathscr L_2(H_Q,\cH)}
\le C\|k\|_{\cH}.
\end{aligned}
\label{eq:phi-Phi-derivative-bounds}
\end{equation}
The mean-value formula therefore yields, for
\(z,\bar z\in\cH^1\) and \(\xi,\bar\xi\in\cH\),
\begin{equation}
\begin{aligned}\label{eq:Phi-limit-Lipschitz}
&\|\phi_F(z)-\phi_F(\bar z)\|_{\cH}
+\|\phi_\Sigma(z)-\phi_\Sigma(\bar z)\|_
{\mathscr L_2(H_Q,\cH)}\\
&\qquad\le
C(1+\|z\|_{\cH^1}+\|\bar z\|_{\cH^1})
\|z-\bar z\|_{\cH^1},\\
&\|\Phi_F(z,\xi)-\Phi_F(\bar z,\bar\xi)\|_{\cH}
+\|\Phi_\Sigma(z,\xi)-\Phi_\Sigma(\bar z,\bar\xi)\|_
{\mathscr L_2(H_Q,\cH)}\\
&\qquad\le C\|\xi-\bar\xi\|_{\cH}
+C(1+\|z\|_{\cH^1}+\|\bar z\|_{\cH^1}+\|\bar\xi\|_{\cH})
\|z-\bar z\|_{\cH^1}.
\end{aligned}
\end{equation}
By \eqref{eq:projected-Phi-identities} and the contractivity of
\(\mathcal P_N\), the same estimates hold for
\(\phi_G^{(N)}\) and \(\Phi_G^{(N)}\) when all their arguments belong
to \(\cH_N\).
Let \(e_Y^{(N)}(t):=Y_t^{(N)}-\mathcal P_NY_t\). Subtracting the
projected mild equation for \(Y\) from that for \(Y^{(N)}\), we use
\eqref{eq:projected-Phi-identities} in the form
\[
\begin{aligned}
\phi_G^{(N)}(X_s^{(N)})-\mathcal P_N\phi_G(X_s)
&=\mathcal P_N\bigl(\phi_G(X_s^{(N)})-\phi_G(X_s)\bigr),\\
\Phi_G^{(N)}(X_s^{(N)},Y_s^{(N)})
-\mathcal P_N\Phi_G(X_s,Y_s)
&=\mathcal P_N\bigl(
\Phi_G(X_s^{(N)},Y_s^{(N)})-\Phi_G(X_s,Y_s)\bigr).
\end{aligned}
\]
Thus the unprojected estimates
\eqref{eq:phi-Phi-derivative-bounds} and
\eqref{eq:Phi-limit-Lipschitz}, followed by the contractivity of
{\(\mathcal P_N\), imply}
\begin{align}\label{eq:eYN}
\E\|e_Y^{(N)}(t)\|_{\cH}^2
&\le C\int_0^t\E\|e_Y^{(N)}(s)\|_{\cH}^2\ud s
+C\int_0^t\E\|(I-\mathcal P_N)Y_s\|_{\cH}^2\ud s\\\notag
&\quad+C\int_0^t
\E\!\left[
K_N(s)^2\|X_s^{(N)}-X_s\|_{\cH^1}^2\right]\ud s,
\end{align}
where
\(
K_N(s):=1+\|X_s^{(N)}\|_{\cH^1}
+\|X_s\|_{\cH^1}+\|Y_s\|_{\cH}.
\)

{
The spectral representation of \(\mathcal P_N\) and
\eqref{eq:Xalp} imply
\[
\sup_{t\in[0,T]}
\|(I-\mathcal P_N)X_t\|_{L^2(\Omega,\cH^1)}
\le
\lambda_{N+1}^{-\frac{\alpha-1}{2}}
\sup_{t\in[0,T]}\|X_t\|_{L^2(\Omega,\cH^\alpha)}
\rightarrow0
\qquad\text{as }N\to\infty.
\]
To justify the perturbation estimate directly in $\cH^1$, put
$e_X^{(N)}(t):=X_t^{(N)}-\mathcal P_NX_t$. Subtracting the projected
mild equations and using \eqref{eq:Lip},
\eqref{eq:convolution-bounds} with $r=1$ and $p=2$, and
$\cH^1\hookrightarrow\cH$, we obtain
\[
\E\|e_X^{(N)}(t)\|_{\cH^1}^2
\le C\int_0^t\left(
\E\|e_X^{(N)}(s)\|_{\cH^1}^2
+\E\|(I-\mathcal P_N)X_s\|_{\cH^1}^2
\right)\ud s,
\]
with the initial value $e_X^{(N)}(0)=0$. 
Applying Gronwall's inequality and then using the
triangle inequality gives a constant
independent of $N$ such that
\[
\sup_{t\in[0,T]}
\|X_t^{(N)}-X_t\|_{L^2(\Omega,\cH^1)}
\le C\sup_{t\in[0,T]}
\|(I-\mathcal P_N)X_t\|_{L^2(\Omega,\cH^1)}\rightarrow0
\qquad\text{as }N\to\infty.
\]
}

	Moreover, the estimates \eqref{eq:LpX} and
	\eqref{eq:LpX-Galerkin} with  \(p=8\)  imply $X_t^{(N)}-X_t$ is uniformly bounded in $L^8(\Omega,\cH^1)$ with respect to $t\in[0,T]$ and $N\ge1$.	Therefore, interpolation between \(L^2(\Omega)\) and
	\(L^8(\Omega)\) yields
	\begin{equation}\label{eq:X-Galerkin-standard}
	\begin{aligned}
	\sup_{t\in[0,T]}
	\|X_t^{(N)}-X_t\|_{L^4(\Omega,\cH^1)}
	\rightarrow0
		\qquad\text{as }N\to\infty.
		\end{aligned}
		\end{equation}
For \(G\in\{F,\Sigma\}\),
\eqref{eq:Galerkin-Phi}, \eqref{eq:Abound}, \eqref{eq:lin}, and
{\eqref{eq:DG1} imply, uniformly in \(N\),}
\begin{equation}\label{eq:Phi-growth}
\begin{aligned}
&\|\phi_F^{(N)}(z)\|_{\cH}
+\|\phi_\Sigma^{(N)}(z)\|_{\mathscr L_2(H_Q,\cH)}
\le C(1+\|z\|_{\cH^1}),\\
&\|\Phi_F^{(N)}(z,\xi)\|_{\cH}
+\|\Phi_\Sigma^{(N)}(z,\xi)\|_{\mathscr L_2(H_Q,\cH)}
\le C(1+\|z\|_{\cH^1}+\|\xi\|_{\cH}),
\end{aligned}
\end{equation}
where $z,\xi\in\cH_N$. The mild form of \eqref{eq:Galerkin-systemY}, the
isometry of \(S_t^{(N)}\), Hölder's inequality, and the
Burkholder--Davis--Gundy inequality imply, for every \(p\ge2\) and
\(t\in[0,T]\),
\[
\E\sup_{0\le r\le t}\|Y_r^{(N)}\|_{\cH}^p
\le C\int_0^t\left(
1+\E\|X_s^{(N)}\|_{\cH^1}^p
+\E\sup_{0\le u\le s}\|Y_u^{(N)}\|_{\cH}^p
\right)\ud s.
\]
Using \eqref{eq:LpX-Galerkin} and then Gronwall's inequality, we
obtain
\begin{equation}\label{eq:LpY-Galerkin}
\E\|Y^{(N)}\|_{L^\infty(0,T;\cH)}^p
\le C(1+\|{x_0}\|_{\cH^1}^p).
\end{equation}
The linear equation \eqref{eq:ASE} is treated in the same way, and for every $p\ge2$,
\begin{equation}\label{eq:LpY}
\E\|Y\|_{L^\infty(0,T;\cH)}^p
\le C(1+\|{x_0}\|_{\cH^1}^p).
\end{equation}
The \(p=4\) cases of \eqref{eq:LpX},
\eqref{eq:LpX-Galerkin}, \eqref{eq:LpY-Galerkin}, and
\eqref{eq:LpY} yield a constant \(C>0\), independent of \(N\), such
that, for \(t\in[0,T]\) and \(N\ge1\),
\begin{align}\label{eq:limit-uniform-moments}
\|X_t^{(N)}\|_{L^4(\Omega,\cH^1)}
+\|Y_t^{(N)}\|_{L^4(\Omega,\cH)}
+\|X_t\|_{L^4(\Omega,\cH^1)}
+\|Y_t\|_{L^4(\Omega,\cH)}
\le C.
\end{align}
Hence,
the last two integrals on the right hand side of \eqref{eq:eYN} converge to zero by the strong convergence of
\(\mathcal P_N\), \eqref{eq:X-Galerkin-standard}, H\"older's
inequality, and \eqref{eq:limit-uniform-moments}. Applying Gronwall's
inequality to \eqref{eq:eYN} results in 
$
\|e_Y^{(N)}(T)\|_{L^2(\Omega,\cH)}\rightarrow0$
as $N\to\infty$.
By
the strong convergence of \(\mathcal P_N\) and the dominated
convergence theorem,
\[
\|Y_T^{(N)}-Y_T\|_{L^2(\Omega,\cH)}\le\|e_Y^{(N)}(T)\|_{L^2(\Omega,\cH)}+\|(I-\mathcal P_N)Y_T\|_{L^2(\Omega,\cH)} \rightarrow0
\qquad\text{as }N\to\infty.
\]

For each fixed $m$, \cite[Corollary~3.8]{CJL24} implies convergence
of the spectral Galerkin approximations of the exponential Euler
process in $L^2(\Omega,\cH)$, uniformly in time. In other words, for
every fixed \(m\),
\begin{equation}\label{eq:XtmN}
\sup_{t\in[0,T]}
\|X_t^{m,(N)}-X_t^m\|_{L^2(\Omega,\cH)}
\rightarrow0
\qquad\text{as }N\to\infty.
\end{equation}
{By the triangle inequality,
\[
\|Y_T^{m,(N)}-Y_T^m\|_{L^2(\Omega,\cH)}
\le m\left(
\|X_T^{m,(N)}-X_T^m\|_{L^2(\Omega,\cH)}
+\|X_T^{(N)}-X_T\|_{L^2(\Omega,\cH)}
\right).
\]
Therefore, \eqref{eq:XtmN} and \eqref{eq:X-Galerkin-standard} prove
the convergence of \(Y_T^{m,(N)}\) to \(Y_T^m\), as $N\to \infty$.}
\end{proof}
By Lemma \ref{lem:Galerkin-convergence},
it suffices to prove the following dimension-uniform Galerkin estimate.
\begin{theorem}
\label{tho:Galerkin-main}
Let Assumptions \ref{asp:F}, \ref{asp:alp}, and \ref{asp:fg3} hold,
and let \({x_0}\in\cH^\alpha\). Then for every
{\(\varphi\in\mathscr T^4\)},
there exists a constant \(C>0\),
independent of \(m\) and \(N\), such that
\begin{equation}\label{eq:Galerkin-main-statement}
{\big|\E[\varphi(X_T^{(N)},Y_T^{m,(N)})]
-\E[\varphi(X_T^{(N)},Y_T^{(N)})]\big|}
\le Cm^{-(\alpha-1)}.
\end{equation}
\end{theorem}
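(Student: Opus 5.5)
We argue via the Kolmogorov equation for the joint limiting process. Fix $N$ and $\varphi\in\mathscr T^4$. Let
\[
v(t,x,y):=\E\bigl[\varphi(\mathbb X_{T-t}^{(N)},\mathbb Y_{T-t}^{(N)})\bigr],\qquad (x,y)\in\cH_N\times\cH_N,
\]
where $(\mathbb X^{(N)},\mathbb Y^{(N)})$ solves \eqref{eq:Galerkin-systemX}, \eqref{eq:Galerkin-systemY} started at $(x,y)$. Then $v$ solves the backward Kolmogorov equation $\partial_tv+\mathcal L^{(N)}v=0$ with $v(T)=\varphi$. Here $\mathcal L^{(N)}$ is the generator of the pair. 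Its second-order part combines $\Sigma^{(N)}$ and $\Phi_\Sigma^{(N)}$ through $W$, and adds $\frac{T^2}{12}\phi_\Sigma^{(N)}(\phi_\Sigma^{(N)})^*$ in the $yy$-direction from $\widetilde W$.

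The first step is the regularity estimate, the analogue of Proposition~\ref{prop:varphiY}. We need bounds, uniform in $N$ and polynomial in $\|x\|_{\cH^1}+\|y\|_{\cH}$, for the mixed derivatives $D_x^aD_y^bv$ with $a+b\le4$, $a\le3$. Directions in $y$ are measured in $\cH$ and directions in $x$ in $\cH^1$. For some pairings we also need negative-order (in $\cH$) bounds on $x$-directions coming from \eqref{eq:DjG0}. We obtain these by differentiating the flow: the variational equations are linear wave equations with coefficients $DF,D\Sigma$. We then use the isometry of $S^{(N)}_t$, \eqref{eq:convolution-bounds}, Gronwall's inequality, and Lemma~\ref{lem:FSigN}. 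Since $\mathbb Y$ is affine in $y$, the $y$-derivatives reduce to those of $\varphi$.

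Next, since $Y^{(N)}_0=Y^{m,(N)}_0=0$, we telescope
\[
\E\varphi(X_T^{(N)},Y_T^{m,(N)})-\E\varphi(X_T^{(N)},Y_T^{(N)})=\E v(T,X_T^{(N)},Y_T^{m,(N)})-v(0,x_0^{(N)},0).
\]
Applying It\^o's formula to $v(t,X^{(N)}_t,Y^{m,(N)}_t)$ and subtracting $\mathcal L^{(N)}v$ gives a drift mismatch and a diffusion mismatch. For the drift, we Taylor expand $m\{S_{t-\ttau}F(m^{-1}Y_{\ttau}^m+X_{\ttau})-F(X_t)\}$. This yields $DF(X_t)Y^m_t$ plus $m\vartheta$-type terms, where $\vartheta_m(t)=m(t-\ttau)-\frac{T}{2}\cdot\frac{1}{T}\cdot m\tau$ is a within-step weight with $\int_{t_n}^{t_{n+1}}\vartheta_m=0$. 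The leading part is $m(t-\ttau)\phi_F(X_t)$, up to the $\Sigma\,\ud W$ increment of $F$, which has conditional mean zero. Its average $\frac T2\phi_F$ reproduces $\Phi_F$. The diffusion is handled similarly. The square of $m(t-\ttau)\phi_\Sigma$ averages to $\frac{T^2}{3}$, which splits as $\frac{T^2}{4}+\frac{T^2}{12}$; this accounts for the $\widetilde W$ term. The error thus splits as $\mathcal E_{\rm rem}+\mathcal E_{\rm cent}$. Here $\mathcal E_{\rm rem}$ collects Taylor remainders, including stochastic remainders such as $m\int_{\ttau}^tD\Sigma\,\Sigma\,\ud W$. $\mathcal E_{\rm cent}$ collects terms of the form $\int_0^T\vartheta_m(t)\,\E[\Psi(t,X_t,Y_t^m)]\,\ud t$ and their quadratic analogues, where $\Psi$ is a coefficient-weighted derivative of $v$.

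For $\mathcal E_{\rm rem}$, direct moment bounds give only $m^{-\min\{\alpha-1,1/2\}}$. We therefore take conditional expectation on $\mathcal F_{\ttau}$ and perform one further weak (It\^o) expansion of the test quantity from $\ttau$ to $t$. The martingale part then vanishes, and the leftover costs one more derivative of $v$ (up to total order four). Each such leftover contributes $m\cdot\tau\cdot m^{-(\alpha-1)}$-type factors per step in the correct balance. Spatial regularity $\alpha$ enters through \eqref{eq:St-IN}, namely $\|(S_{t-\ttau}-I)G\|_{\cH^1}\le C\tau^{\alpha-1}\|G\|_{\cH^\alpha}$, combined with \eqref{eq:Galerkin-Fz} and \eqref{eq:Xalp-Galerkin}.

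For $\mathcal E_{\rm cent}$, we use the cancellation of $\vartheta_m$ on each step to reduce it to the increments $\E[\Psi(t)-\Psi(\ttau)]$. These are $O(\tau^{\alpha-1})$ once we know $\Psi$ is H\"older in time along the process. The main obstacle is this step. Differentiating $\Psi$, which contains directions $G(X_t)$ with only $\cH^\alpha$ regularity, produces $A\,G(X_t)$, which is not controlled in $\cH^1$. We would handle this by rewriting $\Psi(t)$ with the direction transported backward as $S_{-t}G$, i.e.\ consider $D_xv(t,X_t,\cdot)S_{t}(S_{-t}G(X_t))$. The It\^o differential of $S_{-t}G(X_t)$ then has its $-AG$ term cancel the $+AG$ term generated by the $A$-part of the Kolmogorov operator acting on $D_xv$. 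Only bounded contributions remain, of size $\tau^{\alpha-1}$ via \eqref{eq:St-IN} with $\beta=\alpha-1$. Summing over $m$ steps, each of length $\tau$, against the weight $m$ gives the bound $Cm^{-(\alpha-1)}$, uniformly in $N$.
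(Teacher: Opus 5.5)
Your overall structure matches the paper: the Kolmogorov equation for $(\mathbb X,\mathbb Y)$, It\^o's formula along $(X,Y^m)$, the split into $\mathcal E_{\rm rem}$ and $\mathcal E_{\rm cent}$, the two cancellations of $\vartheta_m$, and conditional cancellation for the stochastic remainder. However, the step you single out as the main obstacle is not resolved by the mechanism you propose.

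Writing the direction as $S_t\bigl(S_{-t}G(X_t)\bigr)$ is an identity, so it cannot change the It\^o drift of $s\mapsto D_xv(s,X_s,Y_s^m)[G(X_s)]$. Do the bookkeeping with $\xi_s:=S_{-s}G(X_s)$:
\begin{itemize}
\item The factor $S_s$ contributes $+D_xv(AG(X_s))$. This cancels the term $-D_xv(AG(X_s))$ produced by $(\partial_s+\mathcal L^A)D_xv$; the Kolmogorov part generates $-D_xv(Aq)$, not $+$.
\item But $\ud\xi_s$ itself contains $S_{-s}\bigl(-AG(X_s)+DG(X_s)AX_s\bigr)\ud s=-S_{-s}\phi_G(X_s)\,\ud s$.
\end{itemize}
So the net drift still contains $-D_xv(\phi_G(X_s))$. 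For the $\Sigma(x)$-direction in $\Psi_i^{(2)}$, for example, this is $-\partial_{xy}v(\phi_\Sigma(X_s)Q^{1/2}e_n,\cdot)$. Since $\phi_G$ is only $\cH^{\alpha-1}$-valued, this is exactly the quantity of Remark~\ref{rem:Aterm}, which has no $N$-uniform bound for $\alpha<2$. The same obstruction appears for the other directions: for the $\phi_G$-directions the drift contains $A\phi_G$, which is not $\cH$-valued. Note also that an identity cannot produce the factor $\tau^{\alpha-1}$ you attribute to \eqref{eq:St-IN}.

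The missing idea is to replace $\Psi_i^{(\ell)}$ by a genuinely different functional $\widehat\Psi_i^{(\ell)}(s,x,y;t)$ that coincides with $\Psi_i^{(\ell)}$ only at $s=t$:
\begin{itemize}
\item Freeze the $\phi_G$-directions at $X_{t_i}$ and transport them as $S_{s-t}\phi_G(X_{t_i})$, with $\phi_G(X_{t_i})$ held fixed.
\item For state-dependent directions, transport the argument as well: $\Sigma_{s,t}(x)=S_{s-t}\Sigma(S_{t-s}x)$, and likewise $\phi_{\Sigma,s,t}$ and $\Phi_{\Sigma,s,t}$.
\end{itemize}
Since $S_{t-s}x$ is invariant under the linear flow, $(\partial_s+\mathcal L^A)$ of each such direction equals exactly $A$ applied to it. This cancels the $A$-terms of the differentiated Kolmogorov equation, and the remaining drift is bounded, giving $C(t-t_i)$. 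The price $(t-t_i)^{\alpha-1}$ is then paid separately, when comparing $\widehat\Psi_i^{(\ell)}$ with $\Psi_i^{(\ell)}$ at $s=t_i$. That comparison uses \eqref{eq:St-IN} for $(S_{t_i-t}-I)\phi_G$, the bound $\|(S_{t-t_i}-I)X_{t_i}\|_{\cH^1}\le C(t-t_i)^{\alpha-1}\|X_{t_i}\|_{\cH^\alpha}$, and Lemma~\ref{lem:ACLW} for the $y$-argument of $\Phi_{\Sigma,s,t}$.

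A related caution applies to $\mathcal E_{\rm rem}$. An It\^o expansion of the coefficient functionals there would reintroduce the same $A$-loss through the directions $\Sigma(x)$ and $\Phi_\Sigma(x,y)$. The paper avoids any such drift computation. It freezes $(x,y)$ at $(X_{t_i},Y_{t_i}^m)$ in the functionals linear in $\mathcal Z(t)$, where conditional expectations vanish. It then applies the mean-value theorem with the strong bounds $\|X_t-X_{t_i}\|_{\cH^1}+\|Y_t^m-Y_{t_i}^m\|_{\cH}\le Cm^{-\min\{\alpha-1,1/2\}}$ and $\|[\mathcal Z(t)]_1\|_H\le C(t-t_i)^{1/2}$.
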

The proof of Theorem~\ref{tho:Galerkin-main} is deferred to
Section~\ref{S:proof}.
\begin{proof}[Proof of Theorem \ref{tho:mr}]
Fix \(m\in\mathbb N_+\).
Since \(\varphi\in\mathscr T^4\), there exist
\(q\ge0\) and \(C>0\) such that, for
\(x,\bar x\in\cH^1\) and \(y,\bar y\in\cH\),
\[
|\varphi(x,y)-\varphi(\bar x,\bar y)|
\le C\bigl(1+\|x\|_{\cH^1}^q+\|\bar x\|_{\cH^1}^q
+\|y\|_{\cH}^q+\|\bar y\|_{\cH}^q\bigr)
\bigl(\|x-\bar x\|_{\cH^1}+\|y-\bar y\|_{\cH}\bigr).
\]
First, we apply this inequality with
$(x,y)=(X_T^{(N)},Y_T^{m,(N)})$ and
$(\bar x,\bar y)=(X_T,Y_T^m)$. The moment estimates
\eqref{eq:LpX}, \eqref{Strongorder}, \eqref{eq:LpX-Galerkin},
and \eqref{Strongorder-Galerkin}, applied at moment order
\(\max\{2,2q\}\), bound the polynomial factor
in \(L^2(\Omega)\), uniformly in \(N\).
The Cauchy--Schwarz inequality,
\eqref{eq:X-Galerkin-standard}, and
the convergence of \(Y_T^{m,(N)}\) in
Lemma~\ref{lem:Galerkin-convergence}, yield
\begin{align*}
\lim_{N\to\infty}\E\big|\varphi(X_T^{(N)},Y_T^{m,(N)})
-\varphi(X_T,Y_T^m)\big|=0.
\end{align*}

Next, we take $(x,y)=(X_T^{(N)},Y_T^{(N)})$ and
$(\bar x,\bar y)=(X_T,Y_T)$, and use the moment estimates
\eqref{eq:LpX}, \eqref{eq:LpY}, \eqref{eq:LpX-Galerkin}, and
\eqref{eq:LpY-Galerkin}, again at moment order
\(\max\{2,2q\}\).
Another application of the Cauchy--Schwarz inequality, together with
\eqref{eq:X-Galerkin-standard} and
the convergence of \(Y_T^{(N)}\) in
Lemma~\ref{lem:Galerkin-convergence}, gives
\begin{align*}
\lim_{N\to\infty}\E\big|\varphi(X_T^{(N)},Y_T^{(N)})
-\varphi(X_T,Y_T)\big|=0.
\end{align*}
Letting \(N\to\infty\) in \eqref{eq:Galerkin-main-statement}
proves Theorem~\ref{tho:mr}, since the constant \(C\) is independent
of \(N\) and \(m\).
\end{proof}
\section{Dimension-uniform weak error analysis}\label{S:proof}

We first prove Theorem~\ref{tho:Galerkin-main} in subsection \ref{S:4.1} using a weak error
representation and estimates for the remainder and centered
contributions.
The estimates for the remainder and centered
contributions are established in
Sections~\ref{S:remainder-proofs} and \ref{S:one-step-proof},
respectively.
To simplify the notation, we suppress the
superscript \((N)\) in this section, and write
\begin{align*}
(A,S,F,\Sigma,\phi_G,\Phi_G)
&:=(A^{(N)},S^{(N)},F^{(N)},\Sigma^{(N)},
\phi_G^{(N)},\Phi_G^{(N)}),\\
(\mathbb X,\mathbb Y,X,X^m,Y^m,Y)
&:=(\mathbb X^{(N)},\mathbb Y^{(N)},X^{(N)},X^{m,(N)},Y^{m,(N)},Y^{(N)}).
\end{align*}
We note that generic constants \(C\)
throughout this section are independent of \(N\). 
\subsection{Proof of Theorem \ref{tho:Galerkin-main}}\label{S:4.1}
Under Assumptions \ref{asp:F} and \ref{asp:fg3}, for
{\(\varphi\in\mathscr T^4\)}, define
\begin{equation}\label{eq:utx}
{u(t,x,y):=\E[\varphi(\mathbb X_t,\mathbb Y_t)]},\qquad
t\in[0,T],
\end{equation}
where $(x,y)\in\cH_N\times\cH_N$ denote the initial value of $(\mathbb X_t,\mathbb Y_t)$.
For generic initial values $(x,y)\in\cH_N\times\cH_N$, the
moment estimates \eqref{eq:LpX-Galerkin} and \eqref{eq:LpY-Galerkin}
give, for every $p\ge2$,
\begin{align}\label{eq:generic-flow-moments}
\E\sup_{0\le t\le T}\bigl(
\|\mathbb X_t\|_{\cH^1}^p+\|\mathbb Y_t\|_{\cH}^p\bigr)
\le C_p(1+\|x\|_{\cH^1}+\|y\|_{\cH})^p.
\end{align}

Since $\varphi$ has polynomial growth, \eqref{eq:generic-flow-moments}
ensures that $u(t,x,y)$ is well defined.
\begin{proposition}\label{prop:varphiY}
Assume that Assumptions \ref{asp:F} and \ref{asp:fg3} hold and let
{\(\varphi\in\mathscr T^4\)}.
Then there exist \(C>0\) and \(r\ge0\), independent of \(N\), such that, for
\(t\in[0,T]\) and \(x,y\in\cH_N\),
\begin{align}\label{eq:partialab}
\|\partial_x^a\partial_y^b u(t,x,y)\|_
 {\mathscr L((\cH^1)^{\otimes a}\times\cH^{\otimes b},\R)}
\le C(1+\|x\|_{\cH^1}+\|y\|_{\cH})^r
\end{align}
for all integers \(a,b\ge0\) satisfying
\(1\le a+b\le4\) and \(a\le3\).
\end{proposition}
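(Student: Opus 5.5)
We prove Proposition~\ref{prop:varphiY} by differentiating the flow $(x,y)\mapsto(\mathbb X_t,\mathbb Y_t)$ with respect to the initial data, which is legitimate for fixed $N$ because the Galerkin system is finite dimensional with smooth coefficients. By the chain rule (Fa\`a di Bruno), $\partial_x^a\partial_y^b u(t,x,y)$ is a finite sum of expectations of terms
\[
D_x^{a'}D_y^{b'}\varphi(\mathbb X_t,\mathbb Y_t)\bigl(\eta^{(1)},\ldots,\eta^{(a')},\zeta^{(1)},\ldots,\zeta^{(b')}\bigr),
\]
with $a'+b'\le a+b$. Each $\eta^{(i)}$ is a mixed derivative of $\mathbb X_t$, and each $\zeta^{(j)}$ is a mixed derivative of $\mathbb Y_t$, in directions $h^1,\dots,h^a\in\cH^1$ and $k^1,\dots,k^b\in\cH$. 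Since $\mathbb X$ does not depend on $y$, every $\eta^{(i)}$ involves only $x$-directions, so $a'\le a\le3$. This is exactly the range admitted in $\mathscr T^4$. By H\"older's inequality and \eqref{eq:generic-flow-moments}, it therefore suffices to prove, for every $p\ge2$ and uniformly in $N$,
\[
\E\sup_{t\le T}\|D_x^j\mathbb X_t(h^1,\ldots,h^j)\|_{\cH^1}^p\le C(1+\|x\|_{\cH^1})^{r}\prod_{i}\|h^i\|_{\cH^1}^p,\qquad 1\le j\le 3,
\]
together with analogous $\cH$-valued bounds for $D_x^jD_y^\ell\mathbb Y_t$ with $j\le3$ and $j+\ell\le4$.

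\textbf{Step 1: derivatives of $\mathbb X$.} The first variation $\eta^h_t=D_x\mathbb X_t h$ solves
\[
\ud\eta=(A\eta+DF(\mathbb X)\eta)\ud t+D\Sigma(\mathbb X)\eta\,\ud W,\qquad \eta_0=h.
\]
By \eqref{eq:DG1} with $s=1$, the coefficients map $\eta$ into $\cH^1$ with bound $C\|\eta_1\|_H\le C\|\eta\|_{\cH^1}$. Combining \eqref{eq:convolution-bounds} at $r=1$ with Gronwall's inequality gives the $\cH^1$ bound. The higher variations satisfy the same linear equation plus forcing terms $D^jG(\mathbb X)(\eta,\ldots,\eta)$. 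We estimate these in $\cH^1$ using Remark~\ref{rem:DFS}: the first components of all directions lie in $H^1\hookrightarrow L^\infty$, so $\|g\,\prod_\ell\eta_1^\ell\|_H\le C\prod_\ell\|\eta_1^\ell\|_{H^1}$, and the same argument handles the $\Sigma$ term thanks to Assumption~\ref{asp:F}(2). Induction on $j$ then closes the estimates, with constants independent of $x$ up to polynomial factors.

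\textbf{Step 2: derivatives of $\mathbb Y$.} The $y$-derivative $D_y\mathbb Y_t k$ solves the linearized equation $\ud\zeta=(A\zeta+DF(\mathbb X)\zeta)\ud t+D\Sigma(\mathbb X)\zeta\,\ud W$ with $\zeta_0=k$, which is bounded in $\cH$. Because $\mathbb Y$ is affine in $y$, all $D_y^\ell$ with $\ell\ge2$ vanish, so only $D_x^jD_y^{\ell}\mathbb Y$ with $\ell\in\{0,1\}$ remain. Their equations are linear in the unknown with bounded operators $DF(\mathbb X)$ and $D\Sigma(\mathbb X)$ on $\cH$. The forcing terms are of three types: $D^{i}G(\mathbb X)$ applied to $\mathbb X$-variations and one $\mathbb Y$-variation; derivatives of $\phi_G(\mathbb X)$ in $\mathbb X$-directions; and the $\widetilde W$-term. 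We bound them in $\cH$ via \eqref{eq:DjG0}, which needs only two directions in $\cH$ (the $\mathbb Y$-variation and one more) and the rest in $\cH^1$, available from Step~1.

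\textbf{Main obstacle.} The hardest step is the $\phi_G$ contribution. The term $D^j\phi_G(\mathbb X)(\eta^1,\ldots,\eta^j)$ contains $AD^jG(\mathbb X)(\eta,\ldots)$, $D^{j+1}G(\mathbb X)(\eta,\ldots,A\mathbb X)$ and $D^jG(\mathbb X)(\ldots,A\eta^i)$. To bound the first in $\cH$ via \eqref{eq:Abound}, we need $D^jG(\mathbb X)(\eta,\ldots)$ in $\cH^1$, which is the Step~1 $L^\infty$ argument. For the others, note that $A\mathbb X$ and $A\eta^i$ lie only in $\cH$, since $\mathbb X,\eta\in\cH^1$. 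Here we use the freedom in \eqref{eq:DjG0} to place these two rough arguments in the two $H$-slots, keeping all other directions in $H^1$. With $j\le3$ the count works: for $x$-derivatives of order $j$ of $\phi_G$, at most one rough direction, $A\mathbb X$ or $A\eta$, appears in each term, plus possibly one from the $\mathbb Y$-side. This is precisely why the order is capped at $a\le3$ and $a+b\le4$, where \eqref{eq:DjG0} is available only up to $j=4$. After verifying this bookkeeping, Gronwall's inequality gives polynomial bounds uniform in $N$, and \eqref{eq:partialab} follows.
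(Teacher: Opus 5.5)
Your proposal is correct and follows essentially the same route as the paper. The chain rule reduces the claim to uniform variational estimates for the flow (the paper's Lemma \ref{lem:flow-variations}): $x$-variations of $\mathbb X$ are bounded in $\cH^1$ via the $H^1\hookrightarrow L^\infty$ Nemytskii bound \eqref{eq:DjG1}, $\mathbb Y$-variations are bounded in $\cH$ using that $\mathbb Y$ is affine in $y$, and the derivatives of $\phi_G$ are handled as in \eqref{eq:D2phi-bound}--\eqref{eq:D3phi-bound} by putting $A\mathbb X$ or $A\eta$ into an $H$-slot. The only differences are cosmetic: the paper's $\mathbb X$-variation bounds carry no polynomial factor in $x$, which you do not need, and the $\cH^1$-valued estimate you cite via Remark \ref{rem:DFS} is really \eqref{eq:DjG1}.
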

Since the proof of Proposition \ref{prop:varphiY} is lengthy, we
defer it to Section \ref{S:auxiliary-proofs} to keep the main argument
uninterrupted. We next introduce the backward Kolmogorov equation
associated with \((\mathbb X,\mathbb Y)\).
For Hilbert spaces $V_1,V_2$, a bounded bilinear form
$B\in\mathscr L(V_1\times V_2,\R)$, and
$\Sigma_j\in\mathscr L_2(H_Q,V_j)$, define
\begin{equation}\label{eq:trace-contraction}
\mathrm{tr}_Q(\Sigma_1^*B\Sigma_2)
:=\sum_{i=1}^\infty B(\Sigma_1Q^{1/2}e_i,\Sigma_2Q^{1/2}e_i).
\end{equation}
The series is absolutely convergent, and the Cauchy--Schwarz inequality yields
\begin{align}\label{eq:trace}
|\mathrm{tr}_Q(\Sigma_1^*B\Sigma_2)|
\le\|B\|_{\mathscr L(V_1\times V_2,\R)}
\|\Sigma_1\|_{\mathscr L_2(H_Q,V_1)}
\|\Sigma_2\|_{\mathscr L_2(H_Q,V_2)}.
\end{align}
For a sufficiently smooth function
\(\Theta:\cH_N\times\cH_N\to\mathbb R\), define
\begin{align}
\mathcal L_x\Theta(x,y)
&:=\langle Ax+F(x),\partial_x\Theta(x,y)\rangle
+\frac12\mathrm{tr}_Q\left(
\Sigma(x)^*\partial_{xx}\Theta(x,y)\Sigma(x)\right),
\label{eq:Lx}\\
\mathcal L_y\Theta(x,y)
&:=\langle Ay+\Phi_F(x,y),\partial_y\Theta(x,y)\rangle
+\frac12\mathrm{tr}_Q\left(
\Phi_\Sigma(x,y)^*\partial_{yy}\Theta(x,y)
\Phi_\Sigma(x,y)\right)\notag\\
&\quad+\frac{T^2}{24}\mathrm{tr}_Q\left(
\phi_\Sigma(x)^*\partial_{yy}\Theta(x,y)
\phi_\Sigma(x)\right)
+\mathrm{tr}_Q\left(
\Sigma(x)^*\partial_{xy}\Theta(x,y)
\Phi_\Sigma(x,y)\right).
\label{eq:Ly}
\end{align}
For each fixed \(N\), standard finite-dimensional Kolmogorov equation theory,
together with differentiability of the stochastic flow associated with
the Galerkin system (see, e.g., \cite{Kunita90,Krylov80}), shows that
the function \(u\) is a classical solution of the backward Kolmogorov
equation
\begin{equation}\label{eq:Kolu}
\left\{
\begin{aligned}
&\partial_t u(t,x,y)
=(\mathcal L_x+\mathcal L_y)u(t,x,y),
\qquad t\in(0,T],x,y\in\cH_N,\\
&{u(0,x,y)=\varphi(x,y)}.
\end{aligned}
\right.
\end{equation}
Differentiating the integral form of \eqref{eq:Kolu} under the
integral sign also shows that \(\partial_yu,\partial_{xy}u,\partial_{yy}u\) are
continuously differentiable in time.
\begin{lemma}\label{lem:L-L}
Let Assumptions \ref{asp:F}, \ref{asp:alp}, and \ref{asp:fg3} hold,
let \({x_0}\in\cH^\alpha\), and let
{\(\varphi\in\mathscr T^4\)}.
Put
\[
v(t,x,y):=u(T-t,x,y),\qquad t\in[0,T],
\]
where \(u\) is the solution of \eqref{eq:Kolu} with initial value
\(\varphi\).
Then, for any $t\in[0,T]$,
 \begin{align}\label{eq:vXY}
\E[v(t,X_t,Y_t^m)]-v(0,\mathcal P_N{x_0},0)
&=\E\int_{0}^{t}\Big[
\big\langle\mathcal R_F(s),
\partial_yv(s,X_s,Y_s^{m})\big\rangle\\\notag
&+\mathrm{tr}_Q\big(
\Sigma(X_s)^*
\partial_{xy}v(s,X_s,Y_s^{m})
\mathcal R_{\Sigma}(s)\big)+\mathcal E_v(s)
\Big]\ud s,
\end{align}
where the remainders are given by 
\begin{align}
\mathcal R_G(t)
&:=m\left(S_{t-\ttau}G(X_{\ttau}^m)
-G(X_t)\right)-\Phi_G(X_t,Y_t^m),\quad G\in\{F,\Sigma\},
\label{eq:RG}\\
\mathcal E_v(t)
&:=\frac12m^2\mathrm{tr}_Q\Big(
\big[S_{t-\ttau}\Sigma(X_{\ttau}^m)
-\Sigma(X_t)\big]^*
\partial_{yy}v(t,X_t,Y_t^m)
\big[S_{t-\ttau}\Sigma(X_{\ttau}^m)
-\Sigma(X_t)\big]\Big)\notag\\
&\quad-\frac12\mathrm{tr}_Q\Big(
\Phi_\Sigma(X_t,Y_t^m)^*
\partial_{yy}v(t,X_t,Y_t^m)
\Phi_\Sigma(X_t,Y_t^m)\Big)\notag\\
&\quad-\frac{T^2}{24}\mathrm{tr}_Q\Big(
\phi_\Sigma(X_t)^*
\partial_{yy}v(t,X_t,Y_t^m)
\phi_\Sigma(X_t)\Big).
\label{eq:EV}
\end{align} 
\end{lemma}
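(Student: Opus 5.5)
The plan is to apply It\^o's formula to $s\mapsto v(s,X_s,Y_s^m)$ on $[0,t]$ and then remove the drift of $(X,Y^m)$ using the backward Kolmogorov equation \eqref{eq:Kolu}. Since we are in the Galerkin setting with $N$ fixed, $v$ is a classical $C^{1,2}$ function on $[0,T]\times\cH_N\times\cH_N$. Its derivatives have polynomial growth by Proposition~\ref{prop:varphiY}, and the processes have all moments by \eqref{eq:generic-flow-moments}, \eqref{eq:LpX-Galerkin} and \eqref{Strongorder-Galerkin}. Hence It\^o's formula is legitimate in finite dimensions, and the stochastic integrals are true martingales with zero expectation.

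First write the joint dynamics. $X$ solves \eqref{eq:Galerkin-systemX}. $Y^m$ solves \eqref{eq:Galerkin-systemYm}, with drift $AY^m_s+m(S_{s-\underline s}F(X^m_{\underline s})-F(X_s))$ and diffusion $m(S_{s-\underline s}\Sigma(X^m_{\underline s})-\Sigma(X_s))$, both driven by the same $W$. By \eqref{eq:RG}, the $Y^m$ drift equals $AY^m_s+\Phi_F(X_s,Y^m_s)+\mathcal R_F(s)$, and the $Y^m$ diffusion equals $\Phi_\Sigma(X_s,Y_s^m)+\mathcal R_\Sigma(s)$. It\^o's formula then gives
\[
\ud v=\Big[\partial_sv+\mathcal L_xv+\langle AY^m_s+\Phi_F+\mathcal R_F,\partial_yv\rangle+\tfrac12 m^2\mathrm{tr}_Q(\cdots\partial_{yy}v\cdots)+\mathrm{tr}_Q\big(\Sigma^*\partial_{xy}v(\Phi_\Sigma+\mathcal R_\Sigma)\big)\Big]\ud s+\ud M_s .
\]
Here every function is evaluated at $(s,X_s,Y_s^m)$, and the $\partial_{yy}$ trace involves the full diffusion $m[S_{s-\underline s}\Sigma(X^m_{\underline s})-\Sigma(X_s)]$.

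Next, $\partial_sv=-(\mathcal L_x+\mathcal L_y)v$ because $v(s)=u(T-s)$. Subtract $\mathcal L_yv$ from \eqref{eq:Ly}, evaluated at $y=Y^m_s$. The drift terms $\langle Ay+\Phi_F,\partial_yv\rangle$ cancel. The cross term $\mathrm{tr}_Q(\Sigma^*\partial_{xy}v\Phi_\Sigma)$ cancels, leaving $\mathrm{tr}_Q(\Sigma^*\partial_{xy}v\mathcal R_\Sigma)$. The $\partial_{yy}$ contributions leave the full $\frac12m^2$ trace minus the $\Phi_\Sigma$ and $\frac{T^2}{24}\phi_\Sigma$ traces, which is exactly $\mathcal E_v(s)$ in \eqref{eq:EV}. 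The term $\mathcal L_xv$ cancels identically. Taking expectations, killing the martingale, and using $X_0=\mathcal P_Nx_0$, $Y^m_0=0$ yields \eqref{eq:vXY}.

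The main point requiring care is the justification rather than the algebra. One must check that $v$ has enough time regularity near $s=T$ (i.e., $u$ near $0$) for It\^o's formula. One must also handle the piecewise nature of the coefficients, which depend on $\underline s$. I would apply It\^o's formula on each subinterval $[t_n,t_{n+1}]$, where the coefficients are adapted and continuous in $s$, and then sum. The martingale property follows from BDG together with the polynomial growth bounds. If only continuity of $u$ at $t=0$ is available, I would prove the identity on $[0,t]$ for $t<T$ and pass to the limit using dominated convergence.
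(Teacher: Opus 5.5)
Your proposal is correct and follows the paper's argument: you apply It\^o's formula to $v(s,X_s,Y_s^m)$, substitute $\partial_s v=-(\mathcal L_x+\mathcal L_y)v$, and carry out the same term-by-term cancellation, which leaves $\langle\mathcal R_F,\partial_y v\rangle$, the $\mathcal R_\Sigma$ cross term, and $\mathcal E_v$; the required integrability comes from Lemma~\ref{lem:W} and Proposition~\ref{prop:varphiY}. Splitting at the grid points is not needed, since It\^o's formula applies directly to the piecewise progressively measurable coefficients, but it does no harm.
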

\begin{proof}
Applying It\^o's formula to \(v(t,X_t,Y_t^m)\), taking
expectations, and using \eqref{eq:Kolu} gives the identity \eqref{eq:vXY}.
The required integrability follows from Lemma~\ref{lem:W}
and Proposition~\ref{prop:varphiY}.
\end{proof}
To write the weak error representation compactly, for
\(h\in\cH_N\) and
\(U,V\in\mathscr L_2(H_Q,\cH_N)\), define
\begin{equation}\label{eq:ABCfunctionals}
\left\{
\begin{aligned}
&\mathscr A_t(h)
:=\langle h,\partial_yv(t,X_t,Y_t^m)\rangle,\\
&\mathscr C_t(U)
:=\mathrm{tr}_Q\big(
\Sigma(X_t)^*\partial_{xy}v(t,X_t,Y_t^m)U\big),\\
&\mathscr B_t(U,V)
:=\mathrm{tr}_Q\big(
U^*\partial_{yy}v(t,X_t,Y_t^m)V\big).
\end{aligned}
\right.
\end{equation}
Note that $\mathscr B_t(U,V)=\mathscr B_t(V,U).$
By \eqref{eq:RG},
\begin{equation}\label{eq:mS}
m\big\{S_{t-\ttau}\Sigma(X_{\ttau}^m)-\Sigma(X_t)\big\}
=\mathcal R_\Sigma(t)+\Phi_\Sigma(X_t,Y_t^m).
\end{equation}
Since {\(v(T,x,y)=\varphi(x,y)\)} and
{\(v(0,\mathcal P_N{x_0},0)
=\E[\varphi(X_T,Y_T)]\)}, taking \(t=T\) in
Lemma \ref{lem:L-L} and substituting
\eqref{eq:RG}, \eqref{eq:EV}, and \eqref{eq:mS} yield
\begin{align}\label{eq:delatvarphi-ABC}
&{\E[\varphi(X_T,Y_T^m)]-\E[\varphi(X_T,Y_T)]}\notag\\
&=\E\int_0^T\Big[
\mathscr A_t\big(\mathcal R_F(t)\big)
+\mathscr C_t\big(\mathcal R_\Sigma(t)\big)
+\mathscr B_t\big(\mathcal R_\Sigma(t),
\Phi_\Sigma(X_t,Y_t^m)\big)\notag\\
&\quad+\frac12\mathscr B_t\big(
\mathcal R_\Sigma(t),\mathcal R_\Sigma(t)\big)
-\frac{T^2}{24}\mathscr B_t\big(
\phi_\Sigma(X_t),\phi_\Sigma(X_t)\big)\Big]\ud t.
\end{align}
For \(G\in\{F,\Sigma\}\), by \eqref{eq:RG}, we rewrite
\begin{align}\label{eq:mathRG}\notag
\mathcal R_G(t)
&=m\left(S_{t-\ttau}G(X_{\ttau}^m)-G(X_t)\right)
-DG(X_t)Y_t^m
-\frac T2\left(AG(X_t)-DG(X_t)AX_t\right)\\\notag
&=m\big(S_{t-\ttau}-I-(t-\ttau)A\big)
G(X_{\ttau}^m)\\\notag
&\quad+m\left(t-\ttau-\frac{T}{2m}\right)
AG(X_{\ttau}^m)
+\frac T2A\big(G(X_{\ttau}^m)-G(X_t)\big)\\
&\quad+m\big(G(X_{\ttau}^m)-G(X_t)\big)
-DG(X_t)\left(Y_t^m-\frac T2AX_t\right).
\end{align}
We have the following relation
\begin{equation*}
\begin{aligned}
&m(X_t-X_{\ttau})-\frac T2AX_{\ttau}\\
&\quad=m\big(S_{t-\ttau}-I-(t-\ttau)A\big)X_{\ttau}+m\left(t-\ttau-\frac{T}{2m}\right)AX_{\ttau}
+m\big(X_t-S_{t-\ttau}X_{\ttau}\big).
\end{aligned}
\end{equation*}
Substituting this into the preceding expression \eqref{eq:mathRG} leads to
\begin{align}\label{eq:compactRG}
\mathcal R_G(t)
&=\vartheta_m(t)\phi_G(X_{\ttau})
+\widetilde{\mathcal R}_G(t),
\end{align}
where $\vartheta_m(t):=m\left(t-\ttau-\frac{T}{2m}\right)$  and
\begin{align}\label{eq:tildeRG}\notag
\widetilde{\mathcal R}_G(t)
&=m\big(S_{t-\ttau}-I-(t-\ttau)A\big)
G(X_{\ttau}^m)+\vartheta_m(t)A
\big(G(X_{\ttau}^m)-G(X_{\ttau})\big)
\notag\\
&\quad+\frac T2A\big(G(X_{\ttau}^m)-G(X_t)\big)\notag\\\notag
&\quad+\int_0^1\Big(
DG\big(\theta X_{\ttau}^m+(1-\theta)X_{\ttau}\big)
-DG(X_t)\Big)Y_{\ttau}^m\ud\theta\\\notag
&\quad+DG(X_t)(Y_{\ttau}^m-Y_t^m)\notag\\
&\quad+m\int_0^1\Big(
DG(X_t)-DG\big(\theta X_{\ttau}+(1-\theta)X_t\big)\Big)
(X_t-X_{\ttau})\ud\theta\notag\\\notag
&\quad+\big(DG(X_{\ttau})-DG(X_t)\big)
\Big(m\left(t-\ttau-\frac{T}{2m}\right)AX_{\ttau}\Big)+\frac T2DG(X_t)A(X_t-X_{\ttau})\\
&\quad-DG(X_t)\Big[
m\big(S_{t-\ttau}-I-(t-\ttau)A\big)X_{\ttau}
+m\big(X_t-S_{t-\ttau}X_{\ttau}\big)\Big].
\end{align}
The function $\vartheta_m$ defined above satisfies the
two cancellations
\begin{equation}\label{eq:centered-identities}
\int_{t_i}^{t_{i+1}}\vartheta_m(t)\ud t=0,\qquad
\int_{t_i}^{t_{i+1}}
\left(\frac12\vartheta_m(t)^2-\frac{T^2}{24}\right)\ud t=0.
\end{equation}

Substituting \eqref{eq:compactRG} into
\eqref{eq:delatvarphi-ABC}, and using
\eqref{eq:centered-identities}, we arrive at
\begin{equation}\label{eq:two-blocks}
{\E[\varphi(X_T,Y_T^m)]-\E[\varphi(X_T,Y_T)]}
=\mathcal E_{\rm rem}+\mathcal E_{\rm cent},
\end{equation}
where the remainder contributions $\mathcal{E}_{\rm rem}$ and the centered contributions $\mathcal{E}_{\rm cent}$ are given by, respectively, 
\begin{align}\label{eq:Erem}
\mathcal E_{\rm rem}
&:=\E\int_0^T\Big[
\mathscr A_t(\widetilde{\mathcal R}_F(t))
+\mathscr C_t(\widetilde{\mathcal R}_\Sigma(t))
+\mathscr B_t\big(\widetilde{\mathcal R}_\Sigma(t),
\Phi_\Sigma(X_t,Y_t^m)\big)\notag\\
&\quad\qquad
+\vartheta_m(t)\mathscr B_t\big(
\phi_\Sigma(X_{\ttau}),
\widetilde{\mathcal R}_\Sigma(t)\big)
+\frac12\mathscr B_t\big(
\widetilde{\mathcal R}_\Sigma(t),
\widetilde{\mathcal R}_\Sigma(t)\big)
\Big]\ud t,\\\label{eq:Ecent}
\mathcal E_{\rm cent}
&:=\E\sum_{i=0}^{m-1}\int_{t_i}^{t_{i+1}}\bigg[
\vartheta_m(t)\big\{
\mathscr A_t(\phi_F(X_{t_i}))
-\mathscr A_{t_i}(\phi_F(X_{t_i}))\big\}\notag\\
&\quad\qquad+\vartheta_m(t)\big\{\mathscr C_t(\phi_\Sigma(X_{t_i}))
-\mathscr C_{t_i}(\phi_\Sigma(X_{t_i}))\big\}\notag\\
&\quad\qquad+\vartheta_m(t)\big\{\mathscr B_t\big(\phi_\Sigma(X_{t_i}),
\Phi_\Sigma(X_t,Y_t^m)\big)-\mathscr B_{t_i}\big(
\phi_\Sigma(X_{t_i}),
\Phi_\Sigma(X_{t_i},Y_{t_i}^m)\big)\big\}\notag\\
&\quad\qquad+\frac12\vartheta_m(t)^2
\big\{\mathscr B_t\big(\phi_\Sigma(X_{t_i}),\phi_\Sigma(X_{t_i})\big)
-\mathscr B_{t_i}\big(
\phi_\Sigma(X_{t_i}),\phi_\Sigma(X_{t_i})\big)\big\}\notag\\
&\quad\qquad-\frac{T^2}{24}
\big\{\mathscr B_t\big(\phi_\Sigma(X_t),\phi_\Sigma(X_t)\big)
-\mathscr B_{t_i}\big(
\phi_\Sigma(X_{t_i}),\phi_\Sigma(X_{t_i})\big)\big\}
\bigg]\ud t.
\end{align}
The following proposition provides the required estimate for
$\mathcal E_{\rm cent}$.
\begin{proposition}\label{prop:one-step}
Let Assumptions \ref{asp:F}, \ref{asp:alp}, and \ref{asp:fg3} hold,
let ${x_0}\in\cH^\alpha$, and let \({\varphi}\in \mathscr T^4\).
Then there exists
$C>0$, independent of \(m\) and \(N\), such that, for
\(t\in[t_i,t_{i+1}]\) and \(i=0,1,\ldots,m-1\),
\begin{equation}\label{eq:one-step}
\begin{split}
&\left|\E\left[\mathscr A_t(\phi_F(X_{t_i}))
-\mathscr A_{t_i}(\phi_F(X_{t_i}))\right]\right|
\le C(t-t_i)^{\alpha-1},\\
&\left|\E\left[\mathscr C_t(\phi_{\Sigma}(X_{t_i}))
-\mathscr C_{t_i}(\phi_{\Sigma}(X_{t_i}))\right]\right|
\le C(t-t_i)^{\alpha-1},\\
&\left|\E\left[
\mathscr B_t\big(\phi_{\Sigma}(X_{t_i}),
\Phi_{\Sigma}(X_t,Y_t^m)\big)
-\mathscr B_{t_i}\big(\phi_{\Sigma}(X_{t_i}),
\Phi_{\Sigma}(X_{t_i},Y_{t_i}^m)\big)
\right]\right|
\le C(t-t_i)^{\alpha-1},\\
&\left|\E\left[
\mathscr B_t\big(\phi_{\Sigma}(X_{t_i}),
\phi_{\Sigma}(X_{t_i})\big)
-\mathscr B_{t_i}\big(\phi_{\Sigma}(X_{t_i}),
\phi_{\Sigma}(X_{t_i})\big)\right]\right|
\le C(t-t_i)^{\alpha-1},\\
&\left|\E\left[
\mathscr B_t\big(\phi_{\Sigma}(X_t),\phi_{\Sigma}(X_t)\big)
-\mathscr B_{t_i}\big(\phi_{\Sigma}(X_{t_i}),
\phi_{\Sigma}(X_{t_i})\big)
\right]\right|
\le C(t-t_i)^{\alpha-1},
\end{split}
\end{equation}
where the functionals
\(\mathscr A_t\), \(\mathscr B_t\), and \(\mathscr C_t\) are defined in
\eqref{eq:ABCfunctionals}.
\end{proposition}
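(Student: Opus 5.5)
The plan is to treat each line of \eqref{eq:one-step} as a weak temporal increment over $[t_i,t]$ of a functional $\Psi(s,X_s,Y_s^m)$. Here $\Psi$ is built from $\partial_yv,\partial_{xy}v,\partial_{yy}v$ tested against a frozen direction $h=\phi_G(X_{t_i})$, which is $\mathcal F_{t_i}$-measurable; in the last two lines the direction also moves with $X_s$. Conditioning on $\mathcal F_{t_i}$ freezes $h$. I would then apply It\^o's formula on $[t_i,t]$ to $s\mapsto \Psi(s,X_s,Y_s^m;h)$, using the Kolmogorov equation \eqref{eq:Kolu} (in the form $\partial_sv=-(\mathcal L_x+\mathcal L_y)v$, differentiated in $y$ and $x$) to cancel the generator terms. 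What remains is an integral over $[t_i,t]$ of terms involving:
\begin{itemize}
\item commutators produced by differentiating the Kolmogorov equation, namely derivatives of the coefficients $F,\Sigma,\Phi_F,\Phi_\Sigma$ hitting $h$;
\item the mismatch between the dynamics of $Y^m$ and those of $\mathbb Y$, that is, $\mathcal R_F$ and $\mathcal R_\Sigma$, together with the $\mathcal E_v$-type trace terms.
\end{itemize}
Proposition~\ref{prop:varphiY} controls all derivatives of $v$ up to total order four (with $a\le 3$) by polynomial weights. Lemma~\ref{lem:W} and \eqref{eq:generic-flow-moments} give uniform moments. The crude size of $m(S_{t-\ttau}G(X^m_{\ttau})-G(X_t))$ is $O(1)$ in $L^p$ by \eqref{Strongorder-Galerkin}. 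Together these make every integrand $O(1)$, which would already give a bound $C(t-t_i)$, better than $(t-t_i)^{\alpha-1}$, provided all terms make sense with $h$ measured only in $\cH$.

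The difficulty is that $h=\phi_G(X_{t_i})=AG(X_{t_i})-DG(X_{t_i})AX_{t_i}$ belongs only to $\cH^{\alpha-1}$, via \eqref{eq:Galerkin-Fz} and \eqref{eq:Xalp-Galerkin}. The drift term $\langle Ay,\partial_y\Psi\rangle$ in It\^o's formula applied to $\partial_yv(\cdot)h$ produces, after the Kolmogorov cancellation, a term of the form $\partial_yv(Ah)$, which requires $h\in\cH^1$; this is the obstruction mentioned in Remark~\ref{rem:Aterm}. To avoid it, I would transport the direction backward: replace $h$ by $h_s:=S_{s-t_i}h$ (or $S_{-(s-t_i)}h$, whichever sign makes $\frac{\ud}{\ud s}h_s$ cancel the $A$-term). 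Then $\partial_s[\partial_yv(s,X_s,Y^m_s)h_s]$ contains no $Ah$ term, and all remaining terms involve $h_s$ only through $\cH$-norms, using the direction estimates \eqref{eq:DG1}--\eqref{eq:DjG0}. The increment then splits as
\[
\E[\Psi(t;h)-\Psi(t_i;h)]=\E[\Psi(t;h)-\Psi(t;h_t)]+\E[\Psi(t;h_t)-\Psi(t_i;h)].
\]
The second part is $O(t-t_i)$ by the It\^o argument above. The first part is bounded by polynomial weights times $\|(I-S_{t-t_i})h\|_{\cH}\le C(t-t_i)^{\alpha-1}\|h\|_{\cH^{\alpha-1}}$, using \eqref{eq:St-IN} with $\beta=\alpha-1$. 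This first part is the source of the exponent $\alpha-1$.

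The same scheme handles the $\mathscr C$ and $\mathscr B$ lines: the directions $\Sigma(X_s)Q^{1/2}e_k$ and $\Phi_\Sigma(X_s,Y^m_s)$ are themselves processes, so I expand them by It\^o as well. For the last line, $\phi_\Sigma(X_t)-S_{t-t_i}\phi_\Sigma(X_{t_i})$ must be estimated weakly. I would write $\phi_\Sigma(X_t)=A\Sigma(X_t)-D\Sigma(X_t)AX_t$ and use the mild form $X_t=S_{t-t_i}X_{t_i}+\dots$, so that the unbounded parts cancel against the transport. This would be the main obstacle: the $\mathscr L_2(H_Q,\cH)$-valued direction loses one derivative, and only $\cH^{\alpha-1}$ regularity, stochastic increment estimates such as \eqref{Holderq-Galerkin}, and cancellation via transport are available. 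My first attempt would be to bound $\|\phi_\Sigma(X_t)-S_{t-t_i}\phi_\Sigma(X_{t_i})\|_{L^p(\Omega,\mathscr L_2(H_Q,\cH))}\le C(t-t_i)^{\alpha-1}$ strongly, via \eqref{eq:Lip}-type bounds in $\cH^{\alpha}$ interpolated with the $\cH^1$ Lipschitz estimate \eqref{eq:Phi-limit-Lipschitz}. If that fails, I would fall back on a weak expansion using the fourth-order mixed derivatives in Proposition~\ref{prop:varphiY}.
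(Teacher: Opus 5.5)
Your treatment of the first and fourth lines, where both directions are frozen, is correct and matches the paper's mechanism up to the choice of anchor. The sign $h_s=S_{s-t_i}h$ is the right one, since you need $\frac{\ud}{\ud s}h_s=Ah_s$ to cancel $\langle Ah_s,\partial_yv\rangle$. You then pay $\|(I-S_{t-t_i})h\|_{\cH}\le C(t-t_i)^{\alpha-1}\|h\|_{\cH^{\alpha-1}}$ at $s=t$, whereas the paper uses $S_{s-t}h$ and pays at $s=t_i$.

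There is a genuine gap in the lines whose directions depend on the state: $\Sigma(X_s)$ in the $\mathscr C$-line, $\Phi_\Sigma(X_s,Y_s^m)$ in the third line, and $\phi_\Sigma(X_s)$ in the fifth. ``Expanding them by It\^o as well'' brings back the obstruction of Remark~\ref{rem:Aterm}, now in other slots. Take $\mathrm{tr}_Q(\Sigma(x)^*\partial_{xy}v(s,x,y)h_s)$. The differentiated Kolmogorov equation contributes $-\mathrm{tr}_Q\big((A\Sigma(x))^*\partial_{xy}v\,h_s\big)$, and the drift $Ax$ acting on $\Sigma(x)$ contributes $\mathrm{tr}_Q\big((D\Sigma(x)Ax)^*\partial_{xy}v\,h_s\big)$. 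After the $Ah_s$ terms cancel you are left with $-\mathrm{tr}_Q(\phi_\Sigma(x)^*\partial_{xy}v\,h_s)$. Here the $x$-slot of $\partial_{xy}v$ holds a direction lying only in $\cH^{\alpha-1}$, but Proposition~\ref{prop:varphiY} controls $\partial_{xy}v$ only for $x$-directions in $\cH^1$. For $\Phi_\Sigma$ and $\phi_\Sigma$ it is worse: for instance $D\Sigma(x)Ay$ appears, which needs $y\in\cH^1$, while $Y_s^m$ is bounded only in $\cH$.

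Your first attempt at the fifth line also fails when $\alpha>3/2$. The difference $\phi_\Sigma(X_t)-\phi_\Sigma(X_{t_i})$ contains $-D\Sigma(X_{t_i})A(X_t-X_{t_i})$, which sees $[X_t-X_{t_i}]_2$. The stochastic convolution makes this generically of order $(t-t_i)^{1/2}$ in $L^p$, not $(t-t_i)^{\alpha-1}$; this is the same $\min\{\alpha-1,\frac12\}$ barrier as in Lemma~\ref{lem:ACLW} and Remark~\ref{rem:Erem}. Your fallback weak expansion is not specified, so these lines remain unproved.

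For the $\mathscr C$-line alone there is a cheap repair. You could freeze $\Sigma(X_s)$ at $\Sigma(X_{t_i})$ at cost $O(t-t_i)$ by \eqref{Holderq-Galerkin}, since $\Sigma$ depends only on the first component, and then transport it as a frozen direction. For the third and fifth lines some genuine cancellation is unavoidable.

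The missing idea is to transport the state-dependent fields by conjugation, in the argument as well as in the value:
\[
\Sigma_{s,t}(x)=S_{s-t}\Sigma(S_{t-s}x),\quad
\phi_{\Sigma,s,t}(x)=S_{s-t}\phi_\Sigma(S_{t-s}x),\quad
\Phi_{\Sigma,s,t}(x,y)=S_{s-t}\Phi_\Sigma(S_{t-s}x,S_{t-s}y).
\]
Since $S_r$ commutes with $A$, each field satisfies $(\partial_s+\mathcal L^A)G_{s,t}=AG_{s,t}$, where $\mathcal L^A\Theta=D_x\Theta\,Ax+D_y\Theta\,Ay$. The product rule therefore produces $A$-terms that cancel exactly against $-\partial_{xy}v(Aq,k)-\partial_{xy}v(q,Ak)$ and $-\partial_{yy}v(Aq,k)-\partial_{yy}v(q,Ak)$ from the differentiated Kolmogorov equation.

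What remains is $D_y(\mathcal L^0v)$, $D_{xy}(\mathcal L^0v)$ and $D_{yy}(\mathcal L^0v)$, with $\mathcal L^0=\mathcal L_x+\mathcal L_y-\mathcal L^A$, plus the $(\mathscr L_{m,s}-\mathcal L^A)$ part. All of these are $O(1)$ in $L^1(\Omega)$ by Proposition~\ref{prop:varphiY} and \eqref{eq:fsigma-moment}, which gives the $C(t-t_i)$ contribution.

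The endpoint comparison then involves only deterministic group increments at one fixed random state, such as $(S_{t_i-t}-I)\phi_\Sigma(S_{t-t_i}X_{t_i})$, $\phi_\Sigma(S_{t-t_i}X_{t_i})-\phi_\Sigma(X_{t_i})$ and $(S_{t-t_i}-I)Y_{t_i}^m$. These are $O((t-t_i)^{\alpha-1})$ by \eqref{eq:St-IN}, \eqref{eq:Phi-limit-Lipschitz}, \eqref{eq:Xalp-Galerkin}, $\|Y^m_{t_i}\|_{L^p(\Omega,\cH^1)}\le Cm^{2-\alpha}$ from Lemma~\ref{lem:ACLW}, and $t-t_i\le T/m$. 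No stochastic increment of $X$ or $Y^m$ ever has to be estimated.
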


We defer the lengthy proof of Proposition~\ref{prop:one-step}
to Section~\ref{S:one-step-proof}.
To estimate $\mathcal E_{\rm rem}$ defined in \eqref{eq:Erem},
we further separate the stochastic integral terms on the right hand side of \eqref{eq:tildeRG} coming from
$Y_{\ttau}^m-Y_t^m$ and $X_t-X_{\ttau}$.  Define
\begin{gather}\label{eq:Zi}
\mathcal Z(t):=
-m\int_{\ttau}^tS_{t-{\ttau}}\Sigma(X_{\ttau}^m)\ud W_s
+\frac T2\int_{\ttau}^tAS_{t-s}\Sigma(X_s)\ud W_s,\quad t\in[0,T]
\\\label{eq:Rst}
\mathcal R_{G,\mathrm{st}}(t):=DG(X_t)\mathcal Z(t),\qquad \mathcal R_{G,\mathrm{reg}}(t):=
\widetilde{\mathcal R}_G(t)-\mathcal R_{G,\mathrm{st}}(t),
\end{gather} 
where $G\in\{F,\Sigma\}$.
This decomposition separates the regular part from the explicit
stochastic-integral part, which have different convergence orders, as
shown in the following lemma.
\begin{lemma}\label{lem:RGXY}
Let Assumptions \ref{asp:F}, \ref{asp:alp}, and
\ref{asp:fg3} hold and let \({x_0}\in\cH^\alpha\).
Then, for every \(p\ge2\), there exists
\(C>0\), independent of \(m\) and \(N\), such that, for
\(t\in[0,T]\),
\begin{equation}\label{eq:Rreg-Rst-bounds}
\begin{aligned}
&\|\mathcal R_{F,\mathrm{reg}}(t)\|_{L^p(\Omega,\cH)}
+\|\mathcal R_{\Sigma,\mathrm{reg}}(t)\|_
 {L^p(\Omega,\mathscr L_2(H_Q,\cH))}
\le Cm^{-(\alpha-1)},\\
&\|\mathcal R_{F,\mathrm{st}}(t)\|_{L^p(\Omega,\cH)}
+\|\mathcal R_{\Sigma,\mathrm{st}}(t)\|_
 {L^p(\Omega,\mathscr L_2(H_Q,\cH))}
\le Cm^{-\frac12}.
\end{aligned}
\end{equation}
\end{lemma}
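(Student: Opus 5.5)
The plan is to treat the two bounds in \eqref{eq:Rreg-Rst-bounds} separately. Both rest on one structural fact from Lemma~\ref{lem:FSigN}(iii): $DG(z)h$ is controlled by $\|h_1\|_H$ through \eqref{eq:DG1}, and $D^2G(z)(h,k)$ is controlled in $\cH$ (resp.\ $\mathscr L_2(H_Q,\cH)$) by $\|h_1\|_H\|k_1\|_H$ through \eqref{eq:DjG0}. So whenever a direction is hit by $DG$ or by a difference of $DG$'s, only its \emph{first} (displacement) component in $H$ has to be estimated. Throughout, we use the moment bounds of Lemma~\ref{lem:W} (in particular \eqref{eq:Xalp-Galerkin} and \eqref{Strongorder-Galerkin}), the temporal estimate \eqref{Holderq-Galerkin}, \eqref{eq:convolution-bounds}, and H\"older's inequality to split products of random factors.

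\emph{Stochastic part.} Since $\mathcal R_{G,\mathrm{st}}(t)=DG(X_t)\mathcal Z(t)$, it suffices to show $\|[\mathcal Z(t)]_1\|_{L^p(\Omega,H)}\le Cm^{-1/2}$.
\begin{itemize}
\item By \eqref{eq:St}, the first component of $S_{t-\ttau}\Sigma(X^m_{\ttau})q$ is $\Lambda^{-1/2}\sin((t-\ttau)\Lambda^{1/2})\sigma(\cdot)q$. By \eqref{eq:sinop} with $\beta=1$, its $\mathscr L_2(H_Q,H)$-norm is at most $C(t-\ttau)\|\Sigma(X^m_{\ttau})\|_{\mathscr L_2(H_Q,\cH^1)}$.
\item The Burkholder--Davis--Gundy inequality over an interval of length $t-\ttau\le T/m$ then bounds the first summand of $\mathcal Z$ by $Cm\cdot m^{-1}\cdot m^{-1/2}=Cm^{-1/2}$.
\item The first component of $AS_{t-s}\Sigma(X_s)q$ is $\cos((t-s)\Lambda^{1/2})\sigma q$. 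Its $\mathscr L_2(H_Q,H)$-norm is bounded via \eqref{eq:lin}, so the second summand is also $O((t-\ttau)^{1/2})=O(m^{-1/2})$.
\end{itemize}

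\emph{Regular part.} Here we go through \eqref{eq:tildeRG} line by line.
\begin{itemize}
\item Write $S_s-I-sA=\int_0^s(S_r-I)A\,\ud r$ and apply \eqref{eq:St-IN} with $\beta=\alpha-1$ and \eqref{eq:Abound}. This gives $\|m(S_{t-\ttau}-I-(t-\ttau)A)G(X^m_{\ttau})\|_{\cH}\le Cm\tau^{\alpha}\|G(X^m_{\ttau})\|_{\cH^\alpha}$. By \eqref{eq:Galerkin-Fz} and \eqref{eq:Xalp-Galerkin} this is $O(m^{1-\alpha})$; the same argument handles the analogous $X_{\ttau}$ term in the last line.
\item For the terms $\vartheta_m A(G(X^m_{\ttau})-G(X_{\ttau}))$ and $\frac T2A(G(X^m_{\ttau})-G(X_t))$, use $\|Ah\|_{\cH}=\|h\|_{\cH^1}$ together with \eqref{eq:Galerkin-Lip}. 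This reduces them to $\|X^m_{\ttau}-X_{\ttau}\|_{\cH}+\|[X_{\ttau}-X_t]_1\|_H=O(m^{-1})$ by \eqref{Strongorder-Galerkin} and \eqref{Holderq-Galerkin}.
\item The mean-value terms with $DG(\cdot)-DG(X_t)$ are bounded by \eqref{eq:DjG0} with $j=2$. They contribute products such as $\|[X^m_{\ttau}-X_t]_1\|_H\|Y^m_{\ttau}\|_{\cH}$, $m\|[X_t-X_{\ttau}]_1\|_H^2$, and $\|[X_{\ttau}-X_t]_1\|_H\|AX_{\ttau}\|_{\cH}$, each of which is $O(m^{-1})$.
\end{itemize}

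The remaining pieces are $DG(X_t)(Y^m_{\ttau}-Y^m_t)$, $\frac T2DG(X_t)A(X_t-X_{\ttau})$, and $-DG(X_t)m(X_t-S_{t-\ttau}X_{\ttau})$. Individually they contain $O(1)$ drift contributions, so I would expand all of them with the mild formulas on $[\ttau,t]$. For the exact solution,
\[
X_t-S_{t-\ttau}X_{\ttau}=\int_{\ttau}^tS_{t-s}F(X_s)\,\ud s+\int_{\ttau}^tS_{t-s}\Sigma(X_s)\,\ud W_s .
\]
For the scheme, the analogous identity holds with $S_{t-\ttau}F(X^m_{\ttau})$ and $S_{t-\ttau}\Sigma(X^m_{\ttau})$. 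Also $A(X_t-X_{\ttau})=(S_{t-\ttau}-I)AX_{\ttau}+A\int\cdots$.

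After this expansion, the $m\int S F\,\ud s$ contributions should cancel between $Y^m$ and $X$ up to $m\tau\,\|[S_{t-s}F(X_s)-S_{t-\ttau}F(X^m_{\ttau})]_1\|_H$. Since $[S_rF(z)]_1=\Lambda^{-1/2}\sin(r\Lambda^{1/2})f(z_1)$, this is $O(\tau)$ by \eqref{eq:sinop}. The exactly matching stochastic integrals $-m\int S_{t-\ttau}\Sigma(X^m_{\ttau})\,\ud W$ and $\frac T2\int AS_{t-s}\Sigma(X_s)\,\ud W$ are precisely what was moved into $\mathcal Z$. What is left are differences such as
\[
m\int_{\ttau}^t\bigl[S_{t-s}\Sigma(X_s)-S_{t-\ttau}\Sigma(X_{\ttau})\bigr]\ud W_s .
\]
The first components of these carry an extra factor $\Lambda^{-1/2}\sin$ together with the Lipschitz or time-increment smallness. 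I expect them to be $O(m\cdot\tau\cdot\tau^{1/2}\cdot\tau^{\alpha-1})$ or better by \eqref{eq:St-IN}, which suffices since this is at most $Cm^{-(\alpha-1)}$.

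The main obstacle is this last bookkeeping step: verifying that every $O(1)$ drift and stochastic piece either cancels or belongs to $\mathcal Z$. I would handle it by writing $Y^m_t-Y^m_{\ttau}$ explicitly from \eqref{eq:Galerkin-systemYm} in mild form on $[\ttau,t]$, grouping its terms against $m(X_t-S_{t-\ttau}X_{\ttau})$ and $\frac T2A(X_t-X_{\ttau})$, and estimating only first components in $H$ throughout.
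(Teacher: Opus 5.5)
Your outline follows the paper's route: first components only via \eqref{eq:DG1}/\eqref{eq:DjG0}, BDG for $[\mathcal Z(t)]_1$, and term-by-term bounds on \eqref{eq:tildeRG}. Your stochastic part and your first three regular-part bullets are fine.

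\textbf{The bookkeeping step and the predicted residual.} The gap is in the step you flag as the main obstacle, and your forecast of its outcome is wrong in a way that matters. You predict leftover terms such as $m\int_{\ttau}^t[S_{t-s}\Sigma(X_s)-S_{t-\ttau}\Sigma(X_{\ttau})]\,\ud W_s$ and expect them to be $O(m\tau^{3/2}\tau^{\alpha-1})$. That bound is false. The first component contains $\Lambda^{-1/2}[\sin((t-s)\Lambda^{1/2})-\sin((t-\ttau)\Lambda^{1/2})]\sigma([X_s]_1)q$, which on low modes is $\approx-(s-\ttau)\sigma([X_s]_1)q$. There is no extra factor $\tau^{\alpha-1}$, so BDG gives only $m\tau^{3/2}\sim m^{-1/2}$. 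That is exactly the insufficient rate of Remark~\ref{rem:Erem}: if such a term survived in $\mathcal R_{G,\mathrm{reg}}$, the first bound in \eqref{eq:Rreg-Rst-bounds} would fail for $\alpha>3/2$.

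What actually saves the lemma is an exact identity. The mild forms on $[\ttau,t]$ give
\[
Y_t^m-Y_{\ttau}^m=(S_{t-\ttau}-I)Y_{\ttau}^m+m(t-\ttau)S_{t-\ttau}F(X_{\ttau}^m)+m\int_{\ttau}^tS_{t-\ttau}\Sigma(X_{\ttau}^m)\,\ud W_s-m\big(X_t-S_{t-\ttau}X_{\ttau}\big).
\]
Hence $DG(X_t)(Y^m_{\ttau}-Y^m_t)$ cancels the term $-mDG(X_t)(X_t-S_{t-\ttau}X_{\ttau})$ of \eqref{eq:tildeRG} identically, drift and noise together. After expanding $\frac T2DG(X_t)A(X_t-X_{\ttau})$, the only stochastic integrals left are exactly $DG(X_t)\mathcal Z(t)$. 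So $\mathcal R_{G,\mathrm{reg}}$, cf.\ \eqref{eq:Rreg-explicit}, contains no stochastic integral at all.

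No approximate drift cancellation is needed either. $DG$ sees only first components, and $[S_rF(z)]_1=\Lambda^{-1/2}\sin(r\Lambda^{1/2})f(z_1)=O(r)$, so each drift piece is already $O(m^{-1})$. Your remark that these pieces are individually $O(1)$ is therefore inaccurate.

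\textbf{The missing estimate for $Y^m$.} The identity leaves the term $-DG(X_t)(S_{t-\ttau}-I)Y^m_{\ttau}$, which the tools you list cannot control. With only \eqref{Strongorder-Galerkin}, $\|[(S_{t-\ttau}-I)Y^m_{\ttau}]_1\|_H$ has no smallness uniformly in $N$, because the high modes get none. Also, \eqref{eq:Xalp-Galerkin} only yields $\|Y^m_{\ttau}\|_{L^p(\Omega,\cH^1)}\le Cm$, which gives $O(1)$ here.

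The paper uses the additional regularity estimate $\|Y^m_t\|_{L^p(\Omega,\cH^1)}\le Cm^{2-\alpha}$ from Lemma~\ref{lem:ACLW}. Combined with \eqref{eq:St-IN} ($\beta=1$), this gives $(t-\ttau)m^{2-\alpha}\le Cm^{-(\alpha-1)}$; see \eqref{eq:St1}. The companion term $\frac T2DG(X_t)A(S_{t-\ttau}-I)X_{\ttau}$ needs \eqref{eq:St-IN} in $\cH^1$ with $\beta=\alpha-1$ together with \eqref{eq:Xalp-Galerkin}, as in \eqref{eq:St2}.

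With the exact cancellation and these two estimates in place, the bounds you already give close the argument.
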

The proof of Lemma \ref{lem:RGXY} can be found in
Section \ref{S:remainder-proofs}. 
\begin{remark}\label{rem:Erem}

Separating the stochastic remainder terms, as in \eqref{eq:Rst}, is essential for proving Theorem~\ref{tho:Galerkin-main}.
In fact, direct use of the moment estimates in
Lemma~\ref{lem:RGXY} for $\widetilde{\mathcal R}_F(t)$ and
$\widetilde{\mathcal R}_\Sigma(t)$ yields only an
$O(m^{-\min\{\alpha-1,\frac12\}})$ bound for $\mathcal E_{\rm rem}$,
which is insufficient to establish
$|\mathcal E_{\rm rem}|\le Cm^{-(\alpha-1)}$ when $\alpha>3/2$.

\end{remark}
By \(\widetilde{\mathcal R}_G
=\mathcal R_{G,\mathrm{reg}}+\mathcal R_{G,\mathrm{st}}\), we decompose $
\mathcal E_{\rm rem}
=\mathcal E_{\rm rem}^{(\mathrm{reg})}
+\mathcal E_{\rm rem}^{({\rm st})}
+\mathcal E_{\rm rem}^{(2)}$,
where
\begin{align}\label{eq:Erem-reg}
\mathcal E_{\rm rem}^{(\mathrm{reg})}
&:=\E\int_0^T\Big[
\mathscr A_t(\mathcal R_{F,\mathrm{reg}}(t))
+\mathscr C_t(\mathcal R_{\Sigma,\mathrm{reg}}(t))\notag\\
&\quad\qquad+\mathscr B_t\Big(
\mathcal R_{\Sigma,\mathrm{reg}}(t),
\Phi_\Sigma(X_t,Y_t^m)
+\vartheta_m(t)\phi_\Sigma(X_{\ttau})\Big)
\Big]\ud t,\\
\label{eq:Erem-st}
\mathcal E_{\rm rem}^{({\rm st})}
&:=\E\int_0^T\Big[
\mathscr A_t(\mathcal R_{F,\mathrm{st}}(t))
+\mathscr C_t(\mathcal R_{\Sigma,\mathrm{st}}(t))\notag\\
&\quad\qquad+\mathscr B_t\Big(
\mathcal R_{\Sigma,\mathrm{st}}(t),
\Phi_\Sigma(X_t,Y_t^m)
+\vartheta_m(t)\phi_\Sigma(X_{\ttau})\Big)
\Big]\ud t,\\
\label{eq:Erem-second}
\mathcal E_{\rm rem}^{(2)}
&:=\frac12\E\int_0^T
\mathscr B_t\big(\widetilde{\mathcal R}_\Sigma(t),
\widetilde{\mathcal R}_\Sigma(t)\big)\ud t.
\end{align}
The following proposition provides the estimate for
$\mathcal E_{\rm rem}^{(\mathrm{st})}$.
\begin{proposition}\label{prop:Erem}
Let Assumptions \ref{asp:F}, \ref{asp:alp}, and \ref{asp:fg3} hold,
let ${x_0}\in\cH^\alpha$, and let \({\varphi}\in \mathscr T^4\).
Then there exists
$C>0$, independent of \(m\) and \(N\), such that, for
\(t\in[t_i,t_{i+1}]\) and \(i=0,1,\ldots,m-1\),
\begin{align}\label{eq:Erem-bound}\notag
&|\E[\mathscr A_t(\mathcal R_{F,\mathrm{st}}(t))]|
+|\E[\mathscr C_t(\mathcal R_{\Sigma,\mathrm{st}}(t))]|\\
&\quad+\left|\E\left[\mathscr B_t\Big(
\mathcal R_{\Sigma,\mathrm{st}}(t),
\Phi_\Sigma(X_t,Y_t^m)
+\vartheta_m(t)\phi_\Sigma(X_{t_i})\Big)\right]\right|
\le Cm^{-(\alpha-1)},
\end{align}
where the functionals
\(\mathscr A_t\), \(\mathscr B_t\), and \(\mathscr C_t\) are defined in
\eqref{eq:ABCfunctionals}, and $\mathcal R_{\cdot,\mathrm{st}}(t)$  is defined in \eqref{eq:Rst}.
\end{proposition}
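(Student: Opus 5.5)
The bound in Proposition~\ref{prop:Erem} will come from conditional cancellation, not from moment bounds. Lemma~\ref{lem:RGXY} only gives $\mathcal R_{G,\mathrm{st}}(t)=O(m^{-1/2})$, but $\mathcal Z(t)$ is a sum of It\^o integrals over $[t_i,t]$ with $\mathcal F_{t_i}$-measurable or adapted integrands, so $\E[\mathcal Z(t)\,|\,\mathcal F_{t_i}]=0$. The plan is to freeze every other factor at $t_i$ and to expand the error of this freezing to first order.

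\textbf{Step 1 (freezing).} For each of the three terms I write the quantity as a bilinear expression $\E[\Gamma_t(\mathcal Z(t))]$. For the first term,
\[
\Gamma_t(h)=\langle DF(X_t)h,\partial_yv(t,X_t,Y_t^m)\rangle,
\]
and the $\mathscr C$ and $\mathscr B$ terms have analogous $\Gamma_t$ with $D\Sigma(X_t)$, $\partial_{xy}v$, $\partial_{yy}v$, $\Sigma(X_t)$ and $\Phi_\Sigma(X_t,Y_t^m)+\vartheta_m(t)\phi_\Sigma(X_{t_i})$. I then replace $\Gamma_t$ by $\Gamma_{t_i}^{\flat}$, in which every random argument $X_t,Y_t^m$ is replaced by $X_{t_i},Y_{t_i}^m$ and the time argument of $v$ by $t_i$. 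The deterministic factor $\vartheta_m(t)$ is kept as it is. Since $\Gamma^\flat_{t_i}$ is $\mathcal F_{t_i}$-measurable and linear in $h$, conditioning gives $\E[\Gamma^\flat_{t_i}(\mathcal Z(t))]=0$. The polynomial moment bounds \eqref{eq:generic-flow-moments}, Lemma~\ref{lem:W} and Proposition~\ref{prop:varphiY} justify the integrability.

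\textbf{Step 2 (first-order expansion of the freezing error).} I write $\Gamma_t-\Gamma^\flat_{t_i}$ by the mean value theorem in $(s,x,y)$. Its main parts are linear in $X_t-X_{t_i}$ and $Y_t^m-Y^m_{t_i}$, each paired with a higher derivative of $v$ or of $G$.

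\emph{Time derivative of $v$.} The time increment of $v$ is $O(t-t_i)$ once I use \eqref{eq:Kolu}, but $\partial_t v$ involves $Ax$. I therefore avoid differentiating in $t$ directly: $v(t,\cdot)$ and $v(t_i,\cdot)$ are both compared along the flow, which is equivalent to Step 2 applied to the process increments.

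\emph{Increments of $X$ and $Y^m$.} I split each increment into a martingale part and a bounded-variation part. The drift parts contribute $O(m^{-1})\cdot O(m^{-1/2})$ after multiplying by $m$ where needed, which is harmless. The martingale parts are $\int_{t_i}^t S_{t-s}\Sigma(X_s)\,\ud W_s$ for $X$, and for $Y^m$ the term $m\int_{t_i}^t(\ldots)\,\ud W_s$, which is $O(1)\cdot m^{-1/2}$ in $L^p$. Paired with $\mathcal Z(t)$, these give It\^o-isometry products of size $(t-t_i)\cdot$(integrand sizes).

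\emph{Sizes of the products.} The dangerous piece is the $m$-amplified part of $\mathcal Z$ against the $m$-amplified part of $Y^m$. Its covariance is $m^2(t-t_i)=O(m)$ times a trace, multiplied by a derivative of $v$ and by $D^2G$. This is where the factor $m(t-t_i)\le T$ keeps it $O(1)$ in magnitude per unit time, which is not yet small. I handle it by noting that this covariance is exactly one of the trace terms already accounted for in $\mathcal E_v$ and \eqref{eq:Ly}. More concretely, I expect the covariance to have the form $\mathrm{tr}_Q$ of $D^2G(X_{t_i})$ applied to $(\Sigma,\,S\Sigma-\Sigma)$. In that case it vanishes to order $(t-t_i)^{\alpha-1}$, because $S_{t-s}\Sigma(X_{t_i}^m)-\Sigma(X_s)$ is small in $\cH$ by \eqref{eq:St-IN} with $\beta=\alpha-1$ together with \eqref{eq:Galerkin-Fz} and Lemma~\ref{lem:RGXY}.

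\emph{The $\frac T2A S_{t-s}\Sigma$ part.} This part of $\mathcal Z$ loses one derivative, but $\Sigma(X_s)\in\mathscr L_2(H_Q,\cH^\alpha)$. Hence $A\Sigma$ lies in $\cH^{\alpha-1}\subset\cH$, and the pairing is controlled by the $\cH$-bounds of Proposition~\ref{prop:varphiY} via \eqref{eq:DG1}–\eqref{eq:DjG0} and \eqref{eq:trace}.

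\textbf{Step 3 (remainders).} The second-order Taylor remainders are products of two increments, each $O(m^{-1/2})$, with $\mathcal Z=O(m^{-1/2})$. Here \eqref{Holderq-Galerkin} gives the stronger $O(m^{-1})$ for the first component, and the bounds \eqref{eq:DjG0} only require $\|h_1\|_H$ for two directions. These remainders are therefore $O(m^{-1})\le Cm^{-(\alpha-1)}$.

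The main obstacle is Step 2: the cross-variation between $\mathcal Z$ and the normalized increment $Y_t^m-Y_{t_i}^m$, whose $m^2$ factor must be shown to produce only an $O(m^{-(\alpha-1)})$ bound. I would first compute this quadratic covariation explicitly and use the group estimate \eqref{eq:St-IN} to extract the factor $(t-t_i)^{\alpha-1}$.
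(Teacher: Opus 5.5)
\textbf{The gap is in Step 2.} Your Step~1 is the mechanism the paper uses: freeze the $\mathscr F_{t_i}$-measurable coefficients and use that $\mathcal Z(t)$ is a sum of It\^o integrals on $[t_i,t]$. Step~2, however, does not close. You dispose of the term you call dangerous only by a conjecture (``I expect the covariance to have the form \dots''). You also appeal to the trace terms in $\mathcal E_v$ and \eqref{eq:Ly}, which cannot help: each of the three expectations in \eqref{eq:Erem-bound} must be bounded on its own, and there is nothing left to cancel against.

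\textbf{The obstacle comes from an overcount.} No covariance computation or second-order expansion is needed. Write each integrand as $\Theta(X_t,Y_t^m,\mathcal Z(t))$, with $\Theta(x,y,z)$ linear in $z$ and $v(t,\cdot,\cdot)$ kept at time $t$. Since $DF(x)z$ and $D\Sigma(x)z$ depend only on $z_1$, \eqref{eq:DG1}, \eqref{eq:DjG0} and Proposition~\ref{prop:varphiY} give $|D_x\Theta(x,y,z)l|+|D_y\Theta(x,y,z)h|\le C(1+\|x\|_{\cH^1}+\|y\|_{\cH})^r(\|l\|_{\cH^1}+\|h\|_{\cH})\|z_1\|_H$. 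One mean-value step and H\"older's inequality then yield
\[
|\E\,\Theta(X_t,Y_t^m,\mathcal Z(t))|
\le C\bigl(\|X_t-X_{t_i}\|_{L^p(\Omega,\cH^1)}+\|Y_t^m-Y_{t_i}^m\|_{L^p(\Omega,\cH)}\bigr)\|[\mathcal Z(t)]_1\|_{L^p(\Omega,H)}
\le Cm^{-\min\{\alpha-1,\frac12\}-\frac12},
\]
by \eqref{lem:XYreg} and \eqref{eq:Zi-first}. This is at most $Cm^{-(\alpha-1)}$ because $\alpha\le2$.

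Your own size bounds already show this. The martingale part of $Y_t^m-Y_{t_i}^m$ is $O(m^{-1/2})$, and so is $[\mathcal Z(t)]_1$. Indeed, the $m$-amplified part of $\mathcal Z(t)$ has first component of size $O(m(t-t_i)^{3/2})$, because $[S_{t-t_i}\Sigma(X_{t_i}^m)q]_1=\Lambda^{-1/2}\sin((t-t_i)\Lambda^{1/2})[\Sigma(X_{t_i}^m)q]_2$. Cauchy--Schwarz therefore gives $O(m^{-1})$, not the $m^2(t-t_i)$ you assert.

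\textbf{Do not freeze the time argument of $v$.} Since $v(t,\cdot,\cdot)$ is deterministic, $\Theta(X_{t_i},Y_{t_i}^m,\cdot)$ is already $\mathscr F_{t_i}$-measurable. Freezing time at $t_i$ creates the $\partial_t v$ term you worried about. That term contains $\partial_{xy}v(Ax,\cdot)$, which is not controlled uniformly in $N$ when $\alpha<2$, and your ``comparison along the flow'' is not an argument. The time increment is actually harmless: the difference of the two frozen functionals is $\mathscr F_{t_i}$-measurable, so it also has zero expectation against $\mathcal Z(t)$. It is simpler to avoid it altogether.

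\textbf{Minor correction.} The drift part of $Y_t^m-Y_{t_i}^m$ is only $O(m^{-(\alpha-1)})$, not $O(m^{-1})$. This comes from $(S_{t-t_i}-I)Y_{t_i}^m$ together with $\|Y^m\|_{\cH^1}\le Cm^{2-\alpha}$. It remains harmless after multiplying by $\|[\mathcal Z(t)]_1\|_{L^p(\Omega,H)}=O(m^{-1/2})$.
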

The proof of Proposition \ref{prop:Erem} is given in
Section \ref{S:remainder-proofs}.
We now combine the preceding estimates to prove
Theorem~\ref{tho:Galerkin-main}.

\begin{proof}[Proof of Theorem \ref{tho:Galerkin-main}]
For \(t\in[0,T]\), Proposition \ref{prop:varphiY},
\eqref{eq:Galerkin-lin}, \eqref{eq:Phi-growth},
\eqref{eq:LpX-Galerkin}, and
\eqref{Strongorder-Galerkin} imply
\begin{align}\label{eq:partvt}
&\left\|\partial_yv(t,X_t,Y_t^m)\right\|_
 {L^4(\Omega,\mathscr L(\cH,\R))}
+\left\|\partial_{xy}v(t,X_t,Y_t^m)\right\|_
 {L^4(\Omega,\mathscr L(\cH^1\times\cH,\R))}\notag\\
&\quad
+\left\|\partial_{yy}v(t,X_t,Y_t^m)\right\|_
 {L^4(\Omega,\mathscr L(\cH\times\cH,\R))}
+\left\|\Sigma(X_t)\right\|_
 {L^4(\Omega,\mathscr L_2(H_Q,\cH^1))}\notag\\
&\quad
+\left\|\Phi_\Sigma(X_t,Y_t^m)\right\|_
 {L^4(\Omega,\mathscr L_2(H_Q,\cH))}
+\left\|\phi_\Sigma(X_{\ttau})\right\|_
 {L^4(\Omega,\mathscr L_2(H_Q,\cH))}
\le C .
\end{align}
Furthermore, by \eqref{eq:ABCfunctionals} and the Cauchy--Schwarz inequality, we have 
\begin{align*}
\left|\mathscr A_t(h)\right|&\le \|\partial_yv(t,X_t,Y_t^m)\|_{\mathscr L(\cH,\R)}\|h\|_{\cH},
\\
\left|\mathscr C_t(U)\right|
&\le
\left\|\partial_{xy}v(t,X_t,Y_t^m)\right\|_
 {\mathscr L(\cH^1\times\cH,\R)}
\left\|\Sigma(X_t)\right\|_{\mathscr L_2(H_Q,\cH^1)}
\|U\|_{\mathscr L_2(H_Q,\cH)},\\
\left|\mathscr B_t(U,V)\right|&\le \left\|\partial_{yy}v(t,X_t,Y_t^m)\right\|_
 {\mathscr L(\cH\times\cH,\R)}
\left\|U\right\|_{\mathscr L_2(H_Q,\cH)}
\|V\|_{\mathscr L_2(H_Q,\cH)}.
\end{align*}
Consequently, Hölder's inequality, \eqref{eq:partvt},  and the
preceding bound yield
\begin{align*}
&\E\left|\mathscr A_t(\mathcal R_{F,\mathrm{reg}}(t))\right|
\le C\|\mathcal R_{F,\mathrm{reg}}(t)\|_{L^2(\Omega,\cH)},\\
&\E\left|\mathscr C_t(\mathcal R_{\Sigma,\mathrm{reg}}(t))\right|
\le C\|\mathcal R_{\Sigma,\mathrm{reg}}(t)\|_
 {L^2(\Omega,\mathscr L_2(H_Q,\cH))},\\
&\E\left|\mathscr B_t\Big(
\mathcal R_{\Sigma,\mathrm{reg}}(t),
\Phi_\Sigma(X_t,Y_t^m)
+\vartheta_m(t)\phi_\Sigma(X_{\ttau})\Big)\right|
\le C\|\mathcal R_{\Sigma,\mathrm{reg}}(t)\|_
 {L^2(\Omega,\mathscr L_2(H_Q,\cH))},
\end{align*}
Here the last inequality also uses
\(|\vartheta_m(t)|\le T/2\).  Inserting
\eqref{eq:Rreg-Rst-bounds} into these three estimates, we deduce from
\eqref{eq:Erem-reg} that
\begin{equation}\label{eq:Erem-reg-bound}
\big|\mathcal E_{\rm rem}^{(\mathrm{reg})}\big|
\le Cm^{-(\alpha-1)}.
\end{equation}
Moreover,
\(\widetilde{\mathcal R}_\Sigma
=\mathcal R_{\Sigma,\mathrm{reg}}
+\mathcal R_{\Sigma,\mathrm{st}}\), so
Hölder's inequality, \eqref{eq:partvt},
\eqref{eq:trace}, and Lemma \ref{lem:RGXY} yield that for \(t\in[0,T]\),
\begin{align}\label{eq:Erem-second-bound}
\frac12\E\big|\mathscr B_t\big(
\widetilde{\mathcal R}_\Sigma(t),
\widetilde{\mathcal R}_\Sigma(t)\big)\big|
&\le C(
m^{-2(\alpha-1)}
+m^{-(\alpha-\frac12)}
+m^{-1})
\le Cm^{-(\alpha-1)}.
\end{align}
By Proposition \ref{prop:Erem} and
\eqref{eq:Erem-st},
\(
\big|\mathcal E_{\rm rem}^{({\rm st})}\big|
\le Cm^{-(\alpha-1)}.
\)
Similarly, \eqref{eq:Erem-second-bound} and
\eqref{eq:Erem-second} imply
\(
\big|\mathcal E_{\rm rem}^{(2)}\big|
\le Cm^{-(\alpha-1)}.
\)
Consequently, the relation $
\mathcal E_{\rm rem}
=\mathcal E_{\rm rem}^{(\mathrm{reg})}
+\mathcal E_{\rm rem}^{({\rm st})}
+\mathcal E_{\rm rem}^{(2)}$ and
\eqref{eq:Erem-reg-bound} yield
$
|\mathcal E_{\rm rem}|
\le Cm^{-(\alpha-1)}.
$
By Proposition \ref{prop:one-step}, \eqref{eq:Ecent}, and
\(|\vartheta_m(t)|^2\le T^2/4\), we have
\(
|\mathcal E_{\rm cent}|
\le Cm^{-(\alpha-1)}.
\)
Combining the estimates for \(\mathcal E_{\rm rem}\) and
\(\mathcal E_{\rm cent}\) with \eqref{eq:two-blocks} proves the required
\eqref{eq:Galerkin-main-statement}. 
\end{proof}
\subsection{Proofs of Lemma \ref{lem:RGXY} and Proposition \ref{prop:Erem}}
\label{S:remainder-proofs}
In this subsection, we prove the remainder estimates in
Lemma \ref{lem:RGXY} and Proposition \ref{prop:Erem}.  We first derive
uniform moment and one-step increment estimates for the spectral Galerkin
normalized error \(Y^m\), and then use them to prove the two stated
remainder estimates.
\begin{lemma}
\label{lem:ACLW}
Let Assumptions \ref{asp:F} and \ref{asp:alp} hold and let
${x_0}\in\cH^\alpha$.  Then for every $p\ge2$, there exists
$C>0$, independent of $m$ and $N$, such that for any $t\in[0,T]$,
\begin{gather}\label{eq:ACLW-Galerkin}
\| Y_t^m\|_{L^p(\Omega,\cH^1)}
\le Cm^{2-\alpha},\\\label{lem:XYreg}
			\|X_t-X_{\ttau}\|_{L^p(\Omega,\cH^1)}
	+\|Y_t^m-Y_{\ttau}^m\|_{L^p(\Omega,\cH)}
	\le C m^{-\min\{\alpha-1,\frac12\}}.
\end{gather} 
\end{lemma}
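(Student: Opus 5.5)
The first estimate \eqref{eq:ACLW-Galerkin} will follow from a direct mild-form estimate of \(X^m_t-X_t\) in \(\cH^1\); no Gronwall argument is needed. The second estimate \eqref{lem:XYreg} will follow from one-step mild representations on \([\ttau,t]\), combined with \eqref{eq:St-IN}, \eqref{eq:convolution-bounds} and the moment bounds of Lemma~\ref{lem:W}.

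\emph{Step 1: the \(\cH^1\) bound.} Subtracting the mild forms of \eqref{eq:Galerkin-systemX} and of the exponential Euler scheme gives
\[
X^m_t-X_t=\int_0^tS_{t-s}\bigl[S_{s-\underline s}F(X^m_{\underline s})-F(X_s)\bigr]\ud s+\int_0^tS_{t-s}\bigl[S_{s-\underline s}\Sigma(X^m_{\underline s})-\Sigma(X_s)\bigr]\ud W_s .
\]
For \(G\in\{F,\Sigma\}\) I split the integrand as
\[
(S_{s-\underline s}-I)G(X^m_{\underline s})+\bigl(G(X^m_{\underline s})-G(X_s)\bigr).
\]
\begin{itemize}
\item For the first part, \eqref{eq:St-IN} with \(r=1\) and \(\beta=\alpha-1\), together with \eqref{eq:Galerkin-Fz} and \eqref{eq:Xalp-Galerkin}, gives an \(L^p(\Omega,\cH^1)\) (resp.\ \(L^p(\Omega,\mathscr L_2(H_Q,\cH^1))\)) bound \(C\tau^{\alpha-1}\).
\item For the second part, \eqref{eq:Galerkin-Lip} uses only the first component in \(H\). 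We have
\[
\|[X^m_{\underline s}-X_s]_1\|_H\le \|X^m_{\underline s}-X_{\underline s}\|_{\cH}+\|[X_{\underline s}-X_s]_1\|_H .
\]
By \eqref{Strongorder-Galerkin} and \eqref{Holderq-Galerkin}, this is \(O(m^{-1})\) in \(L^p\).
\end{itemize}
Applying \eqref{eq:convolution-bounds} with \(r=1\) then gives \(\|X^m_t-X_t\|_{L^p(\Omega,\cH^1)}\le Cm^{-(\alpha-1)}\). Multiplying by \(m\) yields \eqref{eq:ACLW-Galerkin}.

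\emph{Step 2: the increment of \(X\).} I write
\[
X_t-X_{\ttau}=(S_{t-\ttau}-I)X_{\ttau}+\int_{\ttau}^tS_{t-s}F(X_s)\ud s+\int_{\ttau}^tS_{t-s}\Sigma(X_s)\ud W_s .
\]
The terms are bounded in \(L^p(\Omega,\cH^1)\) as follows:
\begin{itemize}
\item the first term by \(C\tau^{\alpha-1}\|X_{\ttau}\|_{L^p(\Omega,\cH^\alpha)}\), using \eqref{eq:St-IN} with \(\beta=\alpha-1\);
\item the drift term by \(C\tau\), using \eqref{eq:Galerkin-lin} and isometry;
\item the stochastic term by \(C\tau^{1/2}\), using \eqref{eq:convolution-bounds} restricted to \([\ttau,t]\) and \eqref{eq:Galerkin-lin}.
\end{itemize}
Altogether this gives the order \(m^{-\min\{\alpha-1,1/2\}}\).

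\emph{Step 3: the increment of \(Y^m\).} I use the mild form of \eqref{eq:Galerkin-systemYm} on \([\ttau,t]\):
\[
Y^m_t-Y^m_{\ttau}=(S_{t-\ttau}-I)Y^m_{\ttau}+m\int_{\ttau}^tS_{t-s}\{S_{s-\ttau}F(X^m_{\ttau})-F(X_s)\}\ud s+m\int_{\ttau}^tS_{t-s}\{S_{s-\ttau}\Sigma(X^m_{\ttau})-\Sigma(X_s)\}\ud W_s .
\]
\begin{itemize}
\item For the first term, \eqref{eq:St-IN} with \(r=0\), \(\beta=1\) and Step 1 give the bound \(C\tau\, m^{2-\alpha}=Cm^{1-\alpha}\) in \(L^p(\Omega,\cH)\).
\item For the integrands I use the Step 1 splitting again, now in \(\cH\). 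The factor \(m(S_{s-\ttau}-I)G(X^m_{\ttau})\) is bounded by \(Cm\tau\|G(X^m_{\ttau})\|_{\cH^1}\le C(1+\|X^m_{\ttau}\|_{\cH})\), using \(\beta=1\) and \eqref{eq:Galerkin-lin}. The factor \(m(G(X^m_{\ttau})-G(X_s))\) is bounded by \(Cm\|[X^m_{\ttau}-X_s]_1\|_H=O(1)\) by Step 1.
\item Hence the integrands are \(O(1)\) in \(L^p\). The drift integral is therefore \(O(\tau)\), and, by \eqref{eq:convolution-bounds} on \([\ttau,t]\), the stochastic integral is \(O(\tau^{1/2})\).
\end{itemize}
This gives \(\|Y^m_t-Y^m_{\ttau}\|_{L^p(\Omega,\cH)}\le C(m^{1-\alpha}+m^{-1/2})\), which completes \eqref{lem:XYreg}.

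The main point requiring care is Step 1. The extra half derivative must come entirely from the \(\cH^\alpha\) regularity of \(G(X^m_{\ttau})\) through \eqref{eq:Galerkin-Fz} and \eqref{eq:Xalp-Galerkin}. Meanwhile the Lipschitz part must be controlled in \(\cH^1\) using only the \(H\)-norm of displacement differences; here the structure \eqref{eq:Galerkin-Lip} of the Nemytskii coefficients is essential. All constants are uniform in \(N\) because \(S^{(N)}\) is an isometry and \(\mathcal P_N\) is contractive.
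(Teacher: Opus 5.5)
Your proposal is correct and follows the paper's (only sketched) argument: you estimate the mild form of $Y^m=m(X^m-X)$ in $\cH^1$ via \eqref{eq:convolution-bounds}, \eqref{eq:St-IN}, \eqref{eq:Galerkin-Fz}, \eqref{eq:Xalp-Galerkin} and Lemma~\ref{lem:W}, using that the Lipschitz part needs only the $H$-norm of displacement differences, and then treat the increments with one-step mild representations and BDG. Two small citation fixes: the first-component Lipschitz bound is \eqref{eq:Lip} together with contractivity of $\mathcal P_N$ (the displayed \eqref{eq:Galerkin-Lip} has $\|z-\xi\|_{\cH}$ on the right, which would lose the rate), and the $\tau^{1/2}$ bounds for the stochastic integrals on $[\ttau,t]$ require BDG with H\"older keeping the factor $(t-\ttau)^{p/2-1}$, since \eqref{eq:convolution-bounds} as stated only yields $\tau^{1/p}$ in $L^p$.
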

\begin{proof}
The estimate \eqref{eq:ACLW-Galerkin} can be proved by applying
\eqref{eq:convolution-bounds} in $\cH^1$ to the mild formulation
of $Y^m$, together with \eqref{eq:St-IN},
\eqref{eq:Galerkin-Fz}, \eqref{eq:Xalp-Galerkin},
and Lemma~\ref{lem:W}.
The estimate \eqref{lem:XYreg} follows from the mild
formulations of $X$ and $Y^m$, the group estimate
\eqref{eq:St-IN}, \eqref{eq:ACLW-Galerkin},
and the Burkholder--Davis--Gundy inequality. We omit further tedious details.
\end{proof}

\begin{proof}[Proof of Lemma \ref{lem:RGXY}]
The properties \eqref{eq:Abound},
\eqref{eq:Galerkin-lin}, Lemma \ref{lem:W}, and the
Burkholder--Davis--Gundy inequality yield
\begin{align}\label{eq:Zi-first}
\|[\mathcal Z(t)]_1\|_{L^p(\Omega,H)}
\le C\left(
m(t-\ttau)^{\frac32}+(t-\ttau)^{\frac12}\right)
\le C(t-\ttau)^{\frac12}.
\end{align}
To treat the drift and diffusion remainders with the same notation,
set $\mathscr G_F=\cH$ and
$\mathscr G_\Sigma=\mathscr L_2(H_Q,\cH)$.
Then \eqref{eq:Rst} and
\eqref{eq:DG1} imply
\begin{align*}
\|\mathcal R_{G,\mathrm{st}}(t)\|_
 {L^p(\Omega,\mathscr G_G)}
\le C\|[\mathcal Z(t)]_1\|_{L^p(\Omega,H)}
\le Cm^{-\frac12}.
\end{align*}
It remains to estimate $\mathcal R_{G,\mathrm{reg}}(t)$. 
Subtracting \eqref{eq:Rst} from \eqref{eq:tildeRG} yields that for $G\in\{F,\Sigma\}$,
\begin{align}\label{eq:Rreg-explicit}
\mathcal R_{G,\mathrm{reg}}(t)
&={}m(S_{t-\ttau}-I-(t-\ttau)A)
G(X_{\ttau}^m)+\vartheta_m(t)A
\big(G(X_{\ttau}^m)-G(X_{\ttau})\big)\notag\\
&+\frac T2A\big(G(X_{\ttau}^m)-G(X_t)\big)+\int_0^1\!\big(DG(\theta X_{\ttau}^m+(1-\theta)X_{\ttau})
-DG(X_t)\big)Y_{\ttau}^m\ud\theta\notag\\
&+m\int_0^1\!\big(DG(X_t)
-DG(\theta X_{\ttau}+(1-\theta)X_t)\big)
(X_t-X_{\ttau})\ud\theta\notag\\
&+\vartheta_m(t)
\big(DG(X_{\ttau})-DG(X_t)\big)AX_{\ttau}-DG(X_t)(S_{t-\ttau}-I)Y_{\ttau}^m\notag\\
&+\frac T2DG(X_t)A(S_{t-\ttau}-I)X_{\ttau}-mDG(X_t)
\big(S_{t-\ttau}-I-(t-\ttau)A\big)X_{\ttau}\notag\\
&+DG(X_t)\Big[
-m(t-\ttau)S_{t-\ttau}F(X_{\ttau}^m)
+\frac T2\int_{\ttau}^tAS_{t-s}F(X_s)\ud s\Big].
\end{align}
In view of  \eqref{eq:St-IN} and
\eqref{eq:Abound}, for any $z\in\cH_N$ and $\beta\in(0,1]$,
\begin{equation}\label{eq:Sreg}
\|S_{t-\ttau}z-z-(t-\ttau)Az\|_{\cH}
=\left\|\int_0^{t-\ttau}(S_r-I)Az\,\ud r
	\right\|_{\cH}
\le C(t-\ttau)^{\beta+1}\|z\|_{\cH^{\beta+1}}.
\end{equation}
Applying \eqref{eq:Sreg}, \eqref{eq:Galerkin-Fz},
\eqref{eq:Xalp-Galerkin}, \eqref{eq:DG1}, and
\eqref{eq:DjG0} term by term in
\eqref{eq:Rreg-explicit}, and using
$|\vartheta_m(t)|\le T/2$, we can deduce that
\begin{align}\label{eq:RGreg}
&\|\mathcal R_{G,\mathrm{reg}}(t)\|_{L^p(\Omega,\mathscr{G}_G)}\\\notag
&\le Cm(t-\ttau)^\alpha
+C\|[X_{\ttau}^m-X_{\ttau}]_1\|_{L^p(\Omega,H)}
+C\|[X_{\ttau}^m-X_t]_1\|_{L^p(\Omega,H)}
\\\notag
&\quad+C\Big(
\|[X_{\ttau}^m-X_{\ttau}]_1\|_{L^{2p}(\Omega,H)}
+\|[X_t-X_{\ttau}]_1\|_{L^{2p}(\Omega,H)}
\Big)
\|Y_{\ttau}^m\|_{L^{2p}(\Omega,\cH)}
\\\notag
&\quad+Cm\|[X_t-X_{\ttau}]_1\|_{L^{2p}(\Omega,H)}^2
+C\|[X_t-X_{\ttau}]_1\|_{L^{2p}(\Omega,H)}
\|X_{\ttau}\|_{L^{2p}(\Omega,\cH^1)}
\\\notag
&\quad+C\|(S_{t-\ttau}-I)Y_{\ttau}^m\|_{L^p(\Omega,\cH)}
+C\|A(S_{t-\ttau}-I)X_{\ttau}\|_{L^p(\Omega,\cH)}
\\\notag
&\quad+Cm(t-\ttau)^\alpha
+Cm(t-\ttau)
\|[S_{t-\ttau}F(X_{\ttau}^m)]_1\|_{L^p(\Omega,H)}\\\notag
&\quad+C\int_{\ttau}^t
\|[AS_{t-s}F(X_s)]_1\|_{L^p(\Omega,H)}\ud s.
\end{align}

Before estimating the group increments, we first control
the terms in \eqref{eq:RGreg} involving increments of \(X\) and
\(X^m\).  The identity
\(X_{\ttau}^m-X_{\ttau}=m^{-1}Y_{\ttau}^m\),
\eqref{Holderq-Galerkin}, and Lemma \ref{lem:W} imply
\begin{align*}
\|[X_{\ttau}^m-X_{\ttau}]_1\|_{L^p(\Omega,H)}
&\le Cm^{-1},\qquad
\|[X_t-X_{\ttau}]_1\|_{L^p(\Omega,H)}
\le C(t-\ttau),\\
\|[X_{\ttau}^m-X_t]_1\|_{L^p(\Omega,H)}
&\le C\big(m^{-1}+t-\ttau\big)\le Cm^{-1}.
\end{align*}
Together with Lemma \ref{lem:W}, these bounds show that
all terms in \eqref{eq:RGreg} containing
\(X_{\ttau}^m-X_{\ttau}\), \(X_{\ttau}^m-X_t\), or
\(X_t-X_{\ttau}\) are bounded by
\(Cm^{-(\alpha-1)}\), since \(1<\alpha\le2\).

Again by \eqref{eq:St-IN} and \eqref{eq:Abound}, together with Lemma \ref{lem:ACLW} and \eqref{eq:Xalp-Galerkin}, we arrive at
\begin{align}\label{eq:St1}
\|(S_{t-\ttau}-I)Y_{\ttau}^m\|_{L^p(\Omega,\cH)}
&\le C(t-\ttau)\|Y_{\ttau}^m\|_{L^p(\Omega,\cH^1)}
\le Cm^{-(\alpha-1)},\\\label{eq:St2}
\|A(S_{t-\ttau}-I)X_{\ttau}\|_{L^p(\Omega,\cH)}
&\le C(t-\ttau)^{\alpha-1}
\|X_{\ttau}\|_{L^p(\Omega,\cH^\alpha)}
\le Cm^{-(\alpha-1)}.
\end{align}
Finally, because $F(z)=(0,P_Nf(z_1))^\top$ for $z\in\cH_N$, it follows from \eqref{eq:St-IN}, \eqref{eq:Galerkin-lin} and Lemma \ref{lem:W} that
\begin{align}\label{eq:St3}
&\|[S_{t-\ttau}F(X_{\ttau}^m)]_1\|_{L^p(\Omega,H)}=\|[(S_{t-\ttau}-I)F(X_{\ttau}^m)]_1\|_{L^p(\Omega,H)}\\\notag
&\le C(t-\ttau)\|F(X_{\ttau}^m)\|_{L^p(\Omega,\cH^1)}\le C(t-\ttau)(1+\|X_{\ttau}^m\|_{L^p(\Omega,\cH)})\le Cm^{-1},\\\label{eq:St4}
&\|[AS_{t-s}F(X_s)]_1\|_{L^p(\Omega,H)}\le C\|F(X_s)\|_{L^p(\Omega,\cH^1)}\le C.
\end{align}
Substituting \eqref{eq:St1}, \eqref{eq:St2},
\eqref{eq:St3}, and \eqref{eq:St4} into \eqref{eq:RGreg} proves
\eqref{eq:Rreg-Rst-bounds}.
\end{proof}
\begin{proof}[Proof of Proposition \ref{prop:Erem}]
For fixed \(t\), define
\begin{align*}
\Theta_1(x,y,z)
&:=\left\langle DF(x)z,\partial_yv(t,x,y)\right\rangle,\\
\Theta_2(x,y,z)
&:=\mathrm{tr}_Q\big(
\Sigma(x)^*\partial_{xy}v(t,x,y)D\Sigma(x)z\big),\\
\Theta_3(x,y,z)
&:=\mathrm{tr}_Q\Big(
(D\Sigma(x)z)^*\partial_{yy}v(t,x,y)
\big[\Phi_\Sigma(x,y)
+\vartheta_m(t)\phi_\Sigma(X_{t_i})\big]\Big).
\end{align*}
Using
\(\mathcal R_{G,\mathrm{st}}(t)=DG(X_t)\mathcal Z(t)\),
we have
\begin{gather*}
\mathscr A_t(\mathcal R_{F,\mathrm{st}}(t))
=\Theta_1(X_t,Y_t^m,\mathcal Z(t)),\qquad
\mathscr C_t(\mathcal R_{\Sigma,\mathrm{st}}(t))
=\Theta_2(X_t,Y_t^m,\mathcal Z(t)),\\
\mathscr B_t\Big(
\mathcal R_{\Sigma,\mathrm{st}}(t),
\Phi_\Sigma(X_t,Y_t^m)
+\vartheta_m(t)\phi_\Sigma(X_{t_i})\Big)
=\Theta_3(X_t,Y_t^m,\mathcal Z(t)).
\end{gather*}
Since the coefficients of
\(\Theta_\ell(X_{t_i},Y_{t_i}^m,\cdot)\) are
\(\mathscr F_{t_i}\)-measurable, \eqref{eq:Zi} implies
\begin{equation}\label{eq:frozen-stochastic-terms}
\E\left[
\Theta_\ell(X_{t_i},Y_{t_i}^m,\mathcal Z(t))
\mid\mathscr F_{t_i}\right]=0,
\qquad \ell=1,\ldots,3.
\end{equation}
Taking account of \eqref{eq:frozen-stochastic-terms}, we arrive at
\begin{equation}\label{eq:DiffTheta}
\E\Theta_\ell(X_t,Y_t^m,\mathcal Z(t))=\E\big[
\Theta_\ell(X_t,Y_t^m,\mathcal Z(t))
-\Theta_\ell(X_{t_i},Y_{t_i}^m,\mathcal Z(t))
\big].
\end{equation}
Let \(l,h\in\cH_N\), where \(l\) is measured in the ambient
\(\cH^1\)-norm and \(h\) in the ambient \(\cH\)-norm.  For
\(\Theta_1\), the product rule yields
\begin{align*}
D_x\Theta_1(x,y,z)l
&=\left\langle D^2F(x)(l,z),\partial_yv(t,x,y)\right\rangle
+\partial_{xy}v(t,x,y)\big(l,DF(x)z\big),\\
D_y\Theta_1(x,y,z)h
&=\partial_{yy}v(t,x,y)\big(h,DF(x)z\big).
\end{align*}
By Lemma \ref{lem:FSigN} and Proposition \ref{prop:varphiY}, there exist
$C>0$ and $r\ge0$, independent of $N$, such that
\begin{align*}
|D_x\Theta_1(x,y,z)l|+|D_y\Theta_1(x,y,z)h|
\le C(1+\|x\|_{\cH^1}+\|y\|_{\cH})^r
(\|l\|_{\cH^1}+\|h\|_{\cH})\|z_1\|_H .
\end{align*}

For \(\Theta_2\), direct differentiation yields
\begin{align*}
D_x\Theta_2(x,y,z)l
&=\mathrm{tr}_Q\big(
(D\Sigma(x)l)^*\partial_{xy}v(t,x,y)D\Sigma(x)z\big)\\
&\quad+\mathrm{tr}_Q\big(
\Sigma(x)^*\partial_{xxy}v(t,x,y)
(l,D\Sigma(x)z)\big)\\
&\quad+\mathrm{tr}_Q\big(
\Sigma(x)^*\partial_{xy}v(t,x,y)
D^2\Sigma(x)(l,z)\big),\\
D_y\Theta_2(x,y,z)h
&=\mathrm{tr}_Q\big(
\Sigma(x)^*\partial_{xyy}v(t,x,y)
(h,D\Sigma(x)z)\big).
\end{align*}
Therefore, \eqref{eq:trace}, Lemma \ref{lem:FSigN}, and
Proposition \ref{prop:varphiY} imply
\begin{equation*}
|D_x\Theta_2(x,y,z)l|+|D_y\Theta_2(x,y,z)h|
\le C(1+\|x\|_{\cH^1}+\|y\|_{\cH})^r
(\|l\|_{\cH^1}+\|h\|_{\cH})\|z_1\|_H .
\end{equation*}

Similarly, differentiating the three factors in the definition of
\(\Theta_3\) and applying the product rule, and then using
\eqref{eq:Phi-growth},
\eqref{eq:phi-Phi-derivative-bounds}, \eqref{eq:trace}, Lemma
\ref{lem:FSigN}, Proposition \ref{prop:varphiY}, and
\(|\vartheta_m(t)|\le T/2\), we obtain
\begin{align*}
&|D_x\Theta_3(x,y,z)l|+|D_y\Theta_3(x,y,z)h|\\
&\quad\le
C(1+\|X_{t_i}\|_{\cH^1})
(1+\|x\|_{\cH^1}+\|y\|_{\cH})^r
(\|l\|_{\cH^1}+\|h\|_{\cH})\|z_1\|_H .
\end{align*}
Applying the mean-value formula to \eqref{eq:DiffTheta}, then using the moment bounds \eqref{eq:LpX-Galerkin} and \eqref{Strongorder-Galerkin},  we obtain, for $\ell=1,2,3$,
\begin{align*}
&\left|
\E\Theta_\ell(X_t,Y_t^m,\mathcal Z(t))\right|\\
&\le
C\big(
\|X_t-X_{t_i}\|_{L^p(\Omega,\cH^1)}
+\|Y_t^m-Y_{t_i}^m\|_{L^p(\Omega,\cH)}\big)
\|[\mathcal Z(t)]_1\|_{L^p(\Omega,H)},
\end{align*}
where $p$ is sufficiently large. Furthermore, by H\"older's inequality,  \eqref{lem:XYreg}, and \eqref{eq:Zi-first}, \begin{align*}
\left|
\E\Theta_\ell(X_t,Y_t^m,\mathcal Z(t))\right|
&\le
Cm^{-\min\{\alpha-1,\frac12\}}(t-t_i)^{1/2}
\le Cm^{-(\alpha-1)}.
\end{align*}
Here \(t-t_i\le T/m\) and
\(\min\{\alpha-1,\frac12\}+\frac12\ge\alpha-1\).
The proof is complete.
\end{proof}
\subsection{Proof of Proposition \ref{prop:one-step}}
\label{S:one-step-proof}
By  \eqref{eq:RG}, we can rewrite
\begin{align*}
\ud Y_s^m
&=(AY_s^m+f_s^m)\ud s+\sigma_s^m\ud W_s,
\end{align*}
where \(f_s^m\) and \(\sigma_s^m\) are defined by
\begin{align}\label{eq:fsigma-cell}
f_s^m:=\Phi_F(X_s,Y_s^m)+\mathcal R_F(s),\qquad
\sigma_s^m:=\Phi_\Sigma(X_s,Y_s^m)+\mathcal R_\Sigma(s).
\end{align}
For a smooth function \(\Theta\), define
\begin{align}\label{eq:local-Ito-operator}
(\mathscr L_{m,s}\Theta)(s,x,y)
&:=D_x\Theta(s,x,y)(Ax+F(x))
  +D_y\Theta(s,x,y)(Ay+f_s^m)\notag\\
&\quad+\frac12\mathrm{tr}_Q\!\left(
\Sigma(x)^*D_{xx}\Theta(s,x,y)\Sigma(x)\right)+\mathrm{tr}_Q\!\left(
\Sigma(x)^*D_{xy}\Theta(s,x,y)\sigma_s^m\right)\notag\\
&\quad
  +\frac12\mathrm{tr}_Q\!\left(
(\sigma_s^m)^*D_{yy}\Theta(s,x,y)\sigma_s^m\right).
\end{align}
It\^o's formula applied to \(\Theta(s,X_s,Y_s^m)\) reads
\begin{align}\label{eq:dTheta}
\ud\Theta(s,X_s,Y_s^m)
&=\big[(\partial_s+\mathscr L_{m,s})\Theta\big]
  (s,X_s,Y_s^m)\ud s\\\notag
&\quad+\Big[
D_x\Theta(s,X_s,Y_s^m)\Sigma(X_s)
+D_y\Theta(s,X_s,Y_s^m)\sigma_s^m
\Big]\ud W_s.
\end{align}
Hence \(\mathscr L_{m,s}\) is the local It\^o operator associated
with the semimartingale \((X_s,Y_s^m)\).

Fix \(i\in\{0,1,\ldots,m-1\}\) and
\(t\in[t_i,t_{i+1}]\).  For \(s\in[t_i,t]\) and
\(x,y\in\cH_N\), define
\begin{equation}\label{eq:frozen-functions}
\begin{split}
\Psi_i^{(1)}(s,x,y)
&:=\langle\phi_F(X_{t_i}),\partial_yv(s,x,y)\rangle,\\
\Psi_i^{(2)}(s,x,y)
&:=\mathrm{tr}_Q\big(
\Sigma(x)^*\partial_{xy}v(s,x,y)\phi_\Sigma(X_{t_i})\big),\\
\Psi_i^{(3)}(s,x,y)
&:=\mathrm{tr}_Q\big(
\phi_\Sigma(X_{t_i})^*\partial_{yy}v(s,x,y)
\Phi_\Sigma(x,y)\big),\\
\Psi_i^{(4)}(s,x,y)
&:=\mathrm{tr}_Q\big(
\phi_\Sigma(X_{t_i})^*\partial_{yy}v(s,x,y)
\phi_\Sigma(X_{t_i})\big),\\
\Psi_i^{(5)}(s,x,y)
&:=\mathrm{tr}_Q\big(
\phi_\Sigma(x)^*\partial_{yy}v(s,x,y)\phi_\Sigma(x)\big).
\end{split}
\end{equation}
The five quantities
\(\Psi_i^{(\ell)}(s,X_s,Y_s^m)\), \(\ell=1,\ldots,5\), are precisely
the five quantities appearing in \eqref{eq:one-step}. 
To prove Proposition \ref{prop:one-step}, it therefore
suffices to prove
\begin{equation}\label{eq:Psiiel}
\left|\E\!\left[
\Psi_i^{(\ell)}(t,X_t,Y_t^m)
-\Psi_i^{(\ell)}(t_i,X_{t_i},Y_{t_i}^m)\right]\right|
\le C(t-t_i)^{\alpha-1}.
\end{equation}
Recall that 
$
\phi_G(z)=AG(z)-DG(z)Az$ and
$\Phi_\Sigma(z,\xi)=D\Sigma(z)\xi+\frac T2\phi_\Sigma(z).$
  Thus, for every \(p\ge2\) and
\(s\in[t_i,t]\),
\begin{align}\label{eq:frozen-directions}
&\|\phi_F(X_s)\|_{L^p(\Omega,\cH^{\alpha-1})}
+\|\phi_\Sigma(X_s)\|_
 {L^p(\Omega,\mathscr L_2(H_Q,\cH^{\alpha-1}))}
+\|\Phi_F(X_s,Y_s^m)\|_{L^p(\Omega,\cH)}\notag\\
&\qquad
+\|\Phi_\Sigma(X_s,Y_s^m)\|_
 {L^p(\Omega,\mathscr L_2(H_Q,\cH))}
+\|\Sigma(X_s)\|_
 {L^p(\Omega,\mathscr L_2(H_Q,\cH^1))}
\le C .
\end{align}
Here, \eqref{eq:Abound}, \eqref{eq:Galerkin-Fz},
\eqref{eq:DG1}, and \eqref{eq:Xalp-Galerkin} yield the estimates of
\(\phi_F(X_s)\) and \(\phi_\Sigma(X_s)\) in
\(\cH^{\alpha-1}\).  The estimates of
\(\Phi_F(X_s,Y_s^m)\) and \(\Phi_\Sigma(X_s,Y_s^m)\) follow from
\eqref{eq:Phi-growth}, \eqref{eq:LpX-Galerkin}, and
\eqref{Strongorder-Galerkin}, while that of \(\Sigma(X_s)\) follows
from \eqref{eq:Galerkin-lin} and \eqref{eq:LpX-Galerkin}.
We next establish a moment estimate for the coefficients
in \eqref{eq:fsigma-cell}.  Recall from \eqref{eq:compactRG} that, for
\(G\in\{F,\Sigma\}\),
$
\mathcal R_G(s)
=\widetilde{\mathcal R}_G(s)
+\vartheta_m(s)\phi_G(X_{\underline s}).
$
The identity
\(\widetilde{\mathcal R}_G
=\mathcal R_{G,\mathrm{reg}}+\mathcal R_{G,\mathrm{st}}\)
and Lemma \ref{lem:RGXY}, along with \eqref{eq:frozen-directions} and
\(|\vartheta_m(s)|\le T/2\) lead to
\[
\|\mathcal R_F(s)\|_{L^p(\Omega,\cH)}
+\|\mathcal R_\Sigma(s)\|_
 {L^p(\Omega,\mathscr L_2(H_Q,\cH))}
\le C.
\]
Consequently, \eqref{eq:fsigma-cell} and
\eqref{eq:frozen-directions} imply, for every \(p\ge2\),
\begin{equation}\label{eq:fsigma-moment}
\E\left[
\|f_s^m\|_{\cH}^p+
\|\sigma_s^m\|_{\mathscr L_2(H_Q,\cH)}^p
\right]\le C(p),\qquad s\in[t_i,t].
\end{equation}
\begin{remark}\label{rem:Aterm}
To prove \eqref{eq:Psiiel}, one would apply It\^o's formula to
\(\Psi_i^{(\ell)}(s,X_s,Y_s^m)\), which, according to
\eqref{eq:dTheta}, would require an \(N\)-uniform bound for
$
\big[(\partial_s+\mathscr L_{m,s})\Psi_i^{(\ell)}\big]\allowbreak
(s,X_s,Y_s^m)$.
For example, estimating the time derivative and generator term
separately for \(\Psi_i^{(1)}\) produces the term
\[
\big\langle A\phi_F(X_{t_i}),
\partial_yv(s,X_s,Y_s^m)\big\rangle.
\]
Estimate \eqref{eq:frozen-directions} controls
\(\phi_F(X_{t_i})\) only in \(\cH^{\alpha-1}\), so, when
\(\alpha<2\), no \(N\)-uniform bound is available for
\(A\phi_F(X_{t_i})\) in \(\cH\).
Hence, the terms \(\partial_s\Psi_i^{(\ell)}\) and
\(\mathscr L_{m,s}\Psi_i^{(\ell)}\) cannot be estimated separately
with constants independent of \(N\).
\end{remark}
To address the difficulty in Remark~\ref{rem:Aterm}, we transport
the coefficient directions backward along the wave group to cancel
the terms arising from the additional application of $A$.
For \(t\in[t_i,t_{i+1}]\) and $s\in[t_i,t]$, 
 we introduce
backward-transported coefficient directions:
\begin{equation}\label{eq:transported-fields}
\left\{
\begin{aligned}
&\Sigma_{s,t}(x)
:=S_{s-t}\Sigma(S_{t-s}x),\\
&\phi_{\Sigma,s,t}(x)
:=S_{s-t}\phi_\Sigma(S_{t-s}x),\\
&\Phi_{\Sigma,s,t}(x,y)
:=S_{s-t}\Phi_\Sigma(S_{t-s}x,S_{t-s}y).
\end{aligned}
\right.
\end{equation}
For \(j\in\{1,2\}\), the chain rule yields
\begin{align*}
D^j\Sigma_{s,t}(x)(q_1,\ldots,q_j)
&=S_{s-t}D^j\Sigma(S_{t-s}x)
   (S_{t-s}q_1,\ldots,S_{t-s}q_j),\\
D^j\phi_{\Sigma,s,t}(x)(q_1,\ldots,q_j)
&=S_{s-t}D^j\phi_\Sigma(S_{t-s}x)
   (S_{t-s}q_1,\ldots,S_{t-s}q_j).
\end{align*}
Likewise, for \(a+b\le2\),
\begin{align*}
&D_x^aD_y^b\Phi_{\Sigma,s,t}(x,y)
 (q_1,\ldots,q_a;k_1,\ldots,k_b)\\
&\quad=S_{s-t}D_x^aD_y^b\Phi_\Sigma
 (S_{t-s}x,S_{t-s}y)
 (S_{t-s}q_1,\ldots,S_{t-s}q_a;
  S_{t-s}k_1,\ldots,S_{t-s}k_b).
\end{align*}
Since \(S_r\) is an isometry in every ambient space used here,
\eqref{eq:DG1}, \eqref{eq:DjG0}, \eqref{eq:DjG1},
\eqref{eq:phi-Phi-derivative-bounds}, \eqref{eq:D2phi-bound}, and
\eqref{eq:Phi-coordinate-bound} imply, for \(j=1,2\) and
\(1\le a+b\le2\),
\begin{align}\label{eq:transported-coefficient-bounds}
&\|D^j\Sigma_{s,t}(x)(q_1,\ldots,q_j)\|_
 {\mathscr L_2(H_Q,\cH^1)}
\le C\prod_{\nu=1}^j\|q_\nu\|_{\cH^1},\notag\\
&\|D^j\phi_{\Sigma,s,t}(x)(q_1,\ldots,q_j)\|_
 {\mathscr L_2(H_Q,\cH)}
\le C(1+\|x\|_{\cH^1})
\prod_{\nu=1}^j\|q_\nu\|_{\cH^1},\notag\\
&\|D_x^aD_y^b\Phi_{\Sigma,s,t}(x,y)
(q_1,\ldots,q_a;k_1,\ldots,k_b)\|_
 {\mathscr L_2(H_Q,\cH)}
\notag\\
&\qquad\le C(1+\|x\|_{\cH^1}+\|y\|_{\cH})^{
 \mathbf 1_{\{a\ge1\}}}
\prod_{\nu=1}^a\|q_\nu\|_{\cH^1}
\prod_{\mu=1}^b\|k_\mu\|_{\cH}.
\end{align}
The constant \(C\) is independent of \(N\), \(s\), and \(t\).
The zero-order diffusion direction also satisfies, by
\eqref{eq:Galerkin-lin} and the isometry of the group,
\[
\|\Sigma_{s,t}(x)\|_{\mathscr L_2(H_Q,\cH^1)}
\le C(1+\|x\|_{\cH}).
\]
To estimate the functions defined in
\eqref{eq:frozen-functions}, we introduce
the following auxiliary functions:
\begin{equation}\label{eq:transported-functions}
\left\{
\begin{aligned}
\widehat\Psi_i^{(1)}(s,x,y;t)
&:=\langle S_{s-t}\phi_F(X_{t_i}),
\partial_yv(s,x,y)\rangle,\\
\widehat\Psi_i^{(2)}(s,x,y;t)
&:=\mathrm{tr}_Q\big(
\Sigma_{s,t}(x)^*\partial_{xy}v(s,x,y)
S_{s-t}\phi_\Sigma(X_{t_i})\big),\\
\widehat\Psi_i^{(3)}(s,x,y;t)
&:=\mathrm{tr}_Q\big(
\big(S_{s-t}\phi_\Sigma(X_{t_i})\big)^*
\partial_{yy}v(s,x,y)
\Phi_{\Sigma,s,t}(x,y)\big),\\
\widehat\Psi_i^{(4)}(s,x,y;t)
&:=\mathrm{tr}_Q\big(
\big(S_{s-t}\phi_\Sigma(X_{t_i})\big)^*
\partial_{yy}v(s,x,y)
S_{s-t}\phi_\Sigma(X_{t_i})\big),\\
\widehat\Psi_i^{(5)}(s,x,y;t)
&:=\mathrm{tr}_Q\big(
\phi_{\Sigma,s,t}(x)^*\partial_{yy}v(s,x,y)
\phi_{\Sigma,s,t}(x)\big).
\end{aligned}
\right.
\end{equation}
At \(s=t\), these are exactly the five functions in
\eqref{eq:frozen-functions}.
Moreover, the difference $\widehat\Psi_i^{(\ell)}(t_i,X_{t_i},Y_{t_i}^m;t)-\Psi_i^{(\ell)}(t_i,X_{t_i},Y_{t_i}^m)
$ at \(s=t_i\)
is \(\mathscr F_{t_i}\)-measurable.  Adding and subtracting
\(\widehat\Psi_i^{(\ell)}(t_i,X_{t_i},Y_{t_i}^m;t)\) yields
\begin{align}\label{eq:transported-increment-decomposition}
&\E\!\left[
\Psi_i^{(\ell)}(t,X_t,Y_t^m)
-\Psi_i^{(\ell)}(t_i,X_{t_i},Y_{t_i}^m)
\mid\mathscr F_{t_i}\right]\notag\\
&=
\widehat\Psi_i^{(\ell)}(t_i,X_{t_i},Y_{t_i}^m;t)
-\Psi_i^{(\ell)}(t_i,X_{t_i},Y_{t_i}^m)\notag\\
&\quad+\E\!\left[
\widehat\Psi_i^{(\ell)}(t,X_t,Y_t^m;t)
-\widehat\Psi_i^{(\ell)}(t_i,X_{t_i},Y_{t_i}^m;t)
\mid\mathscr F_{t_i}\right].
\end{align}
\begin{lemma}\label{lem:transport-evolution}
Under the assumptions of Proposition \ref{prop:one-step}, for
\(\ell=1,\ldots,5\), {there exists \(C>0\), independent
of \(m\) and \(N\), such that, for
\(t_i\le s\le t\le t_{i+1}\) and \(i=0,1,\ldots,m-1\),}
\begin{align}\label{eq:transported-generator-bound}
\left\|
\big[(\partial_s+\mathscr L_{m,s})
\widehat\Psi_i^{(\ell)}\big](s,X_s,Y_s^m;t)
\right\|_{L^1(\Omega)}
\le C.
\end{align}
\end{lemma}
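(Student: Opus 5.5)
The strategy is to compute $(\partial_s+\mathscr L_{m,s})\widehat\Psi_i^{(\ell)}$ explicitly and check that every occurrence of the unbounded operator $A$ acting on a coefficient direction cancels. After cancellation, what remains involves only derivatives of $v$ of total order at most four, with at most three $x$-derivatives, evaluated on directions that are bounded in the ambient norms. These are then controlled by Proposition~\ref{prop:varphiY}, the transported coefficient bounds \eqref{eq:transported-coefficient-bounds}, \eqref{eq:frozen-directions} and \eqref{eq:fsigma-moment}.

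\textbf{Case $\ell=1$.} Write $h_s:=S_{s-t}\phi_F(X_{t_i})$, which is $\mathscr F_{t_i}$-measurable and satisfies $\partial_s h_s=Ah_s$. The time derivative gives two terms:
\[
\langle Ah_s,\partial_yv\rangle+\partial_s\partial_yv(s,x,y)h_s .
\]
Next, differentiate the Kolmogorov equation \eqref{eq:Kolu} for $v(s)=u(T-s)$ in $y$ in the direction $h$:
\[
\partial_s\partial_yv\,h=-\partial_y[(\mathcal L_x+\mathcal L_y)v]h .
\]
The first-order part of $\mathcal L_y$ is $\langle Ay,\partial_yv\rangle$, and its $y$-derivative contains $\langle Ah,\partial_yv\rangle$. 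This cancels exactly against the term $\langle Ah_s,\partial_yv\rangle$ produced by transport.

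The remaining pieces of $-\partial_y\mathcal L v\,h$ are:
\begin{itemize}
\item $\langle Ax+F(x),\partial_{xy}v\,h\rangle$ and $\langle Ay+\Phi_F,\partial_{yy}v\,h\rangle$;
\item $\langle DF(x)h,\partial_yv\rangle$, coming from $D_y\Phi_F=DF$;
\item trace terms with $\partial_{xxy}v,\partial_{xyy}v,\partial_{yyy}v$, and terms with $D\Sigma(x)h$.
\end{itemize}
These combine with $\mathscr L_{m,s}$ applied to $\langle h,\partial_yv\rangle$. The transport-type terms $D_x(\cdot)(Ax)$ and $D_y(\cdot)(Ay)$ appear with opposite signs and cancel, as do the principal second-order $x$-terms. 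What survives is:
\begin{itemize}
\item drift differences, namely $\partial_{yy}v(h,f_s^m-\Phi_F)=\partial_{yy}v(h,\mathcal R_F)$;
\item diffusion differences involving $\sigma_s^m$ versus $\Phi_\Sigma$, and the extra $\frac{T^2}{24}\phi_\Sigma$ term;
\item the lower-order term $\langle DF(x)h,\partial_yv\rangle$.
\end{itemize}
All of these are bounded by $C(1+\|x\|_{\cH^1}+\|y\|_{\cH})^r$ times $\|h\|_{\cH}$ and moments of $f_s^m,\sigma_s^m$. Since $\|h_s\|_{\cH}=\|\phi_F(X_{t_i})\|_{\cH}$ by isometry, Hölder's inequality with \eqref{eq:LpX-Galerkin}, \eqref{Strongorder-Galerkin}, \eqref{eq:fsigma-moment} and \eqref{eq:frozen-directions} gives an $L^1$ bound.

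\textbf{Cases $\ell=2,\dots,5$.} These are handled in the same way, and the point of transporting the $x$-dependent fields $\Sigma_{s,t},\phi_{\Sigma,s,t},\Phi_{\Sigma,s,t}$ becomes visible here. For example,
\[
\partial_s\Sigma_{s,t}(x)=A\Sigma_{s,t}(x)-D\Sigma_{s,t}(x)Ax .
\]
The second piece cancels against the drift term $D_x\Sigma_{s,t}(x)(Ax)$ in $\mathscr L_{m,s}$. The first piece cancels against the $A$-terms produced by differentiating $\langle Ax,\partial_xv\rangle$ in the $x$-direction when we use the equation for $\partial_{xy}v$. Analogous cancellations hold for $\phi_{\Sigma,s,t}$ and $\Phi_{\Sigma,s,t}$, and for the two $\mathscr L_2(H_Q)$ slots in $\mathrm{tr}_Q$.

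After cancellation, the remaining terms contain:
\begin{itemize}
\item $D^j\Sigma_{s,t}$, $D^j\phi_{\Sigma,s,t}$ and $D^aD^b\Phi_{\Sigma,s,t}$ with $j,a+b\le2$, evaluated on directions $\Sigma(x)Q^{1/2}e_k$ (bounded in $\cH^1$) or $\sigma_s^mQ^{1/2}e_k$ (bounded in $\cH$);
\item derivatives of $v$ up to order four with at most three $x$-derivatives, such as $\partial_{xxy}v$ or $\partial_{xxyy}v$ for $\ell=2,3$.
\end{itemize}
The bounds \eqref{eq:transported-coefficient-bounds} measure $x$-directions in $\cH^1$, which matches the available $\Sigma(x)\in\mathscr L_2(H_Q,\cH^1)$. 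The $y$-directions $\sigma_s^m$ enter only through $D_y$, which only requires $\cH$ bounds.

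\textbf{Main obstacle.} The delicate step is the bookkeeping for $\ell=2,3,5$. There, second-order trace terms such as $\mathrm{tr}_Q(\sigma^*D_{yy}[\cdots]\sigma)$ hit $\Phi_{\Sigma,s,t}$ through its $y$-dependence, and $x$-second derivatives hit $\Sigma_{s,t}$ and $\phi_{\Sigma,s,t}$. We must verify that no surviving term requires $D^2\phi_\Sigma$ in an $\cH^1$-incompatible direction or four $x$-derivatives of $v$. Wherever $\partial_{xx}$ would fall on $\partial_{xy}v$, I would first use the differentiated Kolmogorov equation to trade that term for the corresponding generator piece, so that only $\partial_x^3\partial_yv$ (allowed, since $a\le3$) appears. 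If an $\partial_x^4$ term seemed to survive, I would check that it cancels between $\partial_s$ and $\frac12\mathrm{tr}_Q(\Sigma^*D_{xx}\cdot\,\Sigma)$, exactly as the principal parts do.
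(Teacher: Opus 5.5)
Your proposal is correct and follows essentially the paper's argument. The backward transport identities $(\partial_s+\mathcal L^A)S_{s-t}\phi_G(X_{t_i})=AS_{s-t}\phi_G(X_{t_i})$, together with their analogues for $\Sigma_{s,t}$, $\phi_{\Sigma,s,t}$ and $\Phi_{\Sigma,s,t}$, cancel the $A$-terms produced by differentiating the Kolmogorov equation, and everything that remains is controlled by Proposition~\ref{prop:varphiY} with the coefficient and moment bounds.

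The paper organizes this more economically via the split $\partial_s+\mathscr L_{m,s}=(\partial_s+\mathcal L^A)+(\mathscr L_{m,s}-\mathcal L^A)$, so only the $A$-cancellations are needed. Your additional cancellations of the $F$-drift and principal second-order terms are harmless but unnecessary. Your worry about $\partial_x^4 v$ is also moot: the only fourth-order derivatives that ever arise are $D_{xxxy}v$, $D_{xxyy}v$, $D_{xyyy}v$ and $D_{yyyy}v$, all of which Proposition~\ref{prop:varphiY} covers directly.
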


\begin{proof}
Set $\mathcal L^0=\mathcal L_x+\mathcal L_y-\mathcal L^A$, where
$
(\mathcal L^A\Theta)(x,y)
=D_x\Theta(x,y)Ax+D_y\Theta(x,y)Ay.
$
Then
\begin{equation}\label{eq:ps}
\partial_sv+\mathcal L^Av+\mathcal L^0v=0.
\end{equation}
Direct differentiation of
\eqref{eq:transported-fields} yields
\begin{align}
(\partial_s+\mathcal L^A)
\big(S_{s-t}\phi_G(X_{t_i})\big)
&=AS_{s-t}\phi_G(X_{t_i}),
\qquad G\in\{F,\Sigma\},\notag\\
(\partial_s+\mathcal L^A)\Sigma_{s,t}(x)
&=A\Sigma_{s,t}(x),\notag\\
(\partial_s+\mathcal L^A)\phi_{\Sigma,s,t}(x)
&=A\phi_{\Sigma,s,t}(x),\notag\\
(\partial_s+\mathcal L^A)\Phi_{\Sigma,s,t}(x,y)
&=A\Phi_{\Sigma,s,t}(x,y).
\label{eq:alternative-transport-identities}
\end{align}

For the first identity below take \(k\in\cH_N\). For the second take
\(q\in\cH_N\), measured in the ambient \(\cH^1\)-norm, and
\(k\in\cH_N\), measured in the ambient \(\cH\)-norm. For the third
take \(q,k\in\cH_N\), both measured in the ambient \(\cH\)-norm.
Differentiating the equation \eqref{eq:ps} in these
directions yields
\begin{align}
(\partial_s+\mathcal L^A)
\langle k,\partial_yv\rangle
&=-\langle Ak,\partial_yv\rangle
-\langle k,D_y(\mathcal L^0v)\rangle,\notag\\
(\partial_s+\mathcal L^A)
\partial_{xy}v(q,k)
&=-\partial_{xy}v(Aq,k)-\partial_{xy}v(q,Ak)
-D_{xy}(\mathcal L^0v)(q,k),\notag\\
(\partial_s+\mathcal L^A)
\partial_{yy}v(q,k)
&=-\partial_{yy}v(Aq,k)-\partial_{yy}v(q,Ak)
-D_{yy}(\mathcal L^0v)(q,k).
\label{eq:alternative-differentiated-Kolmogorov}
\end{align}
Here all functions in
\eqref{eq:alternative-differentiated-Kolmogorov} are evaluated at
\((s,x,y)\). Combining
\eqref{eq:alternative-transport-identities} with
\eqref{eq:alternative-differentiated-Kolmogorov} and substituting
the resulting identities into the five definitions in
\eqref{eq:transported-functions}, we obtain
\begin{align}
(\partial_s+\mathcal L^A)\widehat\Psi_i^{(1)}
&=-\left\langle S_{s-t}\phi_F(X_{t_i}),
D_y(\mathcal L^0v)\right\rangle,\notag\\
(\partial_s+\mathcal L^A)\widehat\Psi_i^{(2)}
&=-\mathrm{tr}_Q\!\left(
\Sigma_{s,t}(x)^*D_{xy}(\mathcal L^0v)
S_{s-t}\phi_\Sigma(X_{t_i})\right),\notag\\
(\partial_s+\mathcal L^A)\widehat\Psi_i^{(3)}
&=-\mathrm{tr}_Q\!\left(
(S_{s-t}\phi_\Sigma(X_{t_i}))^*
D_{yy}(\mathcal L^0v)\Phi_{\Sigma,s,t}(x,y)\right),\notag\\
(\partial_s+\mathcal L^A)\widehat\Psi_i^{(4)}
&=-\mathrm{tr}_Q\!\left(
(S_{s-t}\phi_\Sigma(X_{t_i}))^*
D_{yy}(\mathcal L^0v)S_{s-t}\phi_\Sigma(X_{t_i})\right),\notag\\
(\partial_s+\mathcal L^A)\widehat\Psi_i^{(5)}
&=-\mathrm{tr}_Q\!\left(
\phi_{\Sigma,s,t}(x)^*
D_{yy}(\mathcal L^0v)\phi_{\Sigma,s,t}(x)\right).
\label{eq:alternative-five-cancellations}
\end{align}
The arguments of
\(\widehat\Psi_i^{(\ell)}\) in
\eqref{eq:alternative-five-cancellations} are \((s,x,y;t)\), and
those of the derivatives of \(\mathcal L^0v\) are \((s,x,y)\).
By \eqref{eq:Lx} and \eqref{eq:Ly},
\begin{align}\label{eq:L0v}\notag
\mathcal L^0v
&=D_xv\,F(x)+D_yv\,\Phi_F(x,y)
+\frac12\mathrm{tr}_Q\big(
\Sigma(x)^*D_{xx}v\,\Sigma(x)\big)\\\notag
&\quad+\mathrm{tr}_Q\big(
\Sigma(x)^*D_{xy}v\,\Phi_\Sigma(x,y)\big)
+\frac12\mathrm{tr}_Q\big(
\Phi_\Sigma(x,y)^*D_{yy}v\,\Phi_\Sigma(x,y)\big)\\
&\quad+\frac{T^2}{24}\mathrm{tr}_Q\big(
\phi_\Sigma(x)^*D_{yy}v\,\phi_\Sigma(x)\big),
\end{align}
where every derivative of \(v\) is evaluated at \((s,x,y)\).
Applying $D_y$, $D_{xy}$, and $D_{yy}$ to
\eqref{eq:L0v} and using the product rule, the only
fourth-order derivatives of $v$ that arise are $
D_{xxxy}v, D_{xxyy}v,
D_{xyyy}v$, and $D_{yyyy}v$.
These derivatives of $v$ are controlled uniformly in $N$ by
Proposition~\ref{prop:varphiY}.
For example, for $q\in\cH_N$ measured in $\cH^1$
and $k\in\cH_N$ measured in $\cH$, we have
\begin{align*}
&D_{xy}\Big[
\frac12\mathrm{tr}_Q\big(
\Sigma(x)^*D_{xx}v\,\Sigma(x)\big)
\Big](q,k)\\
&=\frac12\sum_{n=1}^{\infty}
D_{xxxy}v(s,x,y)
\big(q,\Sigma(x)Q^{1/2}e_n,
\Sigma(x)Q^{1/2}e_n;k\big)\\
&\quad+\sum_{n=1}^{\infty}
D_{xxy}v(s,x,y)
\big((D\Sigma(x)q)Q^{1/2}e_n,
\Sigma(x)Q^{1/2}e_n;k\big).
\end{align*}
By \eqref{eq:Galerkin-lin} and \eqref{eq:DG1},
the coefficient directions
$\Sigma(x)Q^{1/2}e_n$ and
$(D\Sigma(x)q)Q^{1/2}e_n$
have square-summable $\cH^1$-norms.
Thus all $x$-directions in these contractions
are measured in $\cH^1$, while the $y$-direction
$k$ is measured in $\cH$.
The remaining terms have the same structure:
$F(x)$, $\Sigma(x)Q^{1/2}e_n$, and their
coefficient derivatives provide $\cH^1$-valued
$x$-directions, whereas $\Phi_F(x,y)$,
$\Phi_\Sigma(x,y)Q^{1/2}e_n$,
$\phi_\Sigma(x)Q^{1/2}e_n$, and their
coefficient derivatives provide $\cH$-valued
$y$-directions.
The coefficient differentiations involved here require at most
second derivatives of $\phi_G$ and $\Phi_G$, and hence derivatives
of $f$ and $\sigma$ of order at most three.

Combining Proposition~\ref{prop:varphiY} with
\eqref{eq:Galerkin-lin}, \eqref{eq:DG1}, \eqref{eq:DjG0},
\eqref{eq:Phi-growth}, \eqref{eq:phi-Phi-derivative-bounds},
\eqref{eq:D2phi-bound}, and \eqref{eq:Phi-coordinate-bound}
shows that there exist $C>0$ and $r\ge0$, independent of $N$,
such that
\begin{align*}
|\langle k,D_y(\mathcal L^0v)(s,x,y)\rangle|
&\le C(1+\|x\|_{\cH^1}+\|y\|_{\cH})^r\|k\|_{\cH},\\
|D_{xy}(\mathcal L^0v)(s,x,y)(q,k)|
&\le C(1+\|x\|_{\cH^1}+\|y\|_{\cH})^r
\|q\|_{\cH^1}\|k\|_{\cH},\\
|D_{yy}(\mathcal L^0v)(s,x,y)(q,k)|
&\le C(1+\|x\|_{\cH^1}+\|y\|_{\cH})^r
\|q\|_{\cH}\|k\|_{\cH}.
\end{align*}
Consequently, \eqref{eq:transported-coefficient-bounds},
\eqref{eq:Phi-growth}, \eqref{eq:Galerkin-lin},
\eqref{eq:frozen-directions}, \eqref{eq:LpX-Galerkin}, and
\eqref{Strongorder-Galerkin} yield
\begin{equation}\label{eq:alternative-linear-bound}
\big\|
\big[(\partial_s+\mathcal L^A)\widehat\Psi_i^{(\ell)}\big]
(s,X_s,Y_s^m;t)
\big\|_{L^1(\Omega)}
\le C.
\end{equation}

By \eqref{eq:local-Ito-operator} and
$
(\mathcal L^A\Theta)(x,y)
=D_x\Theta(x,y)Ax+D_y\Theta(x,y)Ay
$, we have
\begin{align*}
&\big((\mathscr L_{m,s}-\mathcal L^A)\Theta\big)(s,x,y)
=D_x\Theta(s,x,y)F(x)+D_y\Theta(s,x,y)f_s^m\\
&\quad+\frac12\mathrm{tr}_Q\!\left(
\Sigma(x)^*D_{xx}\Theta(s,x,y)\Sigma(x)\right)
+\mathrm{tr}_Q\!\left(
\Sigma(x)^*D_{xy}\Theta(s,x,y)\sigma_s^m\right)\\
&\quad+\frac12\mathrm{tr}_Q\!\left(
(\sigma_s^m)^*D_{yy}\Theta(s,x,y)\sigma_s^m\right).
\end{align*}
By \eqref{eq:LpX-Galerkin}, \eqref{Strongorder-Galerkin}, and
\eqref{eq:fsigma-moment}, for every \(p\ge2\),
\[
\|X_s\|_{L^p(\Omega,\cH^1)}
+\|Y_s^m\|_{L^p(\Omega,\cH)}
+\|f_s^m\|_{L^p(\Omega,\cH)}
+\|\sigma_s^m\|_{L^p(\Omega,\mathscr L_2(H_Q,\cH))}
\le C_p.
\]
Estimate \eqref{eq:transported-coefficient-bounds},
\eqref{eq:frozen-directions}, \eqref{eq:Galerkin-lin},
\eqref{eq:Phi-growth},
\eqref{eq:fsigma-moment}, and Proposition \ref{prop:varphiY}
therefore yield
\begin{equation}\label{eq:alternative-remainder-bound}
\big\|
\big[(\mathscr L_{m,s}-\mathcal L^A)
\widehat\Psi_i^{(\ell)}\big](s,X_s,Y_s^m;t)
\big\|_{L^1(\Omega)}
\le C.
\end{equation}
Finally,
$
\partial_s+\mathscr L_{m,s}
=(\partial_s+\mathcal L^A)
+(\mathscr L_{m,s}-\mathcal L^A),
$
so \eqref{eq:alternative-linear-bound} and
\eqref{eq:alternative-remainder-bound} prove
\eqref{eq:transported-generator-bound}.
\end{proof}
The following lemma compares the transported functions
\(\widehat\Psi_i^{(\ell)}\) with the original functions
\(\Psi_i^{(\ell)}\) at \(s=t_i\).
\begin{lemma}[Comparison at \(s=t_i\)]\label{lem:transport-comparison}
Under the assumptions of Proposition \ref{prop:one-step}, for
\(\ell=1,\ldots,5\),
\begin{align}\label{eq:transported-function-comparison}
\big\|\widehat\Psi_i^{(\ell)}(t_i,X_{t_i},
Y_{t_i}^m;t)
-\Psi_i^{(\ell)}(t_i,X_{t_i},Y_{t_i}^m)
\big\|_{L^1(\Omega)}
\le C(t-t_i)^{\alpha-1}.
\end{align}
The constant \(C\) is independent of \(i\), \(m\), and \(N\).
\end{lemma}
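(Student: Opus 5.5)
We compare the two families term by term at the common point $(s,x,y)=(t_i,X_{t_i},Y_{t_i}^m)$, with $\delta:=t-t_i\le T/m$. The only differences are the backward transports $S_{-\delta}$ acting on the coefficient directions. The key tool is the shifted group estimate \eqref{eq:St-IN}, applied in the form
\[
\|(S_{-\delta}-I)z\|_{\cH}\le C\delta^{\alpha-1}\|z\|_{\cH^{\alpha-1}},\qquad
\|(S_{-\delta}-I)z\|_{\cH^1}\le C\delta^{\alpha-1}\|z\|_{\cH^{\alpha}}.
\]
Here $\beta=\alpha-1\in(0,1]$, and $S_{-\delta}=S_\delta^{-1}$ obeys the same bound since the proof of \eqref{eq:sinop} is symmetric in $t$. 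Every difference $\widehat\Psi_i^{(\ell)}-\Psi_i^{(\ell)}$ is multilinear in the directions, so we telescope it into sums in which exactly one factor is a difference. Each such term is then bounded by \eqref{eq:trace}, Proposition~\ref{prop:varphiY} for $\partial_yv,\partial_{xy}v,\partial_{yy}v$, and H\"older's inequality with the moment bounds \eqref{eq:LpX-Galerkin}, \eqref{Strongorder-Galerkin}, \eqref{eq:Xalp-Galerkin}, \eqref{eq:frozen-directions}.

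For $\ell=1$ and $\ell=4$ the difference involves only $(S_{-\delta}-I)\phi_G(X_{t_i})$ in $\cH$ (respectively $\mathscr L_2(H_Q,\cH)$). This is $\le C\delta^{\alpha-1}\|\phi_G(X_{t_i})\|_{\cH^{\alpha-1}}$, which has all moments by \eqref{eq:frozen-directions}. For $\ell=4$ we use the bilinear splitting $B(a',a')-B(a,a)=B(a'-a,a')+B(a,a'-a)$.

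For $\ell=2$ we must additionally compare $\Sigma_{t_i,t}(x)=S_{-\delta}\Sigma(S_\delta x)$ with $\Sigma(x)$ in $\mathscr L_2(H_Q,\cH^1)$, since $\partial_{xy}v$ needs its $x$-direction in $\cH^1$. We write
\[
\Sigma_{t_i,t}(x)-\Sigma(x)=(S_{-\delta}-I)\Sigma(S_\delta x)+\big(\Sigma(S_\delta x)-\Sigma(x)\big).
\]
The first piece is bounded by $C\delta^{\alpha-1}\|\Sigma(S_\delta x)\|_{\mathscr L_2(H_Q,\cH^\alpha)}\le C\delta^{\alpha-1}(1+\|x\|_{\cH^\alpha})$ via \eqref{eq:Galerkin-Fz}. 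The second is bounded, via \eqref{eq:Galerkin-Lip}, by $C\|[(S_\delta-I)x]_1\|_H\le C\delta\|x\|_{\cH^1}$. Evaluated at $x=X_{t_i}$, \eqref{eq:Xalp-Galerkin} gives moments, and we obtain $C\delta^{\alpha-1}$ in every $L^p$.

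For $\ell=3$ and $\ell=5$ we need to compare the transported $\Phi_{\Sigma,t_i,t}$ and $\phi_{\Sigma,t_i,t}$ with the originals, but only in $\mathscr L_2(H_Q,\cH)$. Write, e.g.,
\[
\phi_{\Sigma,t_i,t}(x)-\phi_\Sigma(x)=(S_{-\delta}-I)\phi_\Sigma(S_\delta x)+\big(\phi_\Sigma(S_\delta x)-\phi_\Sigma(x)\big).
\]
The first piece is handled by the $\cH^{\alpha-1}$ bound on $\phi_\Sigma$ from \eqref{eq:frozen-directions}, which holds pointwise as $\|\phi_\Sigma(z)\|_{\mathscr L_2(H_Q,\cH^{\alpha-1})}\le C(1+\|z\|_{\cH^\alpha})$ by \eqref{eq:Abound}, \eqref{eq:Galerkin-Fz}, \eqref{eq:DG1}, together with the isometry of $S_\delta$ on $\cH^\alpha$. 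The second piece uses the Lipschitz-type bound \eqref{eq:Phi-limit-Lipschitz}, which gives $C(1+\|x\|_{\cH^1})\|(S_\delta-I)x\|_{\cH^1}\le C\delta^{\alpha-1}(1+\|x\|_{\cH^1})\|x\|_{\cH^\alpha}$. For $\Phi_\Sigma$ there is the extra $y$-argument, which contributes $D\Sigma(S_\delta x)(S_\delta-I)y$ in addition to a similar $x$-difference. By \eqref{eq:DG1} this term is controlled by $\|[(S_\delta-I)y]_1\|_H$. Here we use that $y=Y_{t_i}^m$ is controlled in $\cH^1$ by \eqref{eq:ACLW-Galerkin}, so this is $\le C\delta\|y\|_{\cH^1}\le C m^{-1}m^{2-\alpha}=Cm^{-(\alpha-1)}$. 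This does not read as $(t-t_i)^{\alpha-1}$ directly, so we interpolate \eqref{eq:St-IN} with $\beta=\alpha-1$ between $\cH$ and $\cH^1$: this gives $\|(S_\delta-I)y\|_{\cH}\le C\delta^{\alpha-1}\|y\|_{\cH^{\alpha-1}}$, and by interpolating \eqref{eq:ACLW-Galerkin} with \eqref{Strongorder-Galerkin}, $\|Y^m\|_{L^p(\Omega,\cH^{\alpha-1})}\le C m^{(2-\alpha)(\alpha-1)}$.

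\textbf{Main obstacle.} This $y$-term is the step I expect to be the main obstacle. If the $y$-term does not close as a clean power of $\delta$, the fallback is to note that in \eqref{eq:one-step} it is only used integrated against $\vartheta_m$ over a cell, so a bound $Cm^{-(\alpha-1)}$ suffices; we then state the estimate in that form, uniformly in $t\in[t_i,t_{i+1}]$. All remaining terms are routine products of an $O(\delta^{\alpha-1})$ difference in $L^{p}$ with polynomial-moment factors.
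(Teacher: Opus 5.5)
\textbf{Gap: the $y$-transport term in $\ell=3$ is not closed.} Your overall strategy matches what the paper intends, and its omitted proof cites essentially the same tools: telescope each difference so that one factor carries either $S_{-\delta}-I$ or a coefficient increment, then use \eqref{eq:St-IN}, \eqref{eq:frozen-directions}, \eqref{eq:phi-Phi-derivative-bounds}, Lemma~\ref{lem:FSigN}, Proposition~\ref{prop:varphiY} and H\"older with the moment bounds. Your treatment of $\ell=1,2,4,5$ is fine, as is your handling of the $x$-increments in $\ell=3$.

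The gap is the step you flagged yourself: the term $D\Sigma(S_\delta x)(S_\delta-I)y$ at $y=Y_{t_i}^m$ in $\ell=3$. Your interpolation with $\beta=\alpha-1$ does not close. It gives $C\delta^{\alpha-1}m^{(2-\alpha)(\alpha-1)}$, which is not bounded by $C\delta^{\alpha-1}$ uniformly in $m$ when $\alpha<2$. Even after using $m\le T/\delta$, you only get $\delta^{(\alpha-1)^2}$. In general, $\delta^\beta\|y\|_{\cH^\beta}\le C\delta^\beta m^{\beta(2-\alpha)}\le C\delta^{\beta(\alpha-1)}$, so the best choice is $\beta=1$, not $\beta=\alpha-1$. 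Your fallback replaces the lemma by the weaker bound $Cm^{-(\alpha-1)}$. That would still suffice for $\mathcal E_{\rm cent}$ in Theorem~\ref{tho:Galerkin-main}, but it is not the stated lemma, nor the form in which Proposition~\ref{prop:one-step} is stated.

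\textbf{The fix is already in your first computation.} You lost the $\delta$-dependence by replacing $\delta$ with $T/m$ too early. Since $0<\delta=t-t_i\le T/m$ means $m\le T/\delta$, and $2-\alpha\ge0$, you get from \eqref{eq:DG1}, \eqref{eq:St-IN} with $\beta=1$, and \eqref{eq:ACLW-Galerkin}:
\[
\big\|D\Sigma(S_\delta X_{t_i})(S_\delta-I)Y_{t_i}^m\big\|_{L^p(\Omega,\mathscr L_2(H_Q,\cH))}
\le C\delta\,\|Y_{t_i}^m\|_{L^p(\Omega,\cH^1)}
\le C\delta\, m^{2-\alpha}
\le CT^{2-\alpha}\delta^{\alpha-1}.
\]
The case $\delta=0$ is trivial. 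This is why Lemma~\ref{lem:ACLW} appears among the tools for this lemma.

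One further piece in $\ell=3$ should be stated explicitly. In the first piece $(S_{-\delta}-I)\Phi_\Sigma(S_\delta x,S_\delta y)$, the summand $D\Sigma(S_\delta x)S_\delta y$ lies in $\mathscr L_2(H_Q,\cH^1)$ with norm at most $C\|y\|_{\cH}$. Its $S_{-\delta}-I$ increment in $\mathscr L_2(H_Q,\cH)$ is therefore $O(\delta)$. With these two points, your argument proves the lemma as stated.
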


\begin{proof}
The estimate follows from \eqref{eq:St-IN},
\eqref{eq:frozen-directions},
\eqref{eq:phi-Phi-derivative-bounds},
Lemmas~\ref{lem:FSigN} and \ref{lem:ACLW},
and Proposition~\ref{prop:varphiY}.
We omit the details for brevity.
\end{proof}

\begin{proof}[Proof of Proposition \ref{prop:one-step}]
The estimates
\eqref{eq:transported-coefficient-bounds},
\eqref{eq:fsigma-moment}, \eqref{eq:frozen-directions}, and
Proposition \ref{prop:varphiY} show that the stochastic integral in
the following application of It\^o's formula is square integrable.
Conditionally on $\mathscr F_{t_i}$, the frozen directions appearing in $\widehat\Psi_i^{(\ell)}$ are deterministic parameters.  Hence the finite-dimensional It\^o formula applies.
Applying \eqref{eq:dTheta} conditionally to
\(\widehat\Psi_i^{(\ell)}(\cdot,\cdot,\cdot;t)\) gives
\begin{align*}
&\E\!\left[
\widehat\Psi_i^{(\ell)}(t,X_t,Y_t^m;t)
-\widehat\Psi_i^{(\ell)}(t_i,X_{t_i},Y_{t_i}^m;t)
\mid\mathscr F_{t_i}\right]\\
&\quad=\int_{t_i}^t
\E\!\left[
\big[(\partial_s+\mathscr L_{m,s})
\widehat\Psi_i^{(\ell)}\big](s,X_s,Y_s^m;t)
\mid\mathscr F_{t_i}
\right]\ud s.
\end{align*}
Taking expectations in this identity and using the tower property, it follows from 
Lemma \ref{lem:transport-evolution} that
\begin{align*}
&\left|\E\!\left[
\widehat\Psi_i^{(\ell)}(t,X_t,Y_t^m;t)
-\widehat\Psi_i^{(\ell)}(t_i,X_{t_i},Y_{t_i}^m;t)\right]\right|
\le C(t-t_i).
\end{align*}
Since
\(\widehat\Psi_i^{(\ell)}(t,\cdot,\cdot;t)
=\Psi_i^{(\ell)}(t,\cdot,\cdot)\) and $\alpha\in(1,2]$,
taking expectations in
\eqref{eq:transported-increment-decomposition}, and using the tower
property, we obtain that for each $\ell=1,\ldots,5$,
\begin{align*}
&\left|\E\left[
\Psi_i^{(\ell)}(t,X_t,Y_t^m)
-\Psi_i^{(\ell)}(t_i,X_{t_i},Y_{t_i}^m)\right]\right|\\
&\quad\le C(t-t_i)
+\left\|
\widehat\Psi_i^{(\ell)}(t_i,X_{t_i},Y_{t_i}^m;t)
-\Psi_i^{(\ell)}(t_i,X_{t_i},Y_{t_i}^m)
\right\|_{L^1(\Omega)}\le C(t-t_i)^{\alpha-1}.
\end{align*}
Applying Lemma~\ref{lem:transport-comparison} yields
\eqref{eq:Psiiel} and completes the proof of
Proposition~\ref{prop:one-step}.
\end{proof}

\section{Numerical experiment}\label{S:numerical-experiments}
We illustrate the first-order weak convergence and the improvement obtained
by Richardson extrapolation for a nonlinear stochastic wave equation.  To be specific, we consider the stochastic sine--Gordon equation
\begin{equation}\label{eq:numerical-sine-gordon-compact}
  \begin{aligned}
    \partial_{tt}u(t,x)
      &=\partial_{xx}u(t,x)-8\sin\bigl(u(t,x)\bigr)+\dot W^{Q}(t,x),\quad &&0<t\le T,\quad 0<x<1,\\
    u(t,0)&=u(t,1)=0,
        &&0\le t\le T,\\
    u(0,\cdot)&=0.5\,\eta_1,
        \partial_tu(0,\cdot)=0,\quad&& T=1.
  \end{aligned}
\end{equation}
Here
$\eta_k(x)=\sqrt{2}\sin(k\pi x)$ for $k\ge1$, and 
$W^{Q}(t,x)=\sum_{j=1}^{\infty}\sqrt{q_j}\,\beta_j(t)\eta_j(x)$ with 
  $q_j=j^{-4}$,
where \((\beta_j)_{j\ge1}\) are independent standard Brownian motions.
In this setting, Assumptions~\ref{asp:F}--\ref{asp:fg3} are fulfilled with
\(\alpha=2\). For \(z=(z_1,z_2)\), we use the bounded smooth functional
\[
  \psi(z)=\tanh\!\left(
    0.1\,\langle z_1,\eta_1\rangle_{L^2(0,1)}
  \right),
  \qquad
\]

We discretize the equation \eqref{eq:numerical-sine-gordon-compact} in space by the spectral Galerkin method
with \(N\) sine modes and truncate the noise to \(J\) modes.
The resulting system is integrated in time using the exponential
Euler scheme with step size \(\tau=T/m\).
To couple the simulations at different time resolutions, we generate
the Brownian increments for each noise mode on a common finest grid
with \(m_{\mathrm{ref}}=3072\) time steps.  The Brownian increments on each coarser time grid are obtained by
summing the fine-grid increments over the corresponding time intervals.
For a realization \(\omega\) of the truncated Wiener process
\(W^{Q,J}=\sum_{j=1}^J\sqrt{q_j}\,\beta_j(t)\eta_j(x)\), let \(\widetilde X_T^\ell(\omega)\) denote the numerical
solution at time \(T\) obtained with \(\ell\) time steps.  For this
realization, we use the fine-grid Richardson value
\[
  \Psi_{\mathrm{ref}}(\omega)
  =
  2\psi\bigl(\widetilde X_T^{m_{\mathrm{ref}}}(\omega)\bigr)
  -
  \psi\bigl(\widetilde X_T^{m_{\mathrm{ref}}/2}(\omega)\bigr)
\]
as the reference value.
The signed differences for the ordinary and Richardson weak approximations
are, respectively,
\[
  \begin{aligned}
    D_m^{\mathrm{ord}}(\omega)
      &=
      \psi\bigl(\widetilde X_T^{3m}(\omega)\bigr)
      -
      \Psi_{\mathrm{ref}}(\omega),\\
    D_m^{\mathrm{Rich}}(\omega)
      &=
      2\psi\bigl(\widetilde X_T^{2m}(\omega)\bigr)
      -
      \psi\bigl(\widetilde X_T^m(\omega)\bigr)
      -
      \Psi_{\mathrm{ref}}(\omega).
  \end{aligned}
\]
\begin{figure}[t]
\centering
\begin{minipage}[t]{0.49\textwidth}
    \centering
    \includegraphics[width=\linewidth]
    {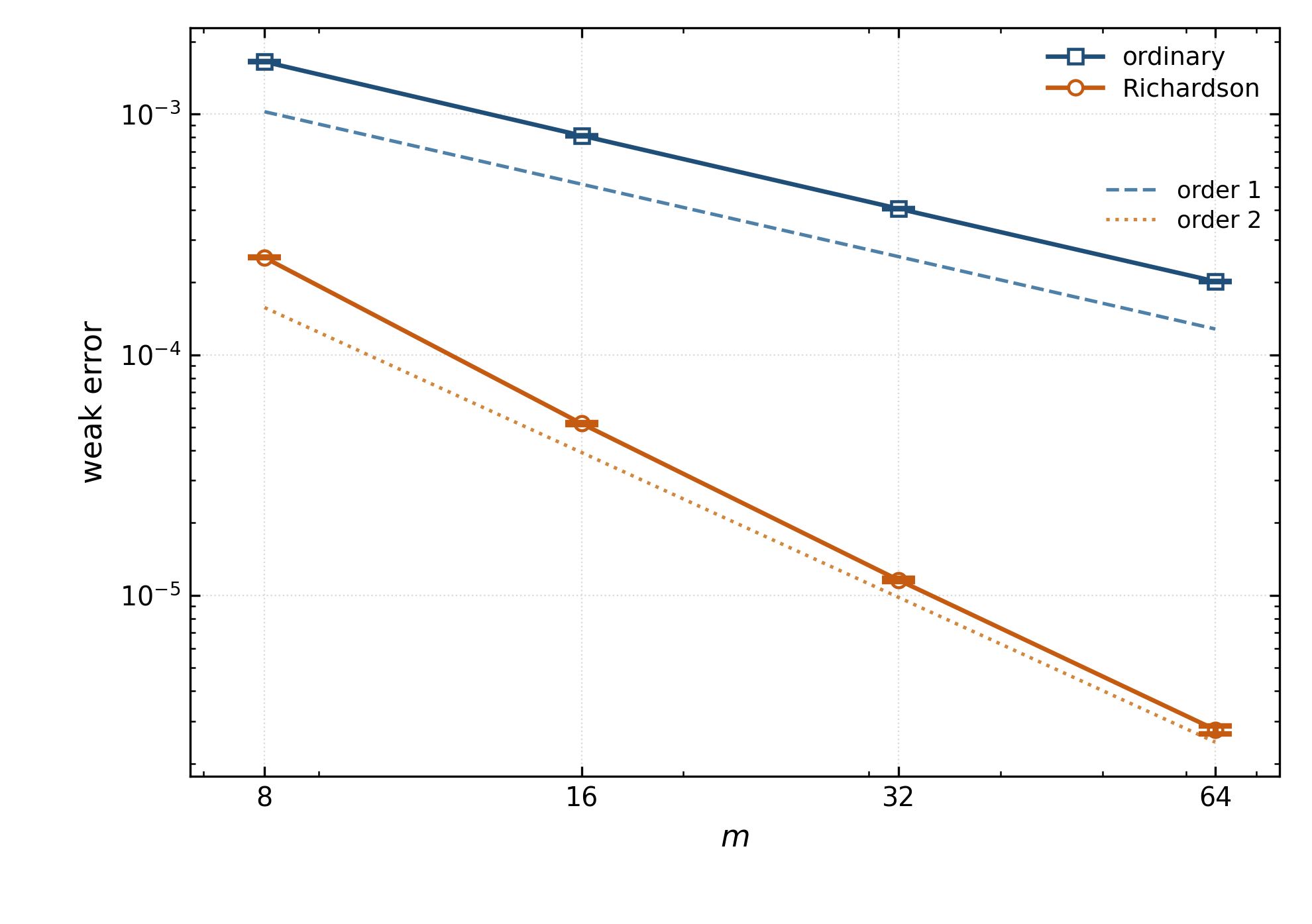}
\end{minipage}
\hfill
\begin{minipage}[t]{0.49\textwidth}
    \centering
    \includegraphics[width=\linewidth]
    {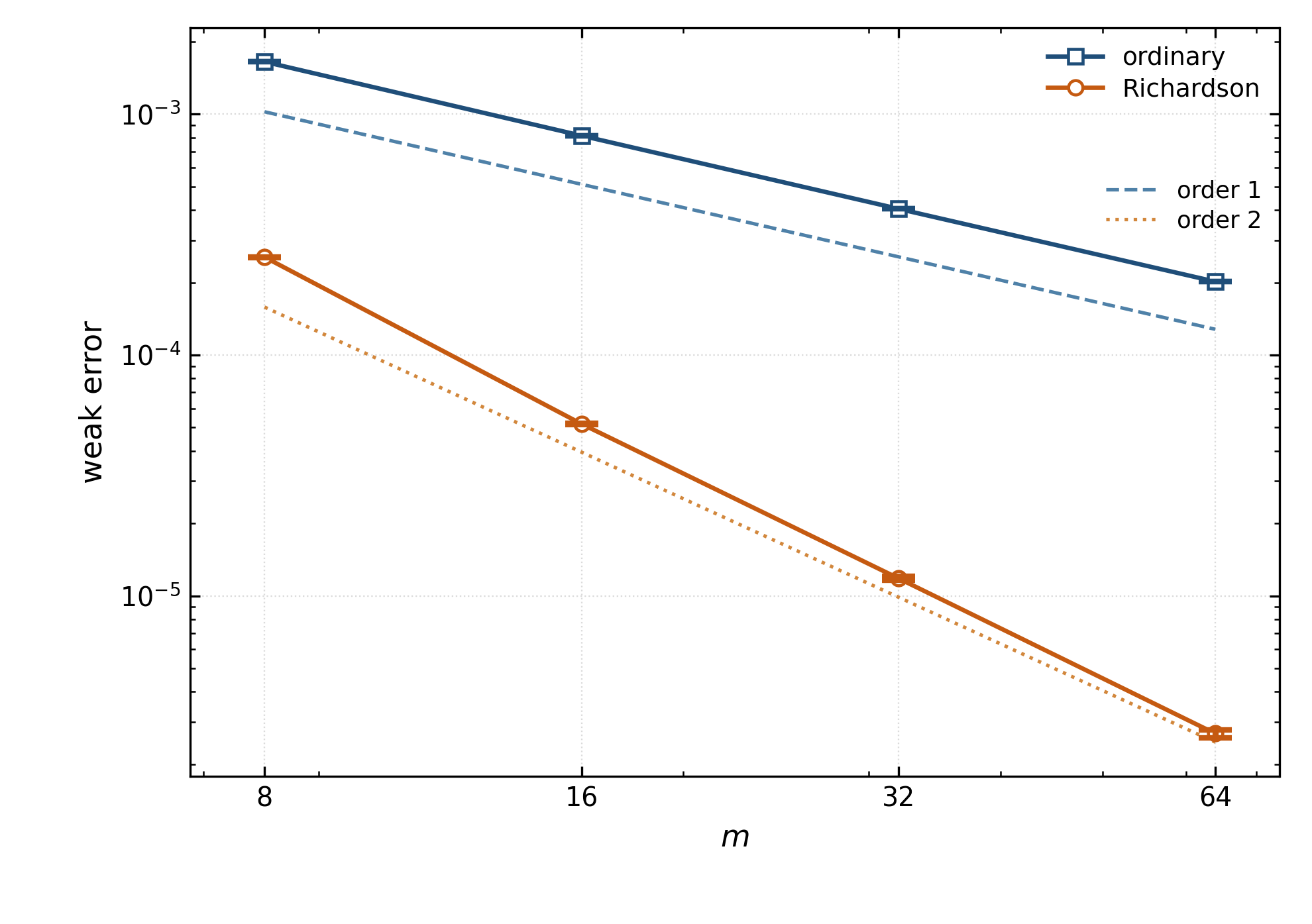}
\end{minipage}
\caption{Weak errors of the ordinary (blue squares) and
Richardson (orange circles) approximations for
$N=J=16$ (left) and $N=J=32$ (right).
Dashed and dotted lines indicate orders one and two,
respectively, while error bars represent 95\%
Monte Carlo confidence intervals.}
\label{fig:sine-gordon-ci-compact}
\end{figure}

To reduce the sampling variance, we use antithetic variates.  We generate
\(M_{\mathrm{pair}}=60{,}000\) independent realizations
\(\omega_i\) of the truncated Wiener process.  For each realization
\(\omega_i\), we evaluate the numerical scheme twice: once with the
original Brownian increments and once with their negatives, using the
same initial condition and all other parameters.  For
\(\star\in\{\mathrm{ord},\mathrm{Rich}\}\), the corresponding pair
observation is
\[
  Z_{m,i}^{\star}
  =
  \frac12\left[
    D_m^{\star}(\omega_i)
    +
    D_m^{\star}(-\omega_i)
  \right].
\]
Let \(b_m^\star\) and \(s_m^\star\) denote the sample mean and sample
standard deviation of the pair observations
\(\{Z_{m,i}^\star\}_{i=1}^{M_{\mathrm{pair}}}\), respectively.  Thus,
the estimated signed temporal error is \(b_m^\star\), and the
corresponding 95\% Monte Carlo confidence interval has half-width
\(
  1.96\,\frac{s_m^\star}{\sqrt{M_{\mathrm{pair}}}}.
\)
Figure~\ref{fig:sine-gordon-ci-compact} displays the absolute temporal
error estimates \(\lvert b_m^\star\rvert\) relative to the fine-grid
reference for
\(\star\in\{\mathrm{ord},\mathrm{Rich}\}\) and
\(m\in\{8,16,32,64\}\). Increasing the spatial and noise truncation levels
from $N=J=16$ to $N=J=32$ yields nearly identical
temporal weak-error estimates and observed convergence
rates.
Figure~\ref{fig:sine-gordon-ci-compact} confirms the expected first- and second-order weak convergence
rates of the ordinary and Richardson approximations, respectively, with
Richardson extrapolation yielding substantially smaller errors at the same time-stepping cost.

%
\appendix
\section{Proof of Proposition~\ref{prop:varphiY}}\label{S:auxiliary-proofs}

Throughout this section, we continue to fix \(N\) and use
the convention introduced at the beginning of Section
\ref{S:proof}; in particular, all Galerkin superscripts \((N)\) are
suppressed and all generic constants are independent of \(N\).
In this section, we prove Proposition~\ref{prop:varphiY},
which establishes regularity estimates
for the solution to the
finite-dimensional Kolmogorov equation \eqref{eq:Kolu}.

This solution of \eqref{eq:Kolu} is represented by \eqref{eq:utx} in terms of
the process $(\mathbb X,\mathbb Y)$. Recall that $(\mathbb X,\mathbb Y)\triangleq (\mathbb X^{(N)},\mathbb Y^{(N)})$
denotes the solution to the coupled equations
\eqref{eq:Galerkin-systemX} and \eqref{eq:Galerkin-systemY}
with arbitrary initial data
$(x,y)\in\cH_N\times\cH_N$.
The moment
bounds for \((\mathbb X,\mathbb Y)\) are recorded at the beginning of
the proof of Lemma \ref{lem:flow-variations}, which establishes the
variational estimates for
\((\mathbb X,\mathbb Y)\) with respect to the initial data.  The proof
uses projected linear stochastic evolution equations of the form
\eqref{eq:Z}, for which we first establish the following estimate.
\begin{lemma}\label{lem:Z}
Assume that
Assumption \ref{asp:F} holds and that the functions
$f$ and 
$\sigma$ are continuously differentiable with bounded derivatives.
Furthermore, assume that $\widetilde F$,
$\{\widetilde\Sigma^{i}\}_{i=1}^2$ are
predictable $\cH_N$- and
$\mathscr{L}_2(H_Q,\cH_N)$-valued processes, and let $Z$ solve
\begin{align}\label{eq:Z}
\ud Z_t=\big(AZ_t
+DF(\mathbb X_t)Z_t+\widetilde F_t\big)\ud t+\big(D\Sigma(\mathbb X_t)Z_t
+\widetilde\Sigma_t^{1}\big)\ud W_t
+\widetilde\Sigma_t^{2}\ud\widetilde W_t
\end{align}
for $t\in(0,T]$.
Fix \(r\in\{0,1\}\) and \(p\ge2\), and assume that
\(Z_0\in L^p(\Omega,\cH^r)\).  
Then there exists $C:=C(p,T)$, independent of
$N$, such that for any $t\in[0,T]$,
\begin{align*}
\E\Big[\|Z\|_{L^\infty(0,t;\mathcal{H}^r)}^p\Big]
\le C \E\|Z_0\|_{\mathcal{H}^r}^p+C\int_0^t\E\Big[
\|\widetilde F_s\|_{\cH^r}^p
+\sum_{i=1}^2\|\widetilde\Sigma_s^{i}\|_{\mathscr{L}_2(H_Q,\cH^r)}^p
\Big]\ud s,
\end{align*}
provided that the integral on the right hand side is finite.
\end{lemma}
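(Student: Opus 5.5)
We prove Lemma~\ref{lem:Z} from the mild formulation of \eqref{eq:Z} and a Gronwall argument; the only structural input is that the first-order coefficients act boundedly on $\cH^r$, uniformly in $N$. Since $Z$ takes values in the finite-dimensional space $\cH_N$ and the coefficients are linear in $Z$ with bounded linear parts, \eqref{eq:Z} has a unique strong solution, and it coincides with the mild solution
\[
Z_t=S_tZ_0+\int_0^tS_{t-s}\big(DF(\mathbb X_s)Z_s+\widetilde F_s\big)\ud s+\int_0^tS_{t-s}\big(D\Sigma(\mathbb X_s)Z_s+\widetilde\Sigma_s^1\big)\ud W_s+\int_0^tS_{t-s}\widetilde\Sigma_s^2\ud\widetilde W_s .
\]
First, the isometry of $S_t$ in the ambient $\cH^r$-norm gives $\sup_{u\le t}\|S_uZ_0\|_{\cH^r}=\|Z_0\|_{\cH^r}$. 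Second, we apply the convolution bound \eqref{eq:convolution-bounds} with this $r$ to each of the three integrals; for the $\widetilde W$-integral we use the remark following \eqref{eq:convolution-bounds}.

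The key step is to bound the linear parts in $\cH^r$ for $r\in\{0,1\}$, independently of $N$ and of $\mathbb X$. By \eqref{eq:DG1} with $s=r$,
\[
\|DF(\mathbb X_s)Z_s\|_{\cH^r}+\|D\Sigma(\mathbb X_s)Z_s\|_{\mathscr L_2(H_Q,\cH^r)}\le C\|[Z_s]_1\|_H\le C\|Z_s\|_{\cH^r},
\]
where the last inequality uses $\|z_1\|_H\le\|z_1\|_{H^r}$ for $r\ge0$. Only boundedness of $f'$ and $\sigma'$ together with Assumption~\ref{asp:F}(2) enter here, exactly as in the proof of \eqref{eq:DG1}. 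This is consistent with the hypotheses of the lemma, which do not assume Assumption~\ref{asp:fg3}. The bound is the one place where uniformity in $N$ could fail, and it holds because the derivative of a Nemytskii map lands in the velocity component with values in $H$, which equals $H^{r-1}$ for $r=1$.

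Combining these estimates, with $\varphi(t):=\E\|Z\|^p_{L^\infty(0,t;\cH^r)}$ we obtain
\[
\varphi(t)\le C\E\|Z_0\|_{\cH^r}^p+C\int_0^t\varphi(s)\ud s+C\int_0^t\E\Big[\|\widetilde F_s\|_{\cH^r}^p+\sum_{i=1}^2\|\widetilde\Sigma_s^i\|_{\mathscr L_2(H_Q,\cH^r)}^p\Big]\ud s .
\]
To apply Gronwall's inequality we need $\varphi(t)<\infty$ a priori. We secure this with a stopping-time localization $\tau_R=\inf\{t:\|Z_t\|_{\cH^r}\ge R\}$: we run the argument for $Z_{\cdot\wedge\tau_R}$, which is finite because $Z$ has continuous paths in finite dimensions, and let $R\to\infty$ by monotone convergence. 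Gronwall's inequality then yields the claim with $C=C(p,T)$. The main point to check is that the constant in \eqref{eq:convolution-bounds} is $N$-independent, which holds since only the isometry of $S^{(N)}_t$ and the BDG inequality are used.
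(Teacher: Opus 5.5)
Your proposal is correct and follows the paper's proof. Both use the mild formulation, the $N$-uniform convolution bound \eqref{eq:convolution-bounds}, the estimate \eqref{eq:DG1} to control the linear parts by $C\|Z_s\|_{\cH^r}$ (which, as you note, needs only bounded $f',\sigma'$ and Assumption~\ref{asp:F}(2)), and then Gronwall's inequality. Your stopping-time localization, which guarantees a priori finiteness before applying Gronwall, is a detail the paper leaves implicit, and you handle it correctly.
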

\begin{proof}
By the mild form of \eqref{eq:Z}, \eqref{eq:convolution-bounds} and \eqref{eq:DG1}, we obtain that for $r=0,1$,
\begin{align*}
\E\sup_{0\le u\le t}\|Z_u\|_{\cH^r}^p
&\le C\E\|Z_0\|_{\cH^r}^p
+C\int_0^t
\E\sup_{0\le v\le s}\|Z_v\|_{\cH^r}^p\ud s\\
&\quad+C\int_0^t\E\Big[
\|\widetilde F_s\|_{\cH^r}^p
+\sum_{i=1}^2
\|\widetilde\Sigma_s^{i}\|_{\mathscr L_2(H_Q,\cH^r)}^p
\Big]\ud s .
\end{align*}
Finally, applying Gronwall's inequality yields the assertion.
\end{proof}

\begin{lemma}[Uniform variation estimates]\label{lem:flow-variations}
Let Assumptions \ref{asp:F} and \ref{asp:fg3} hold.  For every
\(p\ge2\), there exists \(C:=C(p,T)>0\), independent of \(N\), such
that, for \(x,y\in\cH_N\), \(1\le a\le3\), and
\(\ell_1,\ldots,\ell_a\in\cH_N\),
\begin{align}
\|D_x^a\mathbb X_t(\ell_1,\ldots,\ell_a)\|_
 {L^p(\Omega,\cH^1)}
&\le C\prod_{j=1}^a\|\ell_j\|_{\cH^1},\quad
t\in[0,T].
\label{eq:X-variation-lemma}
\end{align}
Moreover, if \(0\le a\le3\), \(b\in\{0,1\}\), and
\(1\le a+b\le4\), then, for
\(\ell_1,\ldots,\ell_a,h_1,\ldots,h_b\in\cH_N\) and
\(t\in[0,T]\),
\begin{equation}
\begin{aligned}
&\|D_x^aD_y^b\mathbb Y_t
(\ell_1,\ldots,\ell_a;h_1,\ldots,h_b)\|_
 {L^p(\Omega,\cH)}\\
&\quad\le C(1+\|x\|_{\cH^1}+\|y\|_{\cH})^a
\prod_{j=1}^a\|\ell_j\|_{\cH^1}
\prod_{j=1}^b\|h_j\|_{\cH}.
\label{eq:Y-variation-lemma}
\end{aligned}
\end{equation}
Every derivative of \(\mathbb X\) containing
a \(y\)-direction and every derivative of \(\mathbb Y\) containing
at least two \(y\)-directions vanish.
\end{lemma}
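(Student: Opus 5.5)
We prove the statement by differentiating the Galerkin systems \eqref{eq:Galerkin-systemX}--\eqref{eq:Galerkin-systemY} with respect to the initial data, by induction on the total order $a+b$. Each variation solves a linear equation of the form \eqref{eq:Z}, and we apply Lemma~\ref{lem:Z} with $r=1$ for $\mathbb X$-variations and $r=0$ for $\mathbb Y$-variations. The structural claims come first and are immediate. The equation for $\mathbb X$ does not involve $y$, so $D_y\mathbb X\equiv0$. The equation for $\mathbb Y$ is affine in $\mathbb Y$, and its linear part $DF(\mathbb X)\mathbb Y$, $D\Sigma(\mathbb X)\mathbb Y$ has coefficients independent of $y$. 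Hence $D_y\mathbb Y_t h$ solves a linear homogeneous equation in $y$, and every second $y$-derivative solves \eqref{eq:Z} with zero forcing and zero initial datum, so it vanishes.

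For \eqref{eq:X-variation-lemma}, the first variation $\eta=D_x\mathbb X\ell$ solves \eqref{eq:Z} with $Z_0=\ell$, $\widetilde F=\widetilde\Sigma^i=0$, and Lemma~\ref{lem:Z} with $r=1$ gives $\|\eta_t\|_{L^p(\cH^1)}\le C\|\ell\|_{\cH^1}$. For $a=2,3$, the variation has zero initial datum and forcing terms of the form $D^jF(\mathbb X)(D^{k_1}\mathbb X,\dots)$, $j\ge2$, and similarly with $\Sigma$. The key point is that these forcings must be bounded in $\cH^1$ and in $\mathscr L_2(H_Q,\cH^1)$. This holds because $D^jF(z)(h^1,\ldots,h^j)=(0,f^{(j)}(z_1)h_1^1\cdots h_1^j)$ and the $\cH^1$ norm of this only requires the $L^2$ norm of the product. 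Since all directions lie in $H^1\hookrightarrow L^\infty$, the product is bounded by $\prod\|h^\ell\|_{\cH^1}$, and Assumption~\ref{asp:F}(2) handles the $\Sigma$ case. Hölder's inequality and the induction hypothesis at higher moments then close the estimate.

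For \eqref{eq:Y-variation-lemma}, we differentiate the $\mathbb Y$ equation. The variation $\zeta=D_x^aD_y^b\mathbb Y$ solves \eqref{eq:Z} in $\cH$, with initial datum $h$ when $(a,b)=(0,1)$ and $0$ otherwise. Its forcing contains three kinds of terms:
\begin{itemize}
\item $D^{j+1}G(\mathbb X)(\text{$x$-variations of }\mathbb X,\ \mathbb Y\text{ or its variations})$;
\item derivatives of $\tfrac T2\phi_G(\mathbb X)$;
\item $D^j\phi_\Sigma(\mathbb X)$ applied to $\mathbb X$-variations, in the $\widetilde W$ term.
\end{itemize}
We estimate these in $\cH$ and $\mathscr L_2(H_Q,\cH)$ using \eqref{eq:DjG0}: the two directions measured in $H$ can be taken to be $\mathbb Y$-type or any direction, and the rest are $\mathbb X$-variations in $\cH^1$. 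For $\phi_G$ we use $D\phi_G(z)q=ADG(z)q-D^2G(z)(q,Az)-DG(z)Aq$, its higher analogues, and \eqref{eq:Abound}. These give $\|AD^jG(z)(\cdot)\|_{\cH}\le\|D^jG(z)(\cdot)\|_{\cH^1}$, bounded by the $\cH^1$ product bound above. The factor $Az$ and the factor $Aq$ cost only $\|z\|_{\cH^1}$ and $\|q\|_{\cH^1}$, respectively. Each differentiation in $x$ that hits $\phi_G$, or $D G(\mathbb X)\mathbb Y$, produces at most one extra linear factor $\|\mathbb X\|_{\cH^1}$ or $\|\mathbb Y\|_{\cH}$. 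Together with \eqref{eq:generic-flow-moments}, this yields the growth $(1+\|x\|_{\cH^1}+\|y\|_{\cH})^a$. Lemma~\ref{lem:Z} with $r=0$ and Hölder's inequality then finish each step.

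The main obstacle is the top order $a=3$ in the $\mathbb Y$-variations. There the forcing involves $D^4G$ and $D^3\phi_G$, which requires four derivatives of $f$ and $\sigma$ as in Assumption~\ref{asp:fg3}. It also requires checking that at most two arguments are only in $H$, since two such arguments is all that \eqref{eq:DjG0} permits. In $D^3\phi_G$, the term $D^4G(z)(q_1,q_2,q_3,Az)$ has $Az$ only in $\cH$ and the $q_i$ in $\cH^1$, so it is admissible. Terms of the form $D^3G(z)(\ldots,Aq_i)$ are also fine, because they contain one $H$-direction. Bookkeeping these cases, including the permutation invariance from Lemma~\ref{lem:FSigN}, is the delicate part.
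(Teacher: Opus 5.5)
Your proposal is correct and follows essentially the same route as the paper: induction on the order of the variation, Lemma~\ref{lem:Z} with $r=1$ for the $\mathbb X$-variations (via the $\cH^1$-valued Nemytskii bound, which is the paper's \eqref{eq:DjG1}) and with $r=0$ for the $\mathbb Y$-variations, and Hölder's inequality with higher-moment induction hypotheses. The derivatives of $\phi_G$ and $\Phi_G$ are handled, as in the paper, through the explicit formulas, \eqref{eq:Abound}, and \eqref{eq:DjG0}, with each term carrying at most one $H$-direction ($Az$, $Aq_\nu$, or a $\mathbb Y$-variation); your growth count is cruder than the paper's partition-based one but gives the same exponent $a$.
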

\begin{proof}
We first record the \(\cH^1\)-valued Nemytskii derivative estimate
needed below: for \(j\in\{2,3,4\}\) and
\(z,h^1,\ldots,h^j\in\cH_N\),
\begin{equation}\label{eq:DjG1}
\|D^jF(z)(h^1,\ldots,h^j)\|_{\cH^1}
+\|D^j\Sigma(z)(h^1,\ldots,h^j)\|_
 {\mathscr L_2(H_Q,\cH^1)}\le C\|h_1^1\|_H
\prod_{\ell=2}^j\|h_1^\ell\|_{H^1}.
\end{equation}
This inequality follows directly from \eqref{eq:projected-DjG},
\eqref{eq:projected-Nemytskii-derivatives},
\(H^1(\mathcal O)\hookrightarrow L^\infty(\mathcal O)\), and
Assumption \ref{asp:F}(2).
The estimate \eqref{eq:DjG1} remains valid after any permutation of the direction
arguments.
Fix \(G\in\{F,\Sigma\}\) and \(z,\xi\in\cH_N\). For directions
\(q_1,q_2,q_3,k\in\cH_N\), differentiating
\eqref{eq:projected-Phi-identities} and using
\eqref{eq:phi-Phi-derivative-bounds}, we obtain the first-derivative
identities and estimates. Differentiating
\(\phi_G\) once and twice more yields
\begin{align*}
D^2\phi_G(z)(q_1,q_2)
&=AD^2G(z)(q_1,q_2)-D^3G(z)(q_1,q_2,Az)\\
&\quad-D^2G(z)(Aq_1,q_2)-D^2G(z)(q_1,Aq_2),\\
D^3\phi_G(z)(q_1,q_2,q_3)
&=AD^3G(z)(q_1,q_2,q_3)
-D^4G(z)(q_1,q_2,q_3,Az)\\
&\quad-\sum_{\nu=1}^3
D^3G(z)(q_1,\ldots,Aq_\nu,\ldots,q_3).
\end{align*}
By \eqref{eq:Abound}, \eqref{eq:DG1}, \eqref{eq:DjG0}, and
\eqref{eq:DjG1}, these identities imply
\begin{align}
&\|D^2\phi_F(z)(q_1,q_2)\|_{\cH}
+\|D^2\phi_\Sigma(z)(q_1,q_2)\|_{\mathscr L_2(H_Q,\cH)}\notag\\
&\quad\le C(1+\|z\|_{\cH^1})
\prod_{\nu=1}^2\|q_\nu\|_{\cH^1},
\label{eq:D2phi-bound}\\
&\|D^3\phi_F(z)(q_1,q_2,q_3)\|_{\cH}
+\|D^3\phi_\Sigma(z)(q_1,q_2,q_3)\|_{\mathscr L_2(H_Q,\cH)}\notag\\
&\quad\le C(1+\|z\|_{\cH^1})
\prod_{\nu=1}^3\|q_\nu\|_{\cH^1}.
\label{eq:D3phi-bound}
\end{align}
Since the function
\(\Phi_G(z,\xi)=DG(z)\xi+\frac T2\phi_G(z)\) is affine in $\xi$, we have
\begin{gather*}
D_{zz}\Phi_G(z,\xi)(q_1,q_2)
=D^3G(z)(q_1,q_2,\xi)
+\frac T2D^2\phi_G(z)(q_1,q_2),\\
D_{z\xi}\Phi_G(z,\xi)(q_1,k)
=D^2G(z)(q_1,k),\qquad D_{\xi\xi}\Phi_G(z,\xi)=0,
\\
D_{zzz}\Phi_G(z,\xi)(q_1,q_2,q_3)
=D^4G(z)(q_1,q_2,q_3,\xi)
+\frac T2D^3\phi_G(z)(q_1,q_2,q_3),
\\
D_{zz\xi}\Phi_G(z,\xi)(q_1,q_2,k)
=D^3G(z)(q_1,q_2,k),
\\
D_{zzz\xi}\Phi_G(z,\xi)(q_1,q_2,q_3,k)
=D^4G(z)(q_1,q_2,q_3,k).
\end{gather*}
By symmetry of the Fréchet derivatives, the above mixed formulas
remain valid after any permutation of the direction
arguments.
All derivatives containing two or more \(\xi\)-directions vanish.
Consequently, if \(0\le a\le3\), \(b\in\{0,1\}\), and
\(1\le a+b\le4\), then
\begin{align}\label{eq:Phi-coordinate-bound}
&\|D_z^aD_\xi^b\Phi_F(z,\xi)
 (q_1,\ldots,q_a;k_1,\ldots,k_b)\|_{\cH}\notag\\
&\quad+\|D_z^aD_\xi^b\Phi_\Sigma(z,\xi)
 (q_1,\ldots,q_a;k_1,\ldots,k_b)\|_
 {\mathscr L_2(H_Q,\cH)}\notag\\
&\qquad\le
C(1+\|z\|_{\cH^1}+\|\xi\|_{\cH})^{\mathbf 1_{\{a\ge1\}}}
\prod_{\mu=1}^a\|q_\mu\|_{\cH^1}
\prod_{\mu=1}^b\|k_\mu\|_{\cH}.
\end{align}
For \(a=0\), we have \(b=1\) and \(D_\xi\Phi_G=DG\), which is
uniformly bounded. For \(a\ge1\), the displayed bound \eqref{eq:Phi-coordinate-bound} allows
linear growth in \((z,\xi)\).
Moreover, by
\eqref{eq:phi-Phi-derivative-bounds}, \eqref{eq:D2phi-bound}, and
\eqref{eq:D3phi-bound}, for
\(1\le a\le3\),
\begin{align}\label{eq:phi-coordinate-bound}
&\|D^a\phi_\Sigma(z)(q_1,\ldots,q_a)\|_
 {\mathscr L_2(H_Q,\cH)}
\le C(1+\|z\|_{\cH^1})
\prod_{\mu=1}^a\|q_\mu\|_{\cH^1}.
\end{align}

To prove \eqref{eq:X-variation-lemma} and
\eqref{eq:Y-variation-lemma}, we next introduce some notation.
Recall that \((x,y)\in\cH_N\times\cH_N\)
denotes the initial data for
\((\mathbb X,\mathbb Y)\).
Let \(I=I_x\mathbin{\cup}I_y\) be a finite collection of directions
in \(\cH_N\), with repeated occurrences of the same direction
treated as distinct entries.
Each \(\ell\in I_x\) specifies differentiation of the flow
with respect to \(x\) in the direction \(\ell\), keeping \(y\) fixed;
each \(h\in I_y\) specifies differentiation with respect to \(y\)
in the direction \(h\), keeping \(x\) fixed.
Assume \(|I_x|\le3\) and \(|I|\le4\), and put
\[
a(I):=|I_x|,\qquad b(I):=|I_y|.
\]For every nonempty \(B\subseteq I\), denote the corresponding
variations by
\[
\mathbb X_t^B:=D_B\mathbb X_t,\qquad
\mathbb Y_t^B:=D_B\mathbb Y_t.
\]
Here \(D_B\) differentiates the flow at \((x,y)\) once in each
labelled direction in \(B\).
Let \(\Pi(I)\) be the set of all partitions of \(I\).
This notation is used in the variational equations below;
see, e.g., \eqref{eq:XtI}.
Denote the product of the direction norms by
\[
\mathcal D_I:=
\prod_{\ell\in I_x}\|\ell\|_{\cH^1}
\prod_{h\in I_y}\|h\|_{\cH}.
\]
Recall that \(\pi\in\Pi(I)\) consists of pairwise disjoint nonempty
blocks \(B\) whose union is \(I\), and that \(|\pi|\) denotes the
number of blocks.
The \(\mathbb X\)-equation \eqref{eq:Galerkin-systemX} is independent of \(y\), whereas the
\(\mathbb Y\)-equation \eqref{eq:Galerkin-systemY} is affine in its initial value \(y\).
Consequently,
\[
\mathbb X_t^I=0\quad\text{if }b(I)\ge1,\qquad
\mathbb Y_t^I=0\quad\text{if }b(I)\ge2.
\]
Here \(b(I)\) counts the directions acting on the initial value \(y\).
For a singleton \(I=\{\ell\}\) containing an \(x\)-direction,
\(\mathbb X_0^I=\ell\) and \(\mathbb Y_0^I=0\).
For a singleton \(I=\{h\}\) with \(h\in I_y\),
\(\mathbb X_0^I=0\) and \(\mathbb Y_0^I=h\).
All variations
of order at least two have zero initial value.

Recall that
\(\mathbb X^I=0\) if \(b(I)\ge1\). When \(b(I)=0\), differentiating the \(\mathbb X\)-equation \eqref{eq:Galerkin-systemX} gives
\begin{align}\label{eq:XtI}
\ud\mathbb X_t^I
={}&\Big[A\mathbb X_t^I+DF(\mathbb X_t)\mathbb X_t^I
+\sum_{\substack{\pi\in\Pi(I),\,|\pi|\ge2}}
D^{|\pi|}F(\mathbb X_t)
(\mathbb X_t^B:B\in\pi)\Big]\ud t\notag\\
&+\Big[D\Sigma(\mathbb X_t)\mathbb X_t^I
+\sum_{\substack{\pi\in\Pi(I),\,|\pi|\ge2}}
D^{|\pi|}\Sigma(\mathbb X_t)
(\mathbb X_t^B:B\in\pi)\Big]\ud W_t .
\end{align}
We first prove \eqref{eq:X-variation-lemma} by induction on $|I|$.
For \(|I|=1\), both sums on the right hand side of
\eqref{eq:XtI} are empty. Lemma \ref{lem:Z} then implies
\eqref{eq:X-variation-lemma}.
Suppose by induction that \eqref{eq:X-variation-lemma} holds
at every moment order \(p\ge2\)
for all nonempty direction sets of cardinality at most \(M\)
, where \(M\in\{1,2\}\).
Let \(|I|=M+1\). Every block \(B\) of a partition
\(\pi\in\Pi(I)\) with \(|\pi|\ge2\) satisfies \(|B|\le M\).
Hence H\"older's inequality, \eqref{eq:DjG1}, and the induction
hypothesis at moment order \(p|\pi|\) yield
\begin{align*}
&\|D^{|\pi|}F(\mathbb X_t)
(\mathbb X_t^B:B\in\pi)\|_{L^p(\Omega,\cH^1)}
+\|D^{|\pi|}\Sigma(\mathbb X_t)
(\mathbb X_t^B:B\in\pi)\|_
 {L^p(\Omega,\mathscr L_2(H_Q,\cH^1))}\\
&\qquad\le
C\prod_{B\in\pi}
\|\mathbb X_t^B\|_{L^{p|\pi|}(\Omega,\cH^1)}
\le C\prod_{B\in\pi}\mathcal D_B
=C\mathcal D_I .
\end{align*}
Applying Lemma \ref{lem:Z} to \eqref{eq:XtI} now proves
\eqref{eq:X-variation-lemma} for \(|I|=M+1\).

We now prove \eqref{eq:Y-variation-lemma} by induction on \(|I|\).
It is enough to consider \(b(I)\le1\), since $\mathbb Y_t^I$ vanishes for \(b(I)\ge2\).
For \(|I|=1\), let first \(I=\{\ell\}\) contain an \(x\)-direction.
The equation \eqref{eq:Galerkin-systemY} for \(\mathbb Y^I\) contains the inhomogeneous
coefficients
\[
D_z\Phi_F(\mathbb X,\mathbb Y)\mathbb X^I,\qquad
D_z\Phi_\Sigma(\mathbb X,\mathbb Y)\mathbb X^I,\qquad
\frac{T}{\sqrt{12}}D\phi_\Sigma(\mathbb X)\mathbb X^I.
\]
By H\"older's inequality,
\eqref{eq:X-variation-lemma},
\eqref{eq:Phi-coordinate-bound}, \eqref{eq:phi-coordinate-bound}, and
\eqref{eq:generic-flow-moments}, their \(L^p\)-norms
are bounded by
$C(1+\|x\|_{\cH^1}+\|y\|_{\cH})\|\ell\|_{\cH^1}.$
Lemma \ref{lem:Z}
therefore yields
\[
\sup_{t\in[0,T]}
\big(\|\mathbb X_t^I\|_{L^p(\Omega,\cH^1)}
+\|\mathbb Y_t^I\|_{L^p(\Omega,\cH)}\big)
\le C(1+\|x\|_{\cH^1}+\|y\|_{\cH})\|\ell\|_{\cH^1}.
\]
If \(I=\{h\}\) contains a \(y\)-direction, its first component is zero
and its second component solves the homogeneous linearized equation
with initial value \(h\). Therefore, Lemma \ref{lem:Z}
provides the
bound \(C\|h\|_{\cH}\). These prove
\eqref{eq:Y-variation-lemma} for \(|I|=1\).

Now suppose that \eqref{eq:Y-variation-lemma} holds, at every
moment order \(p\ge2\), for all nonempty direction sets of
cardinality at most \(M\), where
\(M\in\{1,2,3\}\).
Let \(|I|=M+1\). Direct differentiation
of the \(\mathbb Y\)-equation \eqref{eq:Galerkin-systemY} leads to
\begin{align}\label{eq:second-component-equation}
\ud\mathbb Y_t^I
={}&\Big[A\mathbb Y_t^I
+DF(\mathbb X_t)\mathbb Y_t^I
+\mathcal R_{F,I}(t)\Big]\ud t\notag\\
&+\Big[D\Sigma(\mathbb X_t)\mathbb Y_t^I
+\mathcal R_{\Sigma,I}(t)\Big]\ud W_t
+\widetilde{\mathcal R}_{\Sigma,I}(t)\ud\widetilde W_t .
\end{align}
For \(G\in\{F,\Sigma\}\), we have
\begin{align*}
\mathcal R_{G,I}(t)
&:=\mathbf 1_{\{b(I)=0\}}\Bigg[
D_z\Phi_G(\mathbb X_t,\mathbb Y_t)\mathbb X_t^I+\sum_{\substack{\pi\in\Pi(I),\,|\pi|\ge2}}
D_z^{|\pi|}\Phi_G(\mathbb X_t,\mathbb Y_t)
(\mathbb X_t^B:B\in\pi)\Bigg]\\
&\quad+\sum_{\substack{\pi\in\Pi(I),\ |\pi|\ge2,\ B_0\in\pi\\
b(B)=0\ {\rm for\ every}\ B\in\pi\setminus\{B_0\}}}
D_z^{|\pi|-1}D_\xi\Phi_G(\mathbb X_t,\mathbb Y_t)
\big((\mathbb X_t^B)_{B\in\pi\setminus\{B_0\}};
\mathbb Y_t^{B_0}\big).
\end{align*}
The first sum contains only \(z\)-directions. In every term of the
second sum, \(B_0\) supplies the unique \(\xi\)-direction. Similarly, the remainder $\widetilde{\mathcal R}_{\Sigma,I}(t)$ in \eqref{eq:second-component-equation} is given by
\begin{align*}
\widetilde{\mathcal R}_{\Sigma,I}(t)
:=\mathbf 1_{\{b(I)=0\}}\frac{T}{\sqrt{12}}\Bigg[
D\phi_\Sigma(\mathbb X_t)\mathbb X_t^I
+\sum_{\substack{\pi\in\Pi(I),\,|\pi|\ge2}}
D^{|\pi|}\phi_\Sigma(\mathbb X_t)
(\mathbb X_t^B:B\in\pi)\Bigg].
\end{align*}
Only the derivatives of $\Phi_G$ and $\phi_G$ displayed in
\eqref{eq:phi-coordinate-bound} and
\eqref{eq:Phi-coordinate-bound} occur.

We next estimate all three remainders in \eqref{eq:second-component-equation}. Fix \(p\ge2\).
Estimate \eqref{eq:generic-flow-moments} implies
\[
\|(1+\|\mathbb X_t\|_{\cH^1}+\|\mathbb Y_t\|_{\cH})\|_{L^{2p}(\Omega)}
\le C(1+\|x\|_{\cH^1}+\|y\|_{\cH}).
\]
Together with \eqref{eq:Phi-coordinate-bound},
\eqref{eq:phi-coordinate-bound}, and
\eqref{eq:X-variation-lemma}, we first obtain, when
\(b(I)=0\),
\begin{align*}
&\|D_z\Phi_F(\mathbb X_t,\mathbb Y_t)\mathbb X_t^I\|_
 {L^p(\Omega,\cH)}+\|D_z\Phi_\Sigma(\mathbb X_t,\mathbb Y_t)\mathbb X_t^I\|_
 {L^p(\Omega,\mathscr L_2(H_Q,\cH))}+
\|D\phi_\Sigma(\mathbb X_t)\mathbb X_t^I\|_
 {L^p(\Omega,\mathscr L_2(H_Q,\cH))}\\
&\le C(1+\|x\|_{\cH^1}+\|y\|_{\cH})\mathcal D_I
\le C(1+\|x\|_{\cH^1}+\|y\|_{\cH})^{a(I)}\mathcal D_I .
\end{align*}
Here \(a(I)=|I|=M+1\ge2\) at the induction step.

Let \(\pi\in\Pi(I)\) with \(|\pi|\ge2\), and put
\(\widehat p_\pi:=2p|\pi|\). If \(b(I)=0\), H\"older's inequality
and the first-component estimate \eqref{eq:X-variation-lemma} at moment order
\(\widehat p_\pi\) imply
\begin{align*}
&\|D_z^{|\pi|}\Phi_F(\mathbb X_t,\mathbb Y_t)
(\mathbb X_t^B:B\in\pi)\|_{L^p(\Omega,\cH)}\\
&\quad+
\|D_z^{|\pi|}\Phi_\Sigma(\mathbb X_t,\mathbb Y_t)
(\mathbb X_t^B:B\in\pi)\|_
 {L^p(\Omega,\mathscr L_2(H_Q,\cH))}\\
&\quad+
\|D^{|\pi|}\phi_\Sigma(\mathbb X_t)
(\mathbb X_t^B:B\in\pi)\|_
 {L^p(\Omega,\mathscr L_2(H_Q,\cH))}\\
&\le C(1+\|x\|_{\cH^1}+\|y\|_{\cH})
\prod_{B\in\pi}\mathcal D_B
\le C(1+\|x\|_{\cH^1}+\|y\|_{\cH})^{a(I)}\mathcal D_I .
\end{align*}

It remains to treat a term in which \(B_0\in\pi\) supplies the
unique \(\xi\)-direction. If every
\(B\in\pi\setminus\{B_0\}\) is a pure \(x\)-block, H\"older's
inequality, the induction hypothesis at moment order
\(\widehat p_\pi\), and
\eqref{eq:X-variation-lemma} yield
\begin{align*}
&\|D_z^{|\pi|-1}D_\xi\Phi_F(\mathbb X_t,\mathbb Y_t)
\big((\mathbb X_t^B)_{B\in\pi\setminus\{B_0\}};
\mathbb Y_t^{B_0}\big)\|_{L^p(\Omega,\cH)}\\
&\quad+
\|D_z^{|\pi|-1}D_\xi\Phi_\Sigma(\mathbb X_t,\mathbb Y_t)
\big((\mathbb X_t^B)_{B\in\pi\setminus\{B_0\}};
\mathbb Y_t^{B_0}\big)\|_
 {L^p(\Omega,\mathscr L_2(H_Q,\cH))}\\
&\le C(1+\|x\|_{\cH^1}+\|y\|_{\cH})
\|\mathbb Y_t^{B_0}\|_{L^{\widehat p_\pi}(\Omega,\cH)}
\prod_{\substack{B\in\pi,B\ne B_0}}
\|\mathbb X_t^B\|_{L^{\widehat p_\pi}(\Omega,\cH^1)}\\
&\le C(1+\|x\|_{\cH^1}+\|y\|_{\cH})^{a(B_0)+1}\mathcal D_I
\le C(1+\|x\|_{\cH^1}+\|y\|_{\cH})^{a(I)}\mathcal D_I .
\end{align*}
The last inequality holds because \(|\pi|\ge2\), and every block
other than \(B_0\) is a nonempty pure \(x\)-block. Thus
\(a(B_0)+1\le a(I)\). Combining these estimates results in, for
every {\(t\in[0,T]\)},
\begin{align}\label{eq:variation-remainder-bound}
&\|\mathcal R_{F,I}(t)\|_{L^p(\Omega,\cH)}
+\|\mathcal R_{\Sigma,I}(t)\|_
 {L^p(\Omega,\mathscr L_2(H_Q,\cH))}
+\|\widetilde{\mathcal R}_{\Sigma,I}(t)\|_
 {L^p(\Omega,\mathscr L_2(H_Q,\cH))}\\
&\le C(1+\|x\|_{\cH^1}+\|y\|_{\cH})^{a(I)}\mathcal D_I .
\end{align}
Since {\(\mathbb Y_0^I=0\)} for \(|I|\ge2\), Lemma \ref{lem:Z}
applied to \eqref{eq:second-component-equation}, along with
\eqref{eq:variation-remainder-bound}, leads to
\begin{align*}
\E\sup_{t\in[0,T]}\|\mathbb Y_t^I\|_{\cH}^p&\le C{\int_0^T}\E\Big[
\|\mathcal R_{F,I}(t)\|_{\cH}^p
+\|\mathcal R_{\Sigma,I}(t)\|_{\mathscr L_2(H_Q,\cH)}^p
+\|\widetilde{\mathcal R}_{\Sigma,I}(t)\|_
 {\mathscr L_2(H_Q,\cH)}^p\Big]\ud t\\
&\le C(1+\|x\|_{\cH^1}+\|y\|_{\cH})^{pa(I)}\mathcal D_I^p .
\end{align*}
Taking the \(p\)th root proves \eqref{eq:Y-variation-lemma} for
\(|I|=M+1\) and closes the induction.
\end{proof}
\begin{proof}[Proof of Proposition~\ref{prop:varphiY}]
The estimate follows by differentiating \eqref{eq:utx}
and applying the chain rule, H\"older's inequality,
\eqref{eq:generic-flow-moments}, and
Lemma~\ref{lem:flow-variations}.
\end{proof}
%
\section{Proof of Proposition \ref{prop:convergence-rates-sharpness}}
\label{app:sharpness}
\begin{proof} The proof of Proposition \ref{prop:convergence-rates-sharpness} is divided into four steps.


\textbf{Step 1. Verification of the assumptions and \eqref{eq:phipsi}.}
Recall the setting of Example~\ref{ex:sharpness}.
Since $Q=g\otimes g$ is a rank-one covariance operator, the
corresponding $Q$-Wiener processes may be represented as
\[
W_t=g\beta_t,
\qquad
\widetilde W_t=g\widetilde\beta_t,
\]
where $\beta$ and $\widetilde\beta$ are independent standard
one-dimensional Brownian motions.
Since $\alpha>1$ and $\kappa_j\ge1$, we have 
\[
\|g\|_{L^\infty(\mathcal O)}
\le
\sqrt2\bigg(
1+
\sum_{j=1}^{\infty}
\frac{(2\pi\kappa_j)^{1-\alpha}}{j^2}
\bigg)
\le \sqrt2\bigg(1+\sum_{j=1}^{\infty}\frac1{j^2}\bigg)<\infty,
\]
where $g$ is given by \eqref{eq:gdef}.
For the rank-one operator $Q=g\otimes g$, choose an orthonormal
eigenbasis $\{e_i\}_{i\ge1}$ of $Q$ such that
$e_1=g/\|g\|_H$ and $\{e_i\}_{i\ge2}$ is an orthonormal basis of the
orthogonal complement of $\operatorname{span}\{g\}$ in $H$.
Then $Q^{1/2}e_1=g$ and $Q^{1/2}e_i=0$ for $i\ge2$,  which together with $g\in L^\infty(\mathcal{O})$, implies Assumption~\ref{asp:F}.
Since $f\equiv0$ and $\sigma\equiv1$,
their derivatives vanish, so Assumption~\ref{asp:fg3} also holds.
To verify Assumption \ref{asp:alp}, observe that
\begin{align*}
\|g\|_{H^{\alpha-1}}^2
&=
\pi^{2(\alpha-1)}
+
\sum_{j=1}^{\infty}
(2\pi\kappa_j)^{2(\alpha-1)}
\frac{(2\pi\kappa_j)^{2(1-\alpha)}}{j^4}=
\pi^{2(\alpha-1)}
+
\sum_{j=1}^{\infty}j^{-4}
<\infty.
\end{align*}
Again using the rank-one structure of $Q$, we have
\[
\sum_{i=1}^{\infty}
\left\|
\Lambda^{(\alpha-1)/2}Q^{1/2}e_i
\right\|_H^2
=
\|g\|_{H^{\alpha-1}}^2
<\infty.
\]
Hence Assumption~\ref{asp:alp} holds.
For the test functions defined in
Example~\ref{ex:sharpness}, since
$\Lambda^{-1/2}:H^{-1}\to H$ is an isometry, it follows immediately
that \eqref{eq:phipsi} holds.

\textbf{Step 2. Weak-error reduction.}
Since
\(
Y_1^m=m(X_1^m-X_1)
\)
and the particular test function $\varphi$ in \eqref{eq:varphi} is
linear in its second argument,
\begin{align}
&\E[\varphi(X_1,Y_1^m)]
-
\E[\varphi(X_1,Y_1)]
\nonumber\\
&\qquad
=
m\left(
\E[\varphi(X_1,X_1^m)]
-
\E[\varphi(X_1,X_1)]
\right)
-
\E[\varphi(X_1,Y_1)].
\label{eq:sharp-reduction-1}
\end{align}
We first evaluate the last term $\E[\varphi(X_1,Y_1)]$.
In the present setting, the exact solution, the
exponential Euler approximation, and the limiting error process admit
the mild representations
\begin{align}
X_t
&=
\int_0^t
S_{t-s}
\begin{pmatrix}
0\\ g
\end{pmatrix}
\,\ud\beta_s,
\qquad
X_t^m
=
\int_0^t
S_{t-\underline{s}}
\begin{pmatrix}
0\\ g
\end{pmatrix}
\,\ud\beta_s,
\label{eq:rank-one-X-mild}\\
Y_t
&=
\frac12\int_0^t
S_{t-s}
\begin{pmatrix}
g\\0
\end{pmatrix}
\,\ud\beta_s
+
\frac1{\sqrt{12}}
\int_0^t
S_{t-s}
\begin{pmatrix}
g\\0
\end{pmatrix}
\,\ud\widetilde\beta_s.
\label{eq:rank-one-Y-mild}
\end{align}
Using the explicit representation \eqref{eq:St} of the wave group
$S_t$, the relevant components at $t=1$ are
\begin{align}
\left\langle[X_1]_1,\eta_1\right\rangle_H
&=
\frac1\pi
\int_0^1
\sin\bigl(\pi(1-s)\bigr)\,\ud\beta_s,
\label{eq:X1eta}\\
\left\langle
\Lambda^{-1/2}[Y_1]_2,\rho
\right\rangle_H
&=
-\frac12
\sum_{i=1}^{\infty}
\frac{(2\pi\kappa_i)^{1-\alpha}}{i^3}
\int_0^1
\sin\bigl(2\pi\kappa_i(1-s)\bigr)\,\ud\beta_s
\nonumber\\
&\quad
-\frac1{\sqrt{12}}
\sum_{i=1}^{\infty}
\frac{(2\pi\kappa_i)^{1-\alpha}}{i^3}
\int_0^1
\sin\bigl(2\pi\kappa_i(1-s)\bigr)\,
\ud\widetilde\beta_s.
\end{align}
It\^o's isometry, the independence of $\beta$ and $\widetilde\beta$,
and the change of variables $t=1-s$ yield
\begin{align}
\E[\varphi(X_1,Y_1)]
&=
-\frac1{2\pi}
\sum_{i=1}^{\infty}
\frac{(2\pi\kappa_i)^{1-\alpha}}{i^3}
\int_0^1
\sin(\pi t)\sin(2\pi\kappa_i t)\,\ud t=0,
\label{eq:sharp-lim-cov-zero}
\end{align}
where the last equality follows from the orthogonality of
$\sin(\pi \cdot)$ and $\sin(2\pi\kappa_i \cdot)$ in $L^2(0,1)$.

We next compare
$\E[\varphi(X_1,X_1^m)]$ and
$\E[\varphi(X_1^m,X_1^m)]$.
From \eqref{eq:rank-one-X-mild}, we have
\begin{equation}
\label{eq:X1mse}
B_m
:=
\left\langle
\Lambda^{-1/2}[X_1^m]_2,\rho
\right\rangle_H
=
\sum_{n=0}^{m-1}
\sum_{i=1}^{\infty}
\int_{t_n}^{t_{n+1}}
b_i\cos(2\pi\kappa_i t_n)\,\ud\beta_t,
\quad
b_i:=\frac{1}{i^3(2\pi\kappa_i)^\alpha}.
\end{equation}
Combining this representation with \eqref{eq:X1eta} and
\eqref{eq:varphi}, we obtain
\begin{equation}
\label{eq:sharp-mixed-cov}
\E[\varphi(X_1,X_1^m)]
=
\frac1\pi
\sum_{n=0}^{m-1}
\sum_{i=1}^{\infty}
b_i
\int_{t_n}^{t_{n+1}}
\cos(2\pi\kappa_i t_n)
\sin(\pi t)\,\ud t.
\end{equation}
Similarly, replacing $\sin(\pi t)$ in \eqref{eq:sharp-mixed-cov} by
$\sin(\pi\ttau)$ gives
\begin{equation}
\label{eq:varphix1x1}
\Gamma_m:=\E[\varphi(X_1^m,X_1^m)]
=
\frac{1}{m\pi}
\sum_{n=0}^{m-1}
\sin(\pi t_n)
\sum_{i=1}^{\infty}
b_i\cos(2\pi\kappa_i t_n).
\end{equation}
For every $n$,
\begin{align*}
\int_{t_n}^{t_{n+1}}\sin(\pi t)\,\ud t
=
\frac{\sin(\pi/m)}{\pi}\sin(\pi t_n)
+
\frac{1-\cos(\pi/m)}{\pi}\cos(\pi t_n).
\end{align*}
Using the identity
$
\sum_{n=0}^{m-1}
\cos\left(\frac{\pi n}{m}\right)
\cos\left(\frac{2\pi\kappa_i n}{m}\right)
=1,
$
we therefore obtain
\begin{align*}
\E[\varphi(X_1,X_1^m)]
&=
\frac{m\sin(\pi/m)}{\pi}
\E[\varphi(X_1^m,X_1^m)]
+
\frac{1-\cos(\pi/m)}{\pi^2}
\langle\Lambda^{-1/2}g,\rho\rangle_H.
\end{align*}
Since $\E[\varphi(X_1^m,X_1^m)]$ is uniformly bounded in $m$ and
$
|
\frac{m\sin(\pi/m)}{\pi}-1
|
+
|1-\cos(\pi/m)|
\le Cm^{-2},
$
\begin{equation}
\label{eq:sharp-mixed-small}
\left|
\E[\varphi(X_1,X_1^m)]
-
\E[\varphi(X_1^m,X_1^m)]
\right|
\le Cm^{-2}.
\end{equation}
Combining
\eqref{eq:sharp-reduction-1},
\eqref{eq:sharp-lim-cov-zero}, and
\eqref{eq:sharp-mixed-small}, we arrive at
\begin{gather}
\label{eq:sharp-main-reduction}
\E[\varphi(X_1,Y_1^m)]
-
\E[\varphi(X_1,Y_1)]
=
m(\Gamma_m-\Gamma_\infty)
+
O(m^{-1}),
\end{gather}
where $\Gamma_\infty
:=
\E[\varphi(X_1,X_1)]$.
For the second component $[X_1]_2$ of the exact solution $X_1$, we have
\begin{equation}\label{eq:X1mseinf}
B_\infty:=\left\langle
\Lambda^{-1/2}[X_1]_2,\rho
\right\rangle_H
=
\int_0^1
\sum_{i=1}^{\infty}
b_i\cos(2\pi\kappa_i t)\,\ud\beta_t,
\end{equation}
which, in combination with \eqref{eq:X1eta}, leads to
\begin{equation*}
\Gamma_\infty
=
\frac1\pi
\sum_{i=1}^{\infty}
b_i
\int_0^1
\sin(\pi t)\cos(2\pi\kappa_i t)\,\ud t=
-\frac1\pi
\sum_{i=1}^{\infty}
b_i
\frac{2}{\pi(4\kappa_i^2-1)}.
\end{equation*}
Comparing this identity with \eqref{eq:varphix1x1}, we define
\begin{equation}
\label{eq:Rmk}
R_m(k)
:=
\frac1m
\sum_{n=0}^{m-1}
\sin\left(\frac{\pi n}{m}\right)
\cos\left(\frac{2\pi kn}{m}\right)
+
\frac{2}{\pi(4k^2-1)},
\qquad k\ge1.
\end{equation}
Then the difference $\Gamma_m-\Gamma_\infty$ can be rewritten as
\begin{equation}
\label{eq:sharp-kernel-representation}
\Gamma_m-\Gamma_\infty
=
\frac1\pi
\sum_{i=1}^{\infty}
b_iR_m(\kappa_i).
\end{equation}

\textbf{Step 3. Proof of \eqref{eq:shp1}.}
Fix $j\ge2$ and consider
$
m\in\{\kappa_j,2\kappa_j\}.
$
We decompose \eqref{eq:sharp-kernel-representation} into the resonant
mode $i=j$, the lower-frequency modes $i<j$, and the higher-frequency
modes $i>j$.

\textbf{(Resonant mode)} For the resonant mode $i=j$, \eqref{eq:Rmk} implies
\begin{align*}
R_{\kappa_j}(\kappa_j)
&=\frac1{\kappa_j}
\cot\left(\frac{\pi}{2\kappa_j}\right)
+\frac{2}{\pi(4\kappa_j^2-1)},\\
R_{2\kappa_j}(\kappa_j)
&=-\frac1{2\kappa_j}
\tan\left(\frac{\pi}{4\kappa_j}\right)
+\frac{2}{\pi(4\kappa_j^2-1)}.
\end{align*}
Using
$
0
\le
\frac1{\sin^2\theta}-\frac1{\theta^2}
\le1$ for
$
0<\theta<\frac{\pi}{2},
$
we obtain
\(
0
\le
\frac2\pi
-
\frac1{\kappa_j}
\cot\left(\frac{\pi}{2\kappa_j}\right)
\le
\frac{\pi}{2\kappa_j^2}.
\)
It then follows that
\begin{equation}
\label{eq:sharp-resonance}
R_{\kappa_j}(\kappa_j)
=
\frac2\pi+O(\kappa_j^{-2}),
\qquad
R_{2\kappa_j}(\kappa_j)
=
O(\kappa_j^{-2}).
\end{equation}
Since
$b_j=j^{-3}(2\pi\kappa_j)^{-\alpha}$, the contribution of the
resonant mode to \eqref{eq:sharp-kernel-representation} with
$
m\in\{\kappa_j,2\kappa_j\}
$
is
\begin{equation}
\frac1\pi b_jR_{\kappa_j}(\kappa_j)
=
\frac{2}{\pi^2(2\pi)^\alpha}
\frac{1}{j^3\kappa_j^\alpha}
+
O\Big(
\frac{1}{j^3\kappa_j^{\alpha+2}}
\Big),\quad
\frac1\pi b_jR_{2\kappa_j}(\kappa_j)
=
O\Big(
\frac{1}{j^3\kappa_j^{\alpha+2}}
\Big).
\label{eq:sharp-resonant-main}
\end{equation}

\textbf{(Lower-frequency modes)}
We estimate the lower-frequency modes $i<j$.
Note that
\[
\frac1m
\sum_{n=0}^{m-1}
\sin\left(\frac{\pi n}{m}\right)
\cos\left(\frac{2\pi\kappa_i n}{m}\right)
=
\frac1{2m}
\left(
\cot\theta_i^+
-
\cot\theta_i^-
\right), \qquad \theta_i^\pm
:=
\frac{(2\kappa_i\pm1)\pi}{2m}.
\]
Since
$
\frac{\kappa_j}{\kappa_{j-1}}
=
2^{2j-1}
\ge8
$
and $m\in\{\kappa_j,2\kappa_j\}$,
\[
2\kappa_i
\le
2\kappa_{j-1}
\le
\frac{\kappa_j}{4}
\le
\frac m4,
\qquad i<j.
\]
This implies
$
0<\theta_i^-<\theta_i^+<\frac{\pi}{2}.
$
Using \eqref{eq:Rmk}, we obtain
\begin{align*}
R_m(\kappa_i)
&=
\frac1{2m}
\left[
\Big(
\cot\theta_i^+-\frac1{\theta_i^+}
\Big)
-
\Big(
\cot\theta_i^--\frac1{\theta_i^-}
\Big)
\right]=-\frac1{2m}
\int_{\theta_i^-}^{\theta_i^+}
\Big(
\frac1{\sin^2x}-\frac1{x^2}
\Big)\,\ud x.
\end{align*}
By the fact that $\sin x\le x\le\tan x$ for $0<x<\pi/2$, we have
\(
0
\le
\frac1{\sin^2x}-\frac1{x^2}
\le1.
\)
Together with
$\theta_i^+-\theta_i^-=\pi/m$, this gives
$
|R_m(\kappa_i)|
\le
\frac{\pi}{2m^2}$ for
$i<j.
$
Moreover,
\(
\sum_{i=1}^{\infty}b_i
\le
(2\pi)^{-\alpha}
\sum_{i=1}^{\infty}i^{-3}
<\infty.
\)
Using the positivity of $b_i$, there exists $C>0$ independent of $j$ and $m$ such that
\begin{equation}
\label{eq:sharp-low-modes}
\bigg|
\frac1\pi
\sum_{i=1}^{j-1}
b_iR_m(\kappa_i)
\bigg|
\le
Cm^{-2}
\le
C\kappa_j^{-2}.
\end{equation}

\textbf{(Higher-frequency modes)}
It remains to estimate the higher-frequency modes $i>j$.
From \eqref{eq:Rmk},
$
|R_m(\kappa_i)|
\le
1+\frac{2}{3\pi}
<2.
$
Since
\[
\frac{b_{i+1}}{b_i}
=
\left(\frac{i}{i+1}\right)^3
2^{-\alpha(2i+1)}
\le
2^{-3\alpha}
<1,
\]
the tail is bounded by a geometric series as follows:
\begin{align}
s_j
:=
\sum_{i=j+1}^{\infty}b_i
&\le
b_{j+1}
\sum_{\ell=0}^{\infty}2^{-3\alpha\ell}
=
\frac{1}
{(1-2^{-3\alpha})(j+1)^3(2\pi\kappa_{j+1})^\alpha}
\le
C
\frac{2^{-\alpha(2j+1)}}
{j^3\kappa_j^\alpha},
\label{eq:sharp-high-modessj}
\end{align}
where we have used
$\kappa_{j+1}=2^{2j+1}\kappa_j$.
Hence, there exists $C>0$ independent of $j$ and $m$ such that
\begin{equation}
\label{eq:sharp-high-modes}
\bigg|
\frac1\pi
\sum_{i=j+1}^{\infty}
b_iR_m(\kappa_i)
\bigg|
\le
C
\frac{2^{-\alpha(2j+1)}}
{j^3\kappa_j^\alpha}.
\end{equation}

Combining
\eqref{eq:sharp-resonant-main},
\eqref{eq:sharp-low-modes}, and
\eqref{eq:sharp-high-modes}
with \eqref{eq:sharp-kernel-representation}, we obtain
\begin{align}
\Gamma_{\kappa_j}-\Gamma_\infty
=
\frac{2}{\pi^2(2\pi)^\alpha}
\frac{1}{j^3\kappa_j^\alpha}
+r_j,
\qquad
\Gamma_{2\kappa_j}-\Gamma_\infty
=
\widetilde r_j,
\label{eq:sharp-C2kappa}
\end{align}
where the remainders $r_j$ and $\widetilde r_j$ satisfy
\[
|r_j|+|\widetilde r_j|
\le
C\bigg(
\kappa_j^{-2}
+
\frac{2^{-\alpha(2j+1)}}
{j^3\kappa_j^\alpha}
\bigg)
=
o\bigg(
\frac{1}{j^3\kappa_j^\alpha}
\bigg).
\]
Here we used $\alpha<2$ and the super-exponential growth of
$\kappa_j$.
For $m=\kappa_j$, the leading term in
\eqref{eq:sharp-main-reduction} is of order
$j^{-3}\kappa_j^{-(\alpha-1)}$, whereas the remainder is of order
$\kappa_j^{-1}$. Since $\alpha<2$ and
$\kappa_j=2^{j^2-1}$,
\[
\frac{\kappa_j^{-1}}
{j^{-3}\kappa_j^{-(\alpha-1)}}
=
j^3\kappa_j^{\alpha-2}
\rightarrow0,\quad \text{as }j\to\infty.
\]
Thus, the $O(\kappa_j^{-1})$ remainder is negligible compared with
the leading term.
Applying \eqref{eq:sharp-main-reduction} with $m=\kappa_j$ gives
\[
\E[\varphi(X_1,Y_1^{\kappa_j})]
-
\E[\varphi(X_1,Y_1)]
=
\frac{2}{\pi^2(2\pi)^\alpha}
\frac{1+o(1)}
{j^3\kappa_j^{\alpha-1}}.
\]
Hence, for every $\varepsilon>0$,
\begin{align*}
\kappa_j^{\alpha-1+\varepsilon}
\left|
\E[\varphi(X_1,Y_1^{\kappa_j})]
-
\E[\varphi(X_1,Y_1)]
\right|
\sim
\frac{2}{\pi^2(2\pi)^\alpha}
\frac{\kappa_j^\varepsilon}{j^3}\rightarrow\infty,
\qquad j\to\infty.
\end{align*}
This proves \eqref{eq:shp1}.

\textbf{Step 4. Sharpness of the Richardson rate.}
Continue to take $j\ge2$ and
$m\in\{\kappa_j,2\kappa_j\}$; in particular, $m\ge2$.
In accordance with \eqref{eq:psidef}, define
\begin{align}
Z_m
&:=
\langle[X_1^m]_1,\eta_1\rangle_H
+
\langle
\Lambda^{-1/2}[X_1^m]_2,\rho
\rangle_H,
\nonumber\\
Z_\infty
&:=
\langle[X_1]_1,\eta_1\rangle_H
+
\langle
\Lambda^{-1/2}[X_1]_2,\rho
\rangle_H.
\label{eq:ZmZ}
\end{align}
Both $Z_m$ and $Z_\infty$ are centered real-valued Gaussian random
variables. Therefore,
\begin{equation}
\label{eq:sharp-gaussian}
\E[\psi(X_1^m)]
=
\exp\left(
-\frac12\E[Z_m^2]
\right),
\qquad
\E[\psi(X_1)]
=
\exp\left(
-\frac12\E[Z_\infty^2]
\right).
\end{equation}

We first compare $\E[Z_m^2]$ with $\E[Z_\infty^2]$.
For $m\in\{\kappa_j,2\kappa_j\}$, the first component $[X_1^m]_1$ of the numerical
solution $X_1^m$ satisfies
\(
\langle[X_1^m]_1,\eta_1\rangle_H
=
\frac1\pi
\sum_{n=0}^{m-1}
\sin(\pi t_n)
(
\beta_{t_{n+1}}-\beta_{t_n}
).
\)
By It\^o's isometry,
\begin{align*}
\E\left[
\left|
\left\langle[X_1^m]_1,\eta_1\right\rangle_H
\right|^2
\right]
&=
\frac{1}{m\pi^2}
\sum_{n=0}^{m-1}
\sin^2\left(\frac{\pi n}{m}\right)=
\frac{1}{2\pi^2}.
\end{align*}
On the other hand, \eqref{eq:X1eta} gives
\begin{equation}\label{eq:sharp-first-variance}
\E\left[
\left|
\left\langle[X_1]_1,\eta_1\right\rangle_H
\right|^2
\right]
=
\frac1{\pi^2}
\int_0^1\sin^2(\pi t)\,\ud t
=
\frac1{2\pi^2}=\E\left[
\left|
\left\langle[X_1^m]_1,\eta_1\right\rangle_H
\right|^2
\right].
\end{equation}

By \eqref{eq:ZmZ}, the definitions of $\Gamma_m$ and $\Gamma_\infty$, and
\eqref{eq:sharp-first-variance}, we have
\begin{equation}
\label{eq:sharp-total-variance}
\E[Z_m^2]-\E[Z_\infty^2]
=
2(\Gamma_m-\Gamma_\infty)
+
\E[B_m^2]-\E[B_\infty^2],
\qquad
m\in\{\kappa_j,2\kappa_j\},
\end{equation}
where $B_m$ and $B_\infty$ are defined in \eqref{eq:X1mse} and
\eqref{eq:X1mseinf}, respectively.
Since the frequencies $\kappa_i$ are distinct integers, It\^o's
isometry and orthogonality in $L^2(0,1)$ imply
\begin{equation}
\label{eq:sharp-B-exact-var}
\E[B_\infty^2]
=
\frac12
\sum_{i=1}^{\infty}b_i^2.
\end{equation}

For $i>j$, the ratio $\kappa_i/\kappa_j$ is an even integer, and hence
\[
\cos\left(
\frac{2\pi\kappa_i n}{\kappa_j}
\right)
=
1,
\qquad
\cos\left(
\frac{2\pi\kappa_i n}{2\kappa_j}
\right)
=
1,
\qquad i>j.
\]
For the resonant mode $i=j$,
\[
\cos\left(
\frac{2\pi\kappa_j n}{\kappa_j}
\right)
=
1,
\qquad
\cos\left(
\frac{2\pi\kappa_j n}{2\kappa_j}
\right)
=
(-1)^n.
\]

For $i<j$, the discrete cosine modes are mutually orthogonal and are
also orthogonal to the constant mode and, when $m=2\kappa_j$, to
$(-1)^n$. Therefore, recalling the definition of $s_j$ in
\eqref{eq:sharp-high-modessj}, it follows from \eqref{eq:X1mse} that
\begin{align*}
\E[B_{\kappa_j}^2]
&=
\frac12
\sum_{i=1}^{j-1}b_i^2
+
(b_j+s_j)^2,\qquad
\E[B_{2\kappa_j}^2]
=
\frac12
\sum_{i=1}^{j-1}b_i^2
+
b_j^2+s_j^2.
\end{align*}

Combining these identities with \eqref{eq:sharp-B-exact-var}, we
obtain
\begin{align}
\E[B_{\kappa_j}^2]-\E[B_\infty^2]
&=
\frac12b_j^2
+
2b_js_j
+
s_j^2
-
\frac12
\sum_{i=j+1}^{\infty}b_i^2,
\label{eq:sharp-B-defect-kappa}\\
\E[B_{2\kappa_j}^2]-\E[B_\infty^2]
&=
\frac12b_j^2
+
s_j^2
-
\frac12
\sum_{i=j+1}^{\infty}b_i^2.
\label{eq:sharp-B-defect-2kappa}
\end{align}
The tail estimate \eqref{eq:sharp-high-modessj} gives
\(
s_j
\le
C
\frac{2^{-\alpha(2j+1)}}
{j^3\kappa_j^\alpha}
=
o(b_j).
\)
Therefore,
\begin{equation}
\label{eq:sharp-B-defect}
\left|
\E[B_{\kappa_j}^2]-\E[B_\infty^2]
\right|
+
\left|
\E[B_{2\kappa_j}^2]-\E[B_\infty^2]
\right|
\le
Cb_j^2
=
\frac{C}{j^6\kappa_j^{2\alpha}}.
\end{equation}
Combining \eqref{eq:sharp-gaussian} and
\eqref{eq:sharp-total-variance}, we obtain that for
$m\in\{\kappa_j,2\kappa_j\}$,
\begin{align}
\E[\psi(X_1^m)]
&=
\E[\psi(X_1)]
\exp\Big(
-(\Gamma_m-\Gamma_\infty)
-\frac12
\bigl(
\E[B_m^2]-\E[B_\infty^2]
\bigr)
\Big).
\label{eq:sharp-gaussian-ratio}
\end{align}

By \eqref{eq:sharp-C2kappa}, for all sufficiently large $j$,
\[
|\Gamma_m-\Gamma_\infty|
\le
\frac{C}{j^3\kappa_j^\alpha},
\qquad
m\in\{\kappa_j,2\kappa_j\}.
\]
Hence \eqref{eq:sharp-B-defect} and the expansion
$e^{-x}=1-x+O(x^2)$ imply
\begin{equation}
\label{eq:sharp-weak-linearization}
\E[\psi(X_1^m)]
-
\E[\psi(X_1)]
=
-\E[\psi(X_1)]
(\Gamma_m-\Gamma_\infty)
+
O\Big(
\frac{1}{j^6\kappa_j^{2\alpha}}
\Big),
\end{equation}
uniformly for
$m\in\{\kappa_j,2\kappa_j\}$.
Applying \eqref{eq:sharp-weak-linearization} with
$m=\kappa_j$ and $m=2\kappa_j$ gives
\begin{align*}
&2\E[\psi(X_1^{2\kappa_j})]
-
\E[\psi(X_1^{\kappa_j})]
-
\E[\psi(X_1)]
\\
&\qquad
=
-2\E[\psi(X_1)]
(\Gamma_{2\kappa_j}-\Gamma_\infty)
+
\E[\psi(X_1)]
(\Gamma_{\kappa_j}-\Gamma_\infty)
+
O\left(
\frac{1}{j^6\kappa_j^{2\alpha}}
\right).
\end{align*}
Using \eqref{eq:sharp-C2kappa}, together with
\(
\frac{1}{j^6\kappa_j^{2\alpha}}
=
o\big(
\frac{1}{j^3\kappa_j^\alpha}
\big),
\)
we conclude that
\begin{equation}
\label{eq:sharp-richardson-asymptotic}
2\E[\psi(X_1^{2\kappa_j})]
-
\E[\psi(X_1^{\kappa_j})]
-
\E[\psi(X_1)]
=
\frac{2\E[\psi(X_1)]}{\pi^2(2\pi)^\alpha}
\frac{1+o(1)}
{j^3\kappa_j^\alpha}.
\end{equation}
Since
\(
\E[\psi(X_1)]
=
\exp(
-\frac12\E[Z_\infty^2]
)
>0,
\)
there exists $j_0\in\mathbb N$ such that
\begin{equation*}
\left|
2\E[\psi(X_1^{2\kappa_j})]
-
\E[\psi(X_1^{\kappa_j})]
-
\E[\psi(X_1)]
\right|
\ge
\frac{\E[\psi(X_1)]}{\pi^2(2\pi)^\alpha}
\frac{1}{j^3\kappa_j^\alpha}
\end{equation*}
for every $j\ge j_0$.
Because $\kappa_j=2^{j^2-1}$, for every $\varepsilon>0$,
\begin{align*}
&\kappa_j^{\alpha+\varepsilon}
\left|
2\E[\psi(X_1^{2\kappa_j})]
-
\E[\psi(X_1^{\kappa_j})]
-
\E[\psi(X_1)]
\right|
\ge
c_\alpha
\frac{\kappa_j^\varepsilon}{j^3}
\rightarrow
\infty,
\qquad j\to\infty.
\end{align*}
This proves \eqref{eq:shp2} and completes the proof.
\end{proof}
\bibliographystyle{abbrv}
\bibliography{references}
\end{document}